\numberwithin{equation}{section}
\newtheorem{theorem}{Theorem}[section]
\newtheorem{lemma}[theorem]{Lemma}
\newtheorem{cor}[theorem]{Corollary}
\newenvironment{proof}[1][Proof]{\textbf{#1.} }
\date{}
\begin{document}

\setcounter{page}{1}

\title{A New Characterization of Sporadic Groups \thanks{This work was supported by the National Natural Science Foundation of China
		(Grant No.11671324, 11971391), Fundamental Research Funds for the Central Universities (No. XDJK2019B030).}}

%\footnotetext

\author
{ \small $\mbox{Zhongbi Wang}$, $\mbox{Heng Lv}$, $\mbox{Yanxiong Yan},$ $\mbox{Guiyun Chen}$\thanks{corresponding author}      \\
\small  School of Mathematics and Statistics, Southwest University,\\
\small Beibei, Chongqing, 400715,  China\\
\small  E-mail: zbwango1@163.com; lvh529@swu.edu.cn;  2003yyx@163.com;  gychen@swu.edu.cn}

\maketitle
\begin{quote}

{\small {\bf Abstract: Let $G$ be a finite group,  $n$ a positive integer.  $\pi(n)$ denotes  the set of all prime divisors of $n$ and $\pi(G)=\pi(|G|)$. The prime graph $\Gamma(G)$ of $G$, defined by Grenberg and Kegel,  is a graph whose vertex set is $\pi(G)$, two vertices $p,\ q$ in $\pi(G)$ joined by an edge if and only if $G$ contains an element of order $pq$. In this article, a new characterization of sporadic simple groups is obtained, that is, if $G$ is  a finite group and  $S$ a sporadic simple group. Then $G\cong S$ if and only if $|G|=|S|$ and  $\Gamma(G)$ is disconnected. This characterization unifies the several characterizations that can conclude the group has non-connected prime graphs, hence several known characterizations of sporadic simple groups become the corollaries of this new characterization.
}\\

{\small{\bf Keywords : sporadic groups; order; prime graph; characterization }}\\
{\small{\bf Mathematics Subject Classification (2020): 20D08}}}\\
\end{quote}

\baselineskip 22pt
\section{Introduction}

 ~~~~~~~In the past three decades, as a very interesting topic, quantitative characterization of a group, especially a simple group, has been being an active topic in the theory of finite simple group since classification of finite simple groups completed  in the early of 1980s. When Wujie Shi began to investigate the topic whether a finite simple group can be uniquely determined by its order and the set of its element orders, he proposed a famous conjecture in 1987, which was recorded as Problem 12.39 in \cite{Unsolved}.

\textbf{Shi's Conjecture.} Let $G$ be a finite group, $S$ a finite simple group, then $G\cong S$ if and only if $|G|=|S|$ and $\pi_e(G)=\pi_e(S)$, where $\pi_e(G)$ denotes the
set of element orders in $G$.

Research on Shi's conjecture began an era of quantitative  characterization of finite simple groups. At last, this  conjecture was completely proved in 2009. In the series of papers to prove Shi's conjecture, an important concept the prime graph of a finite group was frequently used for dealing with those simple groups with non-connected prime graph, which was defined  by Gruenberg and Kegel in \cite{Williams} as following:

  Let $G$ be a finite group,  $n$ a positive integer.  $\pi(n)$ denotes  the set of all prime divisors of $n$ and $\pi(G)=\pi(|G|)$. The prime graph $\Gamma(G)$ of $G$, defined by Grenberg and Kegel,  is a graph whose vertex set is $\pi(G)$, two vertices $p,\ q$ in $\pi(G)$ joined by an edge if and only if $G$ contains an element of order $pq$. We denote the number of connected components of $\Gamma(G)$ as $t(G)$, the connected components of $\Gamma(G)$ as $\{\pi_i,\ i=1,\cdots,\ t(G)\}$, and we always assume $2$ is in $\pi_1(G)$ if $2\big||G|$. The components of prime graphs of all finite simple groups were given by J. S. William, A. S. Kondrat$\acute{e}$v, M. Suzuki, N. Iiyora and H. Yamaki etc. (see [2-5]).

The prime graph once be used by the second author to study the famous Thompson's conjecture:

\textbf{Thompson's Conjecture.} Let $G$ be a finite group with $Z(G)=1$, $N(G)=\{n\in \mathbf{N}| G \mbox{ has a conjugacy } $ $\mbox{ class of length } n\}.$ If $M$ is  a finite simple group such that $N(G)=N(M)$, then $G\cong M.$

During the second author studying Thompson's conjecture, he proved that $|G|=|M|$ if $G$ and $M$ satisfy conditions of Thompson's Conjecture and the prime graph $\Gamma(M)$ is non-connected. For a finite group with non-connected prime graph, he divided its order into co-prime divisors, each of them exactly corresponding to each of components of the prime graph, and called these divisors the order components of the finite group and found that many finite simple groups can  be uniquely determined by  their order components. Actually, many simple groups with non-connected prime graphs have been proved to be uniquely determined by their order components, for example, in a series of articles , for example [6-11], etc,  it is shown  many simple groups with non-connected prime graphs are characterized by order components of their prime graphs. There are some other topics on characterization of a finite simple group by its order and some other quantitative properties related to non-connected prime graph, for example, in [12-16], characterization of a finite simple group by its order and maximal element order (the largest element order or the second largest element order, or both of them), characterization of a finite simple group by its order and the set of orders of maximal abelian subgroups, etc. In these topics, the discussed finite group are usually ascribed to a finite group having non-connected prime graph, whose order is the same as some finite simple group. Therefore, it is a meaningful topic to study the finite group having its prime graph non-connected and its order being equal to a finite simple group. In this article, we shall discuss this topic and specially focus on a finite group having its prime graph non-connected and its order being equal to a sporadic group. we shall prove the following theorem:

\textbf{Main Theorem}. \textit{ Let $G$ be a finite group and  $S$ a sporadic simple group. Then $G\cong S$ if and only if $|G|=|S|$ and the prime graph of $G$ is disconnected.}

By above theorem, the following known characterizations of sporadic simple groups， including Shi' Conjecture and Thompson Conjecture, become its corollaries since under the hypothesis the prime graphs of the groups are non-connected, for example:

\begin{cor}
	Let $G$ be a finite group and  $S$ a sporadic simple group.   Then $G\cong S$ if and only if $|G|=|S|$ and $\pi_e(G)=\pi_e(S)$.
\end{cor}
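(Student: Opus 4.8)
The plan is to deduce this corollary directly from the Main Theorem, the key observation being that the hypothesis $\pi_e(G)=\pi_e(S)$ already determines the prime graph $\Gamma(G)$ completely. The forward implication requires nothing: if $G\cong S$, then certainly $|G|=|S|$, and the two isomorphic groups have identical sets of element orders, so $\pi_e(G)=\pi_e(S)$. All of the content lies in the reverse implication, which I would reduce to the Main Theorem rather than prove afresh.

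For the reverse direction, suppose $|G|=|S|$ and $\pi_e(G)=\pi_e(S)$. First I would note that $|G|=|S|$ forces $\pi(G)=\pi(|G|)=\pi(|S|)=\pi(S)$, so $\Gamma(G)$ and $\Gamma(S)$ share the same vertex set. Next, recall that by the very definition of the prime graph, two vertices $p,q$ are joined by an edge in $\Gamma(G)$ precisely when $pq\in\pi_e(G)$; since $\pi_e(G)=\pi_e(S)$, exactly the same pairs are adjacent in $\Gamma(S)$. Hence $\Gamma(G)=\Gamma(S)$ as graphs, and in particular they have the same number of connected components.

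The single external fact I would invoke is that every sporadic simple group $S$ has a disconnected prime graph, that is $t(S)\geq 2$; this is part of the classification of the prime-graph components of all finite simple groups recorded in the references cited in the Introduction (see [2--5]). It follows that $\Gamma(S)$, and therefore the identical graph $\Gamma(G)$, is disconnected. With $|G|=|S|$ and $\Gamma(G)$ disconnected both established, the Main Theorem immediately gives $G\cong S$, completing the argument.

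I expect essentially no genuine obstacle here, since the statement is designed to be a corollary: the entire difficulty has been absorbed into the Main Theorem. The only input beyond that theorem is the disconnectedness of $\Gamma(S)$ for each of the $26$ sporadic groups, which is a finite and routine verification available in the tables of [2--5]; in particular, one must make sure that \emph{every} sporadic group (up to and including the Monster, where the isolated vertices are the large primes such as $41,59,71$) genuinely has $t(S)\geq 2$, so that the hypothesis of the Main Theorem is met in every case.
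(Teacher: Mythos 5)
Your proposal is correct and follows exactly the route the paper intends: the paper justifies this corollary with the one-line remark that under the hypothesis the prime graph of $G$ is non-connected (since $\pi_e(G)=\pi_e(S)$ determines $\Gamma(G)=\Gamma(S)$, and every sporadic group has $t(S)\geq 2$ by the component tables in [2--5]), after which the Main Theorem applies. Your write-up simply makes that same reduction explicit, so there is no gap and no divergence from the paper's argument.
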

\begin{cor}
Let $G$ be a finite group and  $S$ a sporadic simple group.   Then $G\cong S$ if and only if $Z(G)=1$ and $N(G)=N(S)$.
\end{cor}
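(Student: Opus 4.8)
The plan is to deduce the corollary from the Main Theorem by showing that the hypotheses $Z(G)=1$ and $N(G)=N(S)$ force precisely the hypotheses of the Main Theorem, namely $|G|=|S|$ together with $\Gamma(G)$ disconnected. The forward implication is immediate: if $G\cong S$ then, since every sporadic simple group is nonabelian simple, $Z(G)=Z(S)=1$, while the set of conjugacy class sizes is an isomorphism invariant, so $N(G)=N(S)$. Hence all the work lies in the converse, and I would arrange matters so that the Main Theorem supplies the final step.

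Assume then $Z(G)=1$ and $N(G)=N(S)$. First I would record the known fact, from the classification of the prime-graph components of the finite simple groups (see [2--5]), that every sporadic simple group $S$ has $t(S)\ge 2$, i.e. $\Gamma(S)$ is disconnected. This places the pair $(G,S)$ under the hypotheses of the second author's theorem quoted in the Introduction: for a finite group $G$ with $Z(G)=1$ and a group $M$ with disconnected prime graph, $N(G)=N(M)$ implies $|G|=|M|$. Taking $M=S$ gives $|G|=|S|$, and in particular $\pi(G)=\pi(S)$. What remains, and what I expect to be the main obstacle, is to show that $\Gamma(G)$ is itself disconnected.

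The tempting shortcut fails: one might exhibit a class size $n\in N(S)$ equal to the $\pi_i'$-part of $|S|$ for an isolated component $\pi_i$, transport it to $G$ via $N(G)=N(S)$, and read off that the matching element of $G$ has a centralizer that is a $\pi_i$-group; but this alone does not prove the relevant prime isolated in $\Gamma(G)$, since a group may possess an element whose centralizer is a full Sylow subgroup while the corresponding prime is still joined to others (for instance an involution in $A_4\times C_2$ has centralizer equal to a Sylow $2$-subgroup, yet $2$ is not isolated there). The correct route is the order-component analysis underlying the quoted theorem: using the \emph{whole} set $N(G)=N(S)$, the pairwise coprimality of the order components of $S$, and the structural restriction that any element lying in an isolated component has a centralizer supported on that component, one reconstructs a factorization of $|G|$ into pairwise coprime factors matching the order components of $S$. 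This simultaneously recovers $|G|=|S|$ and shows that the primes of distinct factors admit no edges between them in $\Gamma(G)$, so $\Gamma(G)$ is disconnected; the Main Theorem then yields $G\cong S$, completing the converse. The delicate point I would spell out in full is this reconstruction: one must verify that for each isolated component of $S$ the matching class size in $G$ genuinely forces a centralizer that is a Hall subgroup for that set of primes, so that the reconstructed factors are truly coprime and the induced non-edges in $\Gamma(G)$ persist.
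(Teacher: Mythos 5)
Your proposal is correct and takes essentially the same route as the paper, which disposes of this corollary in one line by invoking the second author's theorem on Thompson's conjecture for groups with disconnected prime graph (every sporadic $S$ has $t(S)\geq 2$) and then applying the Main Theorem. Your one refinement is in fact the point the paper leaves implicit: the bare statement ``$|G|=|M|$'' quoted in the introduction does not by itself yield that $\Gamma(G)$ is disconnected, and as you note (with a correct counterexample to the naive shortcut) one must use the full strength of that earlier work, namely that $Z(G)=1$ and $N(G)=N(S)$ force $G$ and $S$ to have the same order components, which gives both $|G|=|S|$ and the disconnectedness of $\Gamma(G)$ needed for the Main Theorem.
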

\begin{cor}
	Let $G$ be a finite group and  $S$ a sporadic simple group.   Then $G\cong S$ if and only if $G$ and $S$ have the same order components.
\end{cor}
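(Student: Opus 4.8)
The plan is to treat the two implications separately, with the forward one immediate and all the weight falling on the converse. For the ``only if'' direction, $G\cong S$ gives $|G|=|S|$ at once, and the classification of the prime-graph components of the finite simple groups (due to Williams, Kondrat'ev, Suzuki, Iiyori, Yamaki and others) records that $t(S)\geq 2$ for every one of the $26$ sporadic simple groups, so $\Gamma(G)=\Gamma(S)$ is disconnected. For the ``if'' direction, assume $|G|=|S|$ (whence $\pi(G)=\pi(S)$ and the full prime factorization of $|G|$ is known) together with $t(G)\geq 2$.

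The main tool is the Gruenberg--Kegel structure theorem: a finite group $G$ with $t(G)\geq 2$ is one of (i) a Frobenius group, (ii) a $2$-Frobenius group, or (iii) a group possessing a normal series $1\trianglelefteq H\trianglelefteq K\trianglelefteq G$ in which $H$ is nilpotent, both $H$ and $G/K$ are $\pi_1$-groups, $K/H$ is a non-abelian simple group, and each odd-order connected component of $G$ is an order component of $K/H$. I would first eliminate (i) and (ii). If $G=N\rtimes C$ is a Frobenius group with kernel $N$ and complement $C$, then $N$ is nilpotent, $|N|\,|C|=|S|$ is a coprime factorization, and $C$ acts semiregularly on $N\setminus\{1\}$ so that $|C|$ divides $|N|-1$; running through the coprime factorizations of the sporadic order $|S|$, this divisibility together with the nilpotency of $N$ fails in every case. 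A $2$-Frobenius group is solvable with $t(G)=2$ and an analogous cyclic middle factor, and the corresponding arithmetic is again incompatible with the factorization of $|S|$. These are the routine cases.

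Hence case (iii) holds. Since every odd-order component of $G$ is an order component of $K/H$ and $t(G)\geq 2$, the simple group $K/H$ itself has disconnected prime graph, while $|K/H|$ divides $|S|$. Here I would appeal to the classification of the finite simple groups with disconnected prime graph: the admissible $K/H$ are precisely those groups on that list whose order divides $|S|$ and whose odd-order components are compatible with a disconnected prime graph on $\pi(S)$. For each sporadic $S$ this leaves only a short list of candidates. The decisive step is to force $|K/H|=|S|$: writing $|S|=|H|\cdot|K/H|\cdot|G/K|$, any candidate with $|K/H|$ a proper divisor of $|S|$ leaves a cofactor that must either reintroduce a prime already confined to an odd component of $K/H$, or else violate the requirement that $H$ be nilpotent and that $H,G/K$ be $\pi_1$-groups. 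Eliminating these forces $H=G/K=1$, so $G=K/H$ is a simple group of order exactly $|S|$; since no simple group other than $S$ has order $|S|$, we conclude $G\cong S$.

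The principal obstacle is the enumeration within case (iii), specifically the exclusion of the alternating groups and of the infinite families of groups of Lie type as possibilities for $K/H$. I would control the Lie-type families arithmetically: a large prime $p\in\pi(S)$ must lie in an odd-order component of $K/H$ and therefore divide $|K/H|$, and combining this with Zsigmondy's theorem and the explicit order formulas for the groups of Lie type bounds both the defining characteristic and the rank, reducing each family to finitely many candidates that are then dispatched by exact order comparison against the divisors of $|S|$. Carrying this analysis out uniformly over all $26$ sporadic orders, while keeping track of the few near-coincidences among simple-group orders, is the bulk of the proof.
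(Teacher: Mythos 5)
Your skeleton (Gruenberg--Kegel trichotomy, eliminate the Frobenius and $2$-Frobenius cases, then pin down the simple section $K/H$) reproduces the strategy of the paper's proof of its \emph{Main Theorem}, but you have missed that this corollary needs none of that: the paper's own proof is a two-line reduction, since identical order components give $|G|=|S|$ (their product) and $t(G)=t(S)\geq 2$, so $\Gamma(G)$ is disconnected and the Main Theorem applies directly. Re-proving the Main Theorem from scratch is legitimate in principle, but then the two elimination steps you dismiss as ``routine'' carry essentially all of the content (they occupy Lemmas 2.8--2.13, Lemma 2.15 and Lemmas 2.16--2.21 with twenty tables in the paper), and the mechanisms you propose for them do not suffice. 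For the Frobenius case, the only constraint you state is $|C|\mid |N|-1$ over coprime factorizations of $|S|$, and this fails concretely: for $|M_{22}|=2^7\cdot 3^2\cdot 5\cdot 7\cdot 11$ the coprime factorization $|N|=2^7\cdot 7\cdot 11=9856$, $|C|=3^2\cdot 5=45$ satisfies $45\mid 9855=|N|-1$. What kills such configurations is the fixed-point-free action of the complement on \emph{each} Sylow subgroup of the nilpotent kernel (so $|C|$ must divide $|N_p|-1$ for every $p$; here $45\mid 6$ fails), or, as the paper does it, the $\mathrm{GL}$-divisibility lemmas (Lemmas 2.5--2.7) that manufacture elements of order $pq$. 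You state neither, and the same omission undermines your treatment of the $2$-Frobenius case.

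Second, in case (iii) your proposed dichotomy for excluding candidates with $|K/H|$ a proper divisor of $|S|$ --- the cofactor ``reintroduces a prime confined to an odd component of $K/H$'' or ``violates $H$, $G/K$ being $\pi_1$-groups'' --- cannot work as stated, because $\pi_1(G)$ is determined by the unknown group $G$, not by $S$: when $|G|=|M_{11}|$ and $K/H\cong A_6$, the prime $11$ lies in $\pi(H)$ and nothing a priori forbids $11\in\pi_1(G)$, so neither horn of your dichotomy applies. Such candidates are eliminated in the paper by an argument absent from your proposal: $H$ nilpotent and normal forces $H_p\unlhd G$, then $|G/C_G(H_p)|$ divides $|\mathrm{Aut}(H_p)|$, and coprime action (Lemma 2.4) combined with the $\mathrm{GL}$-order lemma (Lemma 2.5) produces enough edges to make $\Gamma(G)$ connected, a contradiction. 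The same objection defeats your Lie-type bound, which rests on the unjustified claim that a large prime of $\pi(S)$ must divide $|K/H|$ rather than $|H|$. So the proposal is a plausible plan whose load-bearing steps are left unproved and, as formulated, would not go through.
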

\begin{cor}
Let $G$ be a finite group and  $S$ a sporadic simple group.  Then $G\cong S$ if and only if $|G|=|S|$ and the sets of orders of maximal abelian subgroups of $G$ and $S$ are equal.
\end{cor}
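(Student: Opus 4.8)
The plan is to reduce this corollary to the Main Theorem by showing that, under the stated hypotheses, the prime graph of $G$ is disconnected; since $|G|=|S|$ is assumed outright, the Main Theorem then delivers $G\cong S$. The forward implication is immediate, for an isomorphism $G\cong S$ preserves element orders and carries maximal abelian subgroups to maximal abelian subgroups, so both $|G|=|S|$ and the equality of the two sets of orders follow at once. All the substance lies in the reverse implication.

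The central observation I would establish first is that the set of orders of maximal abelian subgroups of a finite group $H$ --- write $M(H)$ for this set --- determines the edge set of $\Gamma(H)$. Concretely, for distinct primes $p,q\in\pi(H)$ I claim that $p$ and $q$ are adjacent in $\Gamma(H)$ if and only if $pq\mid m$ for some $m\in M(H)$. If $H$ has an element $x$ of order $pq$, then $\langle x\rangle$ is abelian and is contained in some maximal abelian subgroup $A$, whence $pq\mid|A|\in M(H)$. Conversely, if $pq\mid|A|$ for a maximal abelian subgroup $A$, then $A$ is abelian with both $p$ and $q$ dividing $|A|$, so it contains commuting elements $y,z$ of orders $p$ and $q$, and $yz$ has order $pq$; hence $p$ and $q$ are adjacent. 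This equivalence shows that the adjacency relation of $\Gamma(H)$ can be read off from $M(H)$ alone.

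With this in hand the reverse implication is short. Since $|G|=|S|$ we have $\pi(G)=\pi(|G|)=\pi(|S|)=\pi(S)$, so $\Gamma(G)$ and $\Gamma(S)$ share the same vertex set; and since $M(G)=M(S)$ by hypothesis, the adjacency criterion above forces $\Gamma(G)$ and $\Gamma(S)$ to have the same edges. Thus $\Gamma(G)=\Gamma(S)$ as graphs. Every sporadic simple group has a disconnected prime graph (this is part of the determination of the prime-graph components of the finite simple groups recorded in the references [2-5], where $t(S)\geq 2$ for all sporadic $S$), so $\Gamma(S)$, and therefore $\Gamma(G)$, is disconnected. Applying the Main Theorem to $G$ and $S$ with $|G|=|S|$ and $\Gamma(G)$ disconnected yields $G\cong S$.

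I expect no serious obstacle: the corollary is genuinely a consequence of the Main Theorem, and the only additional content is the elementary lemma linking maximal abelian subgroup orders to adjacency. The one point needing care is the converse half of that lemma --- that an abelian group whose order is divisible by two distinct primes contains an element whose order is their product. This step uses abelianness essentially (it fails for non-abelian groups, e.g. $S_3$ has order divisible by $2$ and $3$ but no element of order $6$), and it is precisely where the hypothesis on \emph{maximal abelian} subgroups, rather than on arbitrary subgroups, does its work.
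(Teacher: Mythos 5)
Your proof is correct and takes essentially the same route as the paper, which obtains all five corollaries from the Main Theorem by observing that each hypothesis forces the prime graph of $G$ to be disconnected. Your lemma that $M(G)$ determines the edges of $\Gamma(G)$ (with $p,q$ adjacent if and only if $pq\mid m$ for some $m\in M(G)$, via Cauchy's theorem in an abelian subgroup for one direction and maximality for the other) is precisely the detail the paper leaves implicit, and the fact that $t(S)\geq 2$ for every sporadic $S$ is the standard component data cited in [2--5].
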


\begin{cor}
Let $G$ be a finite group and  $S$ a sporadic simple group.   Then $G\cong S$ if and only if $|G|=|S|$ and the largest element orders of $G$ and $S$ are the same.
\end{cor}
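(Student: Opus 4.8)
The plan is to derive the statement from the Main Theorem, so the forward direction is immediate: if $G\cong S$ then $|G|=|S|$ and $G,S$ have identical sets of element orders, hence the same largest element order. All the work is in the converse, and I would reduce it to the Main Theorem by proving that the hypothesis forces $\Gamma(G)$ to be disconnected; once that is in hand, $|G|=|S|$ together with the disconnectedness of $\Gamma(G)$ yields $G\cong S$ at once.

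Write $m$ for the common largest element order of $G$ and $S$, and note that $|G|=|S|$ gives $\pi(G)=\pi(S)$. The guiding observation is that the largest element order caps the edges of $\Gamma(G)$: if distinct primes $p,q\in\pi(G)$ are joined in $\Gamma(G)$ then $G$ has an element of order $pq$, so $pq\le m$. Thus $\Gamma(G)$ is a subgraph, on the same vertex set $\pi(S)$, of the arithmetic graph $H_m$ in which $p$ and $q$ are joined exactly when $pq\le m$; consequently, whenever $H_m$ is disconnected so is $\Gamma(G)$. In particular, if $p$ is the largest prime of $\pi(S)$ and $2p>m$, then $pq\ge 2p>m$ for every other prime $q$, so $p$ is an isolated vertex already in $H_m$. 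This disposes immediately of every sporadic group whose largest element order equals its largest prime (the Mathieu groups and $J_1$), and more generally of all those satisfying $2p>m$ (for instance the Monster, where $p=71$ and $m=119$, so $2p=142>119$).

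For the remaining sporadic groups $H_m$ is connected — typically because the prime $2$ is joined to every other prime — and the order bound alone no longer separates $\Gamma(G)$. Here I would argue case by case, using that $\Gamma(S)$ is known to have a one-element component $\{p\}$ and that $|G|=|S|$ forces the Sylow $p$-subgroup of $G$ to be cyclic of order $p$; it then remains only to exclude, for each prime $q$ with $pq\le m$, an element of order $pq$ in $G$. This final exclusion is the main obstacle: for a group such as $Co_1$ (where $p=23$, $m=60$, and $2\cdot 23=46\le 60$) or $J_2$ (where $p=7$, $m=15$, and $2\cdot 7=14\le 15$) the dangerous edge $2$–$p$ is permitted by the order bound, so ruling it out cannot rest on a crude inequality and must instead use detailed arithmetic and Sylow-normalizer information about $G$. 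The subtlety is precisely that knowing the spectrum of $S$ does not by itself transfer to $G$, since the hypothesis pins down only the maximal element order of $G$, not its full spectrum.

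Finally, I would point out that the disconnectedness needed in these hard cases is exactly what is established inside the proofs of the known characterizations of sporadic groups by order and maximal element order: that body of work, in the same setting $|G|=|S|$ with equal largest element orders, already shows $\Gamma(G)$ to be non-connected via the Sylow and Frobenius-type analysis sketched above. Invoking that non-connectedness and then applying the Main Theorem completes the argument, and at the same time exhibits the earlier maximal-element-order characterizations as special cases of the present, more uniform, result.
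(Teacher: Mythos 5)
Your proposal is correct and follows essentially the same route as the paper: the paper derives this corollary (like the others) in one line, by observing that under the hypotheses $|G|=|S|$ and equal largest element orders the prime graph of $G$ is non-connected --- a fact drawn from the existing characterizations of sporadic groups by order and maximal element order --- and then invoking the Main Theorem. Your observation that $2p>m$ already isolates the largest prime $p$ for the Mathieu groups, $J_1$, the Monster, etc.\ is a pleasant self-contained shortcut, but your fallback to the known characterization literature for the hard cases (such as $Co_1$ or $J_2$) is exactly the paper's own implicit argument.
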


Throughout the paper, the actions unspecified always means conjugate action.  Let $G$ be a finite group, for a prime $p\in \pi(G)$, $G_p$ denotes the $Sylow$ $p-$subgroup of $G$. In addition, $G=U\ltimes V$ denotes $G$ is the semidirect product of $U$ and $V$, especially, $V\unlhd G$.

\section{ The Proof of the Main Theorem}

In order to prove the Main Theorem, we first present some lemmas which are useful in our proof.
\begin{lemma}\label{ty1}(\cite{Williams}, Theorem $A$)~ If $G$ is a finite group whose prime graph has more than one components, then one of the following holds:

(1) $G$ is a $Frobenius$ group;

(2) $G$ is a  $2-$$Frobenius$ group;

(3) $G$ has a normal series $1\unlhd H \unlhd K \unlhd G $ such that $H$ and $G/K$ are $\pi_1$ groups, $K/H$ is a non-abelian simple group, $H$ is a nilpotent group, where $2\in \pi_1$. And $|G/K|\big| |\mathrm{Out}(K/H)|$.

\end{lemma}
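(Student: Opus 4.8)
The plan is to induct on $|G|$, building on one elementary but decisive observation about the component structure: if two primes $p,q$ both divide the order of a single element $g$, then a suitable power of $g$ has order $pq$, so $p$ and $q$ are joined in $\Gamma(G)$. Consequently every element of $G$ has order a $\pi_i$-number for exactly one component $\pi_i$, and any nilpotent subgroup $N$ of $G$ — being the direct product of its Sylow subgroups, hence containing an element of order $pq$ for every pair of primes $p,q \in \pi(N)$ — has $\pi(N)$ contained in a single component. I write $\pi_1$ for the component containing $2$ (when $2 \in \pi(G)$). The central analytic tool throughout is a Frobenius criterion driven by the graph: if $p,q$ lie in different components, then no element of order $pq$ exists, so a Sylow $q$-subgroup acting coprimely on a relevant $p$-section must act without nontrivial fixed points, and a fixed-point-free action of this kind forces Frobenius structure.

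First I would isolate the two degenerate alternatives (1) and (2). Applying the fixed-point-free principle to the Fitting subgroup $F(G)$ and a Hall complement, one detects exactly when $G$ — or a controlling section of it — is a Frobenius group with nilpotent kernel, or a $2$-Frobenius group $G=ABC$ with $AB$ and $BC$ Frobenius. In particular, for solvable $G$ with $t(G)\ge 2$ the self-centralizing property $C_G(F(G))\le F(G)$ combines with the fixed-point-free action of the non-$\pi_1$ Hall subgroups on $F(G)$ to force $t(G)=2$ and place $G$ into case (1) or (2). This handles every group that is Frobenius or $2$-Frobenius, so from here I may assume $G$ is neither.

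Under that assumption I would produce the layered structure (3). Pass to $\bar G = G/R$, where $R=\mathrm{Sol}(G)$ is the solvable radical, so that $\mathrm{Soc}(\bar G)=T_1\times\cdots\times T_m$ is a direct product of non-abelian simple groups with $\bar G \le \mathrm{Aut}(\mathrm{Soc}(\bar G))$. A direct-product argument forces $m=1$: if $m\ge 2$, then for every $p\mid |T_1|$ and $q\mid |T_2|$ the element of order $pq$ in $T_1\times T_2$ would merge all of $\pi(\mathrm{Soc})$ into one component, and tracing the remaining non-$\pi_1$ components back into $R$ or $\bar G/\mathrm{Soc}(\bar G)$ drives $G$ into the already-excluded Frobenius or $2$-Frobenius cases. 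Writing $P=\mathrm{Soc}(\bar G)$ and letting $K$ be its preimage in $G$, note that any nilpotent normal $H\unlhd G$ with $K/H$ simple must in fact equal $R$ (since $H$ nilpotent gives $H\le R$, while $R/H\unlhd K/H$ simple and $R$ solvable forces $R=H$); thus the task is exactly to show $R$ is nilpotent, and then set $H=R$. With $K/H=P$ simple and $C_{\bar G}(P)=1$ (as $P$ is the unique minimal normal subgroup of the radical-free $\bar G$), one gets $\bar G \hookrightarrow \mathrm{Aut}(P)$ and hence $G/K=\bar G/P \hookrightarrow \mathrm{Out}(P)$, yielding $|G/K|\bigm| |\mathrm{Out}(K/H)|$ at once.

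I expect the main obstacle to be precisely the confinement of the non-$\pi_1$ primes to the simple section $K/H$: proving that $R$ is nilpotent and that $\pi(H)\subseteq\pi_1$ and $\pi(G/K)\subseteq\pi_1$. Concretely, one must rule out a prime $r$ lying in a component $\pi_i$ with $i\ge 2$ from dividing $|H|$ or $|G/K|$. The absence of elements of order $rp$ for $p\in\pi_1$ forces an $r$-element to act fixed-point-freely on the relevant $\pi_1$-sections of $H$ or on the preimage structure above $K$, and the coprime fixed-point-free analysis — the representation-theoretic heart inherited from the Gruenberg–Kegel lemma — then pushes $G$ back into the Frobenius or $2$-Frobenius alternatives, contrary to hypothesis. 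Once this confinement is secured, the self-centralizing behaviour of $H=R$ and the embedding $\bar G/P\hookrightarrow\mathrm{Out}(P)$ close the argument and deliver case (3).
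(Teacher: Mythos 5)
The paper does not prove this lemma at all: it is quoted verbatim as Theorem~A of Williams \cite{Williams} (the Gruenberg--Kegel theorem), so the only fair comparison is with the known proof, whose architecture your sketch does broadly reproduce (solvable case gives Frobenius or $2$-Frobenius; otherwise pass to the solvable radical $R$, show $\mathrm{Soc}(G/R)$ is a single non-abelian simple group $P$, take $K$ the preimage of $P$ and $H=R$, and get $G/K\hookrightarrow \mathrm{Out}(P)$ from $C_{G/R}(P)=1$). Your reduction of the problem to ``$R$ is nilpotent and the primes of $H$ and of $G/K$ lie in $\pi_1$'' is correct, as is the embedding argument at the end.

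The genuine gap is that the two pivotal steps are asserted rather than proved, and the mechanism you invoke for them does not work as stated. First, in the solvable case, ``$C_G(F(G))\le F(G)$ plus fixed-point-free action of the non-$\pi_1$ Hall subgroups'' does not by itself yield the full conclusion: $F(G)$ may itself be a $\pi_i$-group for $i\ge 2$, the existence and conjugacy of the relevant Hall subgroups must be supplied by Hall's theorems, and forcing the precise two-step chain $G=DEF$ with $t(G)=2$ requires an induction using the structure theory of Frobenius complements (e.g.\ that a group acting coprimely and fixed-point-freely is a Frobenius complement, with its strong Sylow restrictions); none of this is carried out. Second, and more seriously, the confinement step in the non-solvable case --- ruling out a prime $r\in\pi_i$, $i\ge 2$, from $|H|$ or $|G/K|$, and deducing nilpotency of $R$ --- is exactly the content of the Gruenberg--Kegel theorem, and you justify it by appeal to ``the representation-theoretic heart inherited from the Gruenberg--Kegel lemma,'' which is circular. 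Moreover the claimed contradiction, that a fixed-point-free action ``pushes $G$ back into the Frobenius or $2$-Frobenius alternatives,'' is a non sequitur: a \emph{section} of $G$ being Frobenius does not make $G$ itself Frobenius, so no conflict with your standing assumption arises without further argument (indeed nilpotency of $R$ classically needs Thompson's theorem on fixed-point-free automorphisms of prime order, which your sketch never invokes). The same looseness affects your elimination of $m\ge 2$ in $\mathrm{Soc}(G/R)=T_1\times\cdots\times T_m$: merging $\pi(\mathrm{Soc})$ into one component is not itself a contradiction, and tracking where the components $\pi_i$, $i\ge 2$, can live (in $R$, in the outer action, or in permutations of the $T_j$) is precisely the analysis that is missing. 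As an outline of the classical route the proposal is sound; as a proof it leaves the theorem's core unestablished.
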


\begin{lemma}\label{ty2}(\cite{Italian}, Lemma 2.6(1)) ~Let $G$ be a $Frobenius$ group with $Frobenius$ kernel $K$ and  $Frobenius$ complement $H$. Then $K$ is nilpotent and $|H|\big||K|-1$. Moreover, $ t(G)=2$, and  $\Gamma(G)=\{\pi(H),\ \pi(K)\}.$
\end{lemma}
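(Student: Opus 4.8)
The plan is to exploit the defining fixed-point-free action of $H$ on $K$, namely $C_K(h)=1$ for every $1\neq h\in H$, equivalently $H\cap H^g=1$ for every $g\in G\setminus H$. First I would record the elementary arithmetic. Conjugation makes $H$ act on the set $K\setminus\{1\}$, and fixed-point-freeness says the stabiliser in $H$ of any nonidentity element of $K$ is trivial; hence every $H$-orbit on $K\setminus\{1\}$ has length exactly $|H|$, so $|H|$ divides $|K|-1$. In particular $\gcd(|H|,|K|)=1$, since a common prime would divide both $|K|$ and $|K|-1$. Consequently $\pi(H)$ and $\pi(K)$ are disjoint and $\pi(G)$ is their union.

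Next I would treat the nilpotency of $K$, which is the one genuinely deep ingredient. Here I would invoke Thompson's theorem that a finite group admitting a fixed-point-free automorphism of prime order is nilpotent; applied to the action of $H$ on $K$ it gives that the Frobenius kernel $K$ is nilpotent. Nilpotency immediately makes $K$ the direct product of its Sylow subgroups, so for any two primes $p,q\in\pi(K)$ one finds commuting elements of orders $p$ and $q$, hence an element of order $pq$; therefore $\pi(K)$ induces a connected (indeed complete) subgraph of $\Gamma(G)$.

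To separate the two parts I would use the standard element partition of a Frobenius group. Since $N_G(H)=H$ (if $g$ normalised $H$ then $H^g=H$ would force $g\in H$ by the Frobenius condition), $H$ has exactly $|K|$ conjugates, pairwise meeting trivially, which account for $|K|(|H|-1)=|G|-|K|$ nonidentity elements; together with $K$ these exhaust $G$. Thus every element of $G$ lies either in $K$ or in some conjugate of $H$, and so its order divides $|K|$ or $|H|$. Hence no element has order $pq$ with $p\in\pi(K)$ and $q\in\pi(H)$, i.e. no edge of $\Gamma(G)$ joins $\pi(K)$ to $\pi(H)$, giving $t(G)\geq 2$.

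It remains to show $\pi(H)$ is itself connected, which I expect to be the subtlest of the elementary steps and where the structure theory of Frobenius complements enters. If $|H|$ is even, every Sylow subgroup of $H$ is cyclic or generalized quaternion, forcing a unique involution $z$, which is then central; multiplying $z$ by an element of each prime order $q\in\pi(H)$ yields an element of order $2q$, so $2$ is joined to every vertex and $\pi(H)$ is connected. If $|H|$ is odd, all Sylow subgroups are cyclic, so $H$ is metacyclic of the form $\langle a\rangle\rtimes\langle b\rangle$ with coprime cyclic factors; for primes $p\mid|\langle a\rangle|$ and $q\mid|\langle b\rangle|$ the subgroup generated by the corresponding order-$p$ and order-$q$ elements has order $pq$ and, since every subgroup of order $pq$ in a Frobenius complement is cyclic, it is cyclic, producing an element of order $pq$. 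In either case $\pi(H)$ is connected, so $\Gamma(G)$ has exactly the two components $\pi(H)$ and $\pi(K)$, whence $t(G)=2$ and $\Gamma(G)=\{\pi(H),\pi(K)\}$. The main obstacle is the nilpotency of $K$, which one imports from Thompson's theorem rather than proving by hand; among the self-contained steps, the connectivity of $\pi(H)$ is the delicate point.
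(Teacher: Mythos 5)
The paper offers no proof of this lemma at all: it is imported as a citation (Lemma 2.6(1) of \cite{Italian}, which ultimately goes back to Williams \cite{Williams}), so there is no internal argument to compare yours against. Your reconstruction is the standard one, and most of it is correct: the orbit count on $K\setminus\{1\}$ giving $|H|\,\big|\,|K|-1$ and hence $(|H|,|K|)=1$; Thompson's fixed-point-free automorphism theorem for the nilpotency of $K$ (whence $\pi(K)$ is a clique); the partition of $G$ into $K$ and the $|K|$ conjugates of $H$, via $N_G(H)=H$ and the count $|K|+|K|(|H|-1)=|G|$, which rules out any edge between $\pi(K)$ and $\pi(H)$; and the structure theory of Frobenius complements for the connectivity of $\pi(H)$.

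The one genuine flaw is in the even-order case: you assert that because the Sylow subgroups of $H$ are cyclic or generalized quaternion, $H$ has a unique involution. That implication is false for abstract groups --- $S_3$ has cyclic Sylow $2$-subgroups and three involutions --- since uniqueness of the involution inside each Sylow $2$-subgroup says nothing about involutions lying in different Sylow subgroups. The correct (and standard) argument again uses the action on $K$: for an involution $z\in H$, the map $k\mapsto k^{-1}k^z$ is injective by fixed-point-freeness, hence surjective, and applying $z$ to $k^{-1}k^z$ shows that $z$ inverts every element of $K$; so if $z_1,z_2$ are involutions of $H$, then $z_1z_2$ centralizes $K$, and since every nontrivial element of $H$ acts without fixed points on $K\neq 1$, the action is faithful and $z_1=z_2$. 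Centrality of $z$ then follows because $z^h$ is again an involution for every $h\in H$, and the rest of your argument (including the odd-order case via the Z-group structure $\langle a\rangle\rtimes\langle b\rangle$ and cyclicity of subgroups of order $pq$ in a Frobenius complement) goes through as written.
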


\begin{lemma}\label{ty3}(\cite{Italian}, Lemma 2.6(2))~
 Let $G$ be a $2-$$Frobenius$ group, then $G=DEF,$ where $D$ and $DE$ are normal subgroups in $G$, $DE$ and $EF$ are $Frobenius$ groups with kernels $D$ and $E$, respectively. Moreover $t(G)=2,$ $\pi_1(G)=\pi (D)\cup \pi (F)$ and $\pi_2(G)=\pi(E)$.
\end{lemma}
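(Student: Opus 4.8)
The plan is to take as the definition of a $2$-Frobenius group the existence of a normal series $1\unlhd D\unlhd K\unlhd G$ for which $K$ is a Frobenius group with kernel $D$ and $G/D$ is a Frobenius group with kernel $K/D$, and to extract the stated decomposition from it. First I would split the two layers. As $K$ is Frobenius with kernel $D$, the orders $|D|$ and $|K:D|$ are coprime, so Schur--Zassenhaus gives a complement $E$ with $K=D\rtimes E$ and $E\cong K/D$; moreover $K=DE\unlhd G$ because $K/D$ is the Frobenius kernel of $G/D$ (hence characteristic in $G/D$), and $D\unlhd G$ since $D$ is characteristic in $K$. To realise $F$ as a genuine subgroup I would run a Frattini argument: every $G$-conjugate of $E$ is again a complement to $D$ in $K$, all such complements are $D$-conjugate, and $N_D(E)=C_D(E)=1$, so $G=D\,N_G(E)$ with $D\cap N_G(E)=1$. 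Thus $N_G(E)$ is a complement to $D$ in $G$ mapping isomorphically onto $G/D$ and carrying $E$ onto the kernel $K/D$; writing $EF:=N_G(E)$ and choosing a complement $F\cong G/K$ to $E$ in it, I obtain that $EF$ is a Frobenius group with kernel $E$ and that $G=DEF$ with $D$ and $DE=K$ normal in $G$, as required.

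Next I would establish that $E$ and $F$ are cyclic. Since $E\cong K/D$ is a Frobenius kernel it is nilpotent, and since $E$ is a Frobenius complement of $K$ all its Sylow subgroups are cyclic or (for the prime $2$) generalized quaternion. If $2\mid|E|$ then $E$ would have a unique involution $z$, which is fixed by every automorphism of $E$ and hence centralized by the nontrivial group $F$ acting fixed-point-freely on $E$ (as the Frobenius complement of $G/D$ acts on its kernel) --- impossible. So $E$ has odd order, whence all its Sylow subgroups are cyclic and $E$, being nilpotent, is cyclic. Then $F$ acts faithfully on the cyclic group $E$, so $F$ embeds in the abelian group $\mathrm{Aut}(E)$; an abelian Frobenius complement is cyclic, so $F$ is cyclic. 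In particular $2\notin\pi(E)$, and each of $\pi(D)$, $\pi(E)$, $\pi(F)$ induces a complete subgraph of $\Gamma(G)$ ($D$ nilpotent, $E$ and $F$ cyclic).

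I would then show that $\pi(E)$ is a connected component separated from the rest. Suppose some $x\in G$ had order $pq$ with $p\in\pi(E)$ and $q\in\pi(G)\setminus\pi(E)$, and set $a=x^{p}$, $b=x^{q}$, two commuting elements of orders $q$ and $p$. Because $p\notin\pi(D)\cup\pi(F)$, a Sylow $p$-subgroup of $K$ is Sylow in $G$, so $b\in K$; and in $G/D$ the image $\bar b$ is a nontrivial element of the Frobenius kernel $K/D$, whose centralizer lies in $K/D$, forcing $\bar a\in K/D$ and hence $a\in K$. Thus $ab\in K$ has order $pq$ with $p\in\pi(E)$ and $q\in\pi(D)$ (as $q\in\pi(K)\setminus\pi(E)$), contradicting Lemma \ref{ty2}, which gives $\Gamma(K)=\{\pi(D),\pi(E)\}$. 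Hence no edge leaves $\pi(E)$; being also complete, $\pi(E)$ is exactly one component.

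Finally I would prove that $\pi(D)\cup\pi(F)$ is connected, so that $t(G)=2$. The key point is that $F$ cannot act fixed-point-freely on $D$: if it did, an element $f\in F$ of prime order $r$ would satisfy $C_K(f)=1$ (its centralizer meets $D$ trivially by assumption and maps trivially into $K/D\cong E$ by fixed-point-freeness on $E$), and $r\notin\pi(K)$, so $K\rtimes\langle f\rangle$ would be a Frobenius group with nilpotent kernel $K$ --- impossible, since a Frobenius group such as $K$ is not nilpotent (its complement is self-normalizing and proper). Hence a nontrivial element of $D$ commutes with an element of prime order $r\in\pi(F)$, producing an edge between $\pi(D)$ and $\pi(F)$ or a prime common to both; as $\pi(D)$ and $\pi(F)$ are complete, $\pi(D)\cup\pi(F)$ is connected. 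Therefore $\Gamma(G)$ has exactly the two components $\pi(D)\cup\pi(F)\ni 2$ and $\pi(E)$, giving $t(G)=2$, $\pi_1(G)=\pi(D)\cup\pi(F)$ and $\pi_2(G)=\pi(E)$. I expect this last connectivity step to be the main obstacle: the separation of $\pi(E)$ drops out of the two Frobenius structures, but excluding a third component genuinely uses the interaction of both layers through the non-nilpotency of the middle Frobenius group $K$.
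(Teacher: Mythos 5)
Your proposal is correct, but there is nothing internal to compare it against: the paper does not prove this lemma at all, it simply quotes it from \cite{Italian}, Lemma 2.6(2) (the result goes back to Gruenberg--Kegel and Williams \cite{Williams}). What you have written is a sound self-contained reconstruction of that standard proof, and all the key moves check out: Schur--Zassenhaus plus the Frattini argument to realise $E$ and then $EF=N_G(E)$ as actual subgroups (note that the conjugacy half of Schur--Zassenhaus needs $D$ solvable, which you get from Thompson's nilpotency of Frobenius kernels, and that $N_D(E)=C_D(E)=1$ follows from $[e,d]\in D\cap E=1$ together with $C_D(e)=1$ for $e\in E^{\#}$); the unique-involution argument forcing $|E|$ odd and hence $E$, $F$ cyclic; the separation of $\pi(E)$ by pushing a hypothetical element of order $pq$ into $K$ and contradicting Lemma \ref{ty2}; and the connectivity of $\pi(D)\cup\pi(F)$ via the observation that $F$ cannot act fixed-point-freely on $D$, since otherwise $K\rtimes\langle f\rangle$ would be a Frobenius group with non-nilpotent kernel $K$ (for this last step one should note explicitly that \emph{every} nontrivial power $f^{i}$ of $f$ again lies in $F^{\#}$ and so also has $C_K(f^{i})=1$, which is what the Frobenius criterion for the semidirect product actually requires, and that your containment $C_K(f)D/D\leq C_{K/D}(\bar f)=1$ needs no coprimality, which matters because $\pi(D)$ and $\pi(F)$ may genuinely intersect, as in $S_4$ with $D=V_4$, $E=C_3$, $F=C_2$ --- a case your ``edge or common prime'' phrasing correctly covers). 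You also correctly identify where the work is: the two Frobenius layers give the separation of $\pi(E)$ almost for free, while ruling out a third component uses the interaction of the layers through the non-nilpotency of $K$. In short, the proposal supplies a complete proof of a statement the paper handles by citation.
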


\begin{lemma}\label{ty4}(\cite{Kurzweil}, 8.2.3)~ Let $G$ and $A$ be two groups. Let $p$ be a prime divisor of $|G|$. Suppose that the action of $A$ on $G$ is $coprime$. Then there exists an $A-$invariant $Sylow \ p-$subgroup of G.
\end{lemma}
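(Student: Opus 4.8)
The plan is to realize the abstract coprime action inside a single group and then reduce everything to Sylow theory together with the Schur--Zassenhaus theorem. First I would form the semidirect product $\Gamma = G \rtimes A$ determined by the given action, so that $G \unlhd \Gamma$, $\Gamma/G \cong A$, and $|\Gamma| = |G|\,|A|$ with $\gcd(|G|,|A|)=1$. Because $p$ divides $|G|$ but not $|A|$, any Sylow $p$-subgroup $P$ of $G$ has the full $p$-part $|\Gamma|_p = |G|_p$, hence $P$ is also a Sylow $p$-subgroup of $\Gamma$. Moreover, since $G \unlhd \Gamma$, conjugation by any element of $\Gamma$ carries a Sylow $p$-subgroup of $G$ to a Sylow $p$-subgroup of $G$; the goal of producing an $A$-invariant Sylow $p$-subgroup of $G$ then becomes the problem of finding a $\Gamma$-conjugate of $P$ that is normalized by a complement of $G$ which can be taken to be $A$ itself.

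Second, I would apply the Frattini argument to $G \unlhd \Gamma$ with $P \in \mathrm{Syl}_p(G)$, obtaining $\Gamma = G\, N_\Gamma(P)$. Writing $N = N_\Gamma(P)$, this yields $N \cap G = N_G(P)$ and $[N : N_G(P)] = [\Gamma : G] = |A|$. Thus inside $N$ the subgroup $N_G(P)$ is a normal Hall subgroup whose order is coprime to its index $|A|$. By the existence part of the Schur--Zassenhaus theorem, $N_G(P)$ admits a complement $A_1$ in $N$; then $A_1$ is a complement of $G$ in $\Gamma$ with $|A_1| = |A|$, and by construction $A_1 \le N = N_\Gamma(P)$, so $A_1$ normalizes $P$.

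Finally, both $A$ and $A_1$ are complements of $G$ in $\Gamma$, hence Hall subgroups of $\Gamma$ of equal order $|A|$. By the conjugacy part of the Schur--Zassenhaus theorem they are conjugate in $\Gamma$, say $A_1 = A^{g}$ for some $g \in \Gamma$. Then $A = A_1^{g^{-1}}$ normalizes $Q := P^{g^{-1}}$, and since $G \unlhd \Gamma$ the conjugate $Q$ is again a Sylow $p$-subgroup of $G$. Therefore $Q$ is the desired $A$-invariant Sylow $p$-subgroup of $G$.

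The step I expect to be the genuine obstacle, rather than bookkeeping, is the conjugacy of the two complements, because the conjugacy half of Schur--Zassenhaus requires solvability of one of the factors. This is exactly where coprimality does the essential work: $\gcd(|G|,|A|)=1$ forces one of $|G|$, $|A|$ to be odd, so the Feit--Thompson odd order theorem supplies the needed solvability and the conjugacy goes through. If one prefers to avoid invoking Feit--Thompson, the statement can instead be obtained by a Glauberman-type counting argument on $\mathrm{Syl}_p(G)$, using that $A$ permutes this set, that its cardinality is $\equiv 1 \pmod p$, and that $|A|$ is coprime to $p$; I would fall back on this only if the full conjugacy theorem were considered too heavy for the present context.
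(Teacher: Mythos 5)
The paper gives no proof of this lemma at all: it is quoted verbatim as Theorem 8.2.3 of Kurzweil--Stellmacher, and your argument is precisely the standard proof of that cited result --- form $\Gamma=G\rtimes A$, observe that a Sylow $p$-subgroup $P$ of $G$ is Sylow in $\Gamma$, apply the Frattini argument to get $\Gamma=G\,N_\Gamma(P)$, use the existence half of Schur--Zassenhaus to produce a complement $A_1\le N_\Gamma(P)$, and use the conjugacy half (legitimately backed by Feit--Thompson, since $(|G|,|A|)=1$ forces one factor to have odd order) to replace $A_1$ by $A$ after conjugation. So your proof is correct and matches the textbook approach the paper relies on. One small caveat: your ``fallback'' counting sketch at the end is not sound as stated, since orbit sizes dividing $|A|$ (hence prime to $p$) whose sum is $\equiv 1 \pmod p$ need not include an orbit of size one (e.g.\ $3+5\equiv 1 \pmod 7$); a genuine Glauberman-lemma argument again needs the solvability/conjugacy machinery, so nothing is actually saved --- but this aside is immaterial, as your main argument is complete.
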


\begin{lemma}\label{ty5}(\cite{Italian}, Lemma 2.4)~ Let $G$ be a $p-$group of order $p^n,$ and $G$ act on a $q-$group $H$ of order $q^m$, where $p,\ q$ are two distinct primes. If $|G|\nmid |\rm{GL}(m,q)|,$ then $pq\in \pi_e(G\ltimes H)$.\end{lemma}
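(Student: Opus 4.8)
The plan is to prove the contrapositive: assuming $pq \notin \pi_e(G \ltimes H)$, I would show that $|G|$ divides $|\mathrm{GL}(m,q)|$. The first step is to translate the hypothesis $pq \notin \pi_e(G \ltimes H)$ into a statement about centralizers. Since $H \unlhd G \ltimes H$ has order $q^m$, it is the unique Sylow $q$-subgroup of $G \ltimes H$, so every element of order $q$ lies in $H$, while every $p$-element is conjugate into the Sylow $p$-subgroup $G$. Decomposing an element of order $pq$ into its commuting $p$- and $q$-parts then shows that $pq \in \pi_e(G \ltimes H)$ holds if and only if some $g \in G$ of order $p$ centralizes a nontrivial element of $H$: given such $g$ and $1 \ne h \in C_H(g)$, a suitable power of $h$ has order $q$ and $gh$ has order $pq$, and conversely. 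Hence, under the assumption, no element of order $p$ in $G$ fixes a nontrivial element of $H$.

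Next I would deduce that $G$ acts faithfully on $H$. If the kernel $N$ of the action $\rho\colon G \to \mathrm{Aut}(H)$ were nontrivial, then being a nontrivial $p$-group it would contain an element $g$ of order $p$; but such $g$ centralizes all of $H$, so $C_H(g) = H \ne 1$, contradicting the conclusion of the first step. Thus $\rho$ is injective and $G \hookrightarrow \mathrm{Aut}(H)$.

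The third and crucial step is to push this faithful action down to the Frattini quotient $\bar H = H/\Phi(H)$, an elementary abelian $q$-group of some rank $d \le m$, on which $G$ acts with $\mathrm{Aut}(\bar H) = \mathrm{GL}(d,q)$. Here I would invoke the standard fact that the kernel of the restriction $\mathrm{Aut}(H) \to \mathrm{Aut}(H/\Phi(H))$ is a $q$-group (Burnside basis theorem, i.e.\ a stability-group argument). Consequently any $g \in G$ acting trivially on $\bar H$ maps to a $q$-element of $\mathrm{Aut}(H)$; but $\rho(g)$ is simultaneously a $p$-element, so $\rho(g) = 1$, whence $g = 1$ by faithfulness. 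Therefore $G$ embeds into $\mathrm{GL}(d,q)$.

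Finally, a divisibility bookkeeping finishes the argument: since $d \le m$, the group $\mathrm{GL}(d,q)$ embeds block-diagonally in $\mathrm{GL}(m,q)$, so $|\mathrm{GL}(d,q)|$ divides $|\mathrm{GL}(m,q)|$; combining this with $|G| \mid |\mathrm{GL}(d,q)|$ from the embedding gives $|G| \mid |\mathrm{GL}(m,q)|$, contradicting $|G| \nmid |\mathrm{GL}(m,q)|$. This contradiction yields $pq \in \pi_e(G \ltimes H)$. I expect the main obstacle to be the third step: the passage to the Frattini quotient is essential precisely because for non-abelian $H$ the full automorphism group $\mathrm{Aut}(H)$ need not embed into $\mathrm{GL}(m,q)$, and the coprimality $p \ne q$ is exactly what makes the reduction to $\bar H$ lossless.
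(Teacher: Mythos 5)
Your proof is correct. Note that the paper itself offers no argument for this lemma --- it is quoted verbatim from \cite{Italian} (Lemma 2.4) --- so there is no in-paper proof to compare against; your argument is the standard one for this fact, and every step checks out: the equivalence of $pq\in\pi_e(G\ltimes H)$ with some order-$p$ element of $G$ centralizing a nontrivial element of the normal Sylow $q$-subgroup $H$, the resulting faithfulness of $G\to \mathrm{Aut}(H)$, Burnside's theorem that the kernel of $\mathrm{Aut}(H)\to\mathrm{Aut}(H/\Phi(H))$ is a $q$-group (which kills the image of any $p$-element lying in it), and the block-diagonal embedding giving $|\mathrm{GL}(d,q)|\,\big|\,|\mathrm{GL}(m,q)|$ for $d\le m$. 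Your closing remark is also the right diagnosis: passing to the Frattini quotient is exactly what makes the argument work for non-abelian $H$, since $\mathrm{Aut}(H)$ itself need not embed in $\mathrm{GL}(m,q)$; an equivalent variant replaces Burnside's theorem by the coprime-action fact $C_{H/\Phi(H)}(g)=C_H(g)\Phi(H)/\Phi(H)$, but the two routes are interchangeable.
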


By the above Lemma, we have the following corollary.
\begin{cor}\label{cor51}Let $G$ be a $p-$group of order $p^n,$ and $G$ act on a $q-$group $H$ of order $q^m$. If  $p^r\nmid\prod_{i=1}^m(q^i-1)$, where $1\leq r\leq n$, then $pq\in \pi_e(G\ltimes H)$. \end{cor}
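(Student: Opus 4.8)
The plan is to deduce the corollary directly from Lemma~\ref{ty5} by translating the arithmetic hypothesis $p^r \nmid \prod_{i=1}^m(q^i-1)$ into the divisibility condition $|G| \nmid |\mathrm{GL}(m,q)|$ that Lemma~\ref{ty5} requires. The key observation is the classical order formula
\[
|\mathrm{GL}(m,q)| = q^{m(m-1)/2}\prod_{i=1}^m(q^i-1),
\]
in which the full power of $q$ is collected in the factor $q^{m(m-1)/2}$ while the remaining factor $\prod_{i=1}^m(q^i-1)$ is coprime to $q$. Since $|G|=p^n$ is a power of the prime $p\neq q$, the $q$-part $q^{m(m-1)/2}$ of $|\mathrm{GL}(m,q)|$ contributes nothing to whether $p^n$ divides it; the only relevant factor is $\prod_{i=1}^m(q^i-1)$.

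First I would write $|G|=p^n$ and invoke the order formula above, noting that $\gcd(p,q)=1$ forces $p^n \mid |\mathrm{GL}(m,q)|$ to be equivalent to $p^n \mid \prod_{i=1}^m(q^i-1)$. Next I would use the hypothesis: we are given $p^r \nmid \prod_{i=1}^m(q^i-1)$ for some $r$ with $1\le r\le n$. Since $r\le n$, if $p^n$ divided $\prod_{i=1}^m(q^i-1)$ then a fortiori $p^r$ would divide it, contradicting the hypothesis. Hence $p^n \nmid \prod_{i=1}^m(q^i-1)$, and therefore $|G|=p^n \nmid |\mathrm{GL}(m,q)|$. At this point the hypothesis of Lemma~\ref{ty5} is satisfied, so Lemma~\ref{ty5} yields $pq \in \pi_e(G\ltimes H)$, which is exactly the conclusion.

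There is essentially no hard part here, as the statement is a routine arithmetic reformulation of Lemma~\ref{ty5}; the corollary merely packages the abstract condition $|G|\nmid|\mathrm{GL}(m,q)|$ into a form convenient for later computations with orders of sporadic groups, where one repeatedly needs to check that a $p$-power does not divide $\prod_{i=1}^m(q^i-1)$. The only point requiring a moment's care is confirming that the $q$-part of $|\mathrm{GL}(m,q)|$ is irrelevant, which follows immediately from $p\neq q$, and ensuring the implication runs in the correct direction, using $r\le n$ to pass from the failure of $p^r$-divisibility to the failure of $p^n$-divisibility.
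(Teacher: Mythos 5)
Your proof is correct and is essentially the paper's own (implicit) argument: the paper states the corollary as an immediate consequence of Lemma~\ref{ty5}, and the intended justification is precisely your translation via $|\mathrm{GL}(m,q)| = q^{m(m-1)/2}\prod_{i=1}^m(q^i-1)$, using $p\neq q$ to discard the $q$-part and $r\le n$ to pass from $p^r\nmid\prod_{i=1}^m(q^i-1)$ to $|G|=p^n\nmid|\mathrm{GL}(m,q)|$.
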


For convenience, we call a section of a group $G$ that is a $Frobenius$ group as a section of $Frobenius$ type.

\begin{lemma}\label{ty61}Let $G$ be a finite group, $p, q\in \pi(G)$. If $|G_q|=q^n$, and $p\nmid |\rm{GL}(n,q)|,$ then $G$ has no section of $Frobenius $ type $H\ltimes K$ such that  $p\big||H|$ and $|K_q|=q^r$, where $r\leq n$. Furthermore, if $G$ does have a section of  $Frobenius $ type $H\ltimes K$ such that $pq\big||HK|$ and $p\big||H|$, then $(q, |K|)=1$ and $pq\big||H|$.\end{lemma}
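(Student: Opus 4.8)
The plan is to establish the first (negative) assertion by contradiction, invoking the coprime-action lemma \ref{ty5}, and then to obtain the ``Furthermore'' part as a short divisibility argument. Throughout I treat $p,q$ as distinct primes, as is implicit when one speaks of the edge $pq$ in the prime graph. Suppose, for contradiction, that $G$ had a section of Frobenius type $H\ltimes K$ --- where, by the paper's convention, $K$ is the normal Frobenius kernel and $H$ the complement --- with $p\mid|H|$ and $q\mid|K|$ (equivalently $|K_q|=q^r$ with $r\geq 1$; note that $r\leq n$ is automatic, since the order of any $q$-subgroup of a section of $G$ divides $|G|_q=q^n$). First I would record that a Frobenius kernel is nilpotent (Thompson's theorem, also contained in Lemma \ref{ty2}), so the Sylow $q$-subgroup $K_q$ is characteristic in $K$ and hence normal in $H\ltimes K$.

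Next I would choose $P\in\mathrm{Syl}_p(H)$; since $p\mid|H|$ we have $P\neq 1$, and as $P\leq H$ normalizes the normal subgroup $K_q$ with $P\cap K_q=1$ (coprime orders, $p\neq q$), the product $P\ltimes K_q$ is a subgroup of $H\ltimes K$ realizing an action of the $p$-group $P$ on the $q$-group $K_q$ of order $q^r$. The key observation is that a Frobenius group with kernel $K$ and complement $H$ satisfies $\pi_e(H\ltimes K)\subseteq\{d:d\mid|H|\}\cup\{d:d\mid|K|\}$ with $(|H|,|K|)=1$; since $p\mid|H|$ and $q\mid|K|$, no element of order $pq$ can occur, so $pq\notin\pi_e(H\ltimes K)$ and a fortiori $pq\notin\pi_e(P\ltimes K_q)$.

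Now I would apply Lemma \ref{ty5} to the action of $P$ on $K_q$: its contrapositive gives that $pq\notin\pi_e(P\ltimes K_q)$ forces $|P|\,\big|\,|\mathrm{GL}(r,q)|$, whence $p\mid|\mathrm{GL}(r,q)|$. Since $r\leq n$, comparing the factorizations $|\mathrm{GL}(m,q)|=q^{m(m-1)/2}\prod_{i=1}^m(q^i-1)$ shows $|\mathrm{GL}(r,q)|\,\big|\,|\mathrm{GL}(n,q)|$, so $p\mid|\mathrm{GL}(n,q)|$, contradicting the hypothesis. This proves the first assertion.

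For the ``Furthermore'' part, assume $G$ has a Frobenius-type section $H\ltimes K$ with $pq\mid|HK|$ and $p\mid|H|$. If $q\mid|K|$, then $|K_q|=q^r$ with $r\geq 1$ and $p\mid|H|$, which is exactly the situation just excluded; hence $(q,|K|)=1$. Finally, from $|HK|=|H|\,|K|$, $q\mid|HK|$ and $q\nmid|K|$ we get $q\mid|H|$, and together with $p\mid|H|$ and $p\neq q$ this yields $pq\mid|H|$, completing the argument. The only place that demands genuine care is the reduction to the coprime-action lemma: arranging $K_q\unlhd H\ltimes K$ via nilpotency of the kernel, and confirming that the absence of elements of order $pq$ in the Frobenius section is inherited by the subgroup $P\ltimes K_q$. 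Once this setup is in place, Lemma \ref{ty5} performs the arithmetic and the contradiction is immediate.
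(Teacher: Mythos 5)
Your proof is correct and follows essentially the same route as the paper's: pass to the section $H_p\ltimes K_q$ (normality of $K_q$ coming from nilpotency of the Frobenius kernel), note that a Frobenius group has no element of order $pq$, and invoke Lemma \ref{ty5} together with $|\mathrm{GL}(r,q)|\,\big|\,|\mathrm{GL}(n,q)|$ to force the contradiction, with the ``Furthermore'' part as an immediate consequence. The paper phrases the final step as producing an element of order $pq$ while you use the contrapositive of Lemma \ref{ty5}, but this is the same argument; your write-up merely fills in details (normality of $K_q$, the element-order structure of Frobenius groups) that the paper leaves implicit.
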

\begin{proof} If $G$ has a section of $Frobenius $ type $H\ltimes K$ such that $p\big||H|$ and $|K_q|=q^r$,  then $H_p\ltimes K_q$ is a $Frobenius$ group and  $H_p\ltimes K_q$  has no element of order $pq$. But under the assumption $p\nmid |\rm{GL}(n,q)|,$ we have $|H_p|\nmid|\rm{GL}(n,q)|$, since $|\rm{GL}(r, q)|\big||\rm{GL}(n,q)|$, it follows that $|H_p|\nmid |\rm{GL}(r,q)|$, thus, it follows by Lemma \ref{ty5} that $H_p\ltimes K_q$ has an element of order $pq$, a contradiction. The first part of the lemma follows. And the second part follows straightforward from the first part. \end{proof}

\textbf{Remark.} Let $G$ be a finite group satisfying the hypothesis of the Main Theorem, then by Lemma \ref{ty1}, $G$ may be a $Frobenius$ group or a $2-Frobenius$ group. In order to prove the Main Theorem, we first show that $G$ cannot be  a $Frobenius$ group or a $2-Frobenius$ group, the proof will be separated into six lemmas. In the proofs of next six lemmas, when we mention  a $Frobenius$ group or a $2-Frobenius$ group, we are referring groups and notations in Lemma \ref{ty2} and Lemma \ref{ty3} without explanations. Moreover, we shall frequently use Lemma \ref{ty61} to get information of $\pi(H)$ or come to a contradiction.

\subsection{To prove that $G$ is neither a $Frobenius$ group nor a $2-$$Frobenius$ group}
Now we start to show that $G$ is neither a $Frobenius$ group nor a $2-$$Frobenius$ group.
\begin{lemma}\label{ty6}Let $G$ be a group and $S$ a Mathieu group. If $|G|=|S|$, then $G$ is neither a $Frobenius$ group nor a $2-$$Frobenius$ group. \end{lemma}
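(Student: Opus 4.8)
The plan is to assume, for contradiction, that $G$ is a $Frobenius$ group or a $2$-$Frobenius$ group, and to derive a numerical contradiction with $|G|=|S|$ for each of the five Mathieu orders $|M_{11}|,|M_{12}|,|M_{22}|,|M_{23}|,|M_{24}|$. In either case the structure theorems (Lemma \ref{ty2} and Lemma \ref{ty3}) split $\pi(G)$ into the components $\pi_1$ and $\pi_2$, and since the two constituent orders are coprime, every Sylow subgroup of $G$ lies wholly on one side; hence a choice of the component $\pi_2$ already determines all the relevant orders. There are only finitely many such choices, so once the right structural constraints are in place the argument reduces to a finite computation.

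First I would treat the $Frobenius$ case $G=K\rtimes H$, with $K$ the nilpotent kernel and $H$ the complement, so that $(|K|,|H|)=1$, $|H|\big||K|-1$ and $\pi(G)=\pi(K)\cup\pi(H)$ (a disjoint union) by Lemma \ref{ty2}. For any $p\in\pi(H)$ and $q\in\pi(K)$, the subgroup $H_p\ltimes K_q$ is a section of $Frobenius$ type containing the full Sylow $q$-subgroup of $G$, and has no element of order $pq$; so Corollary \ref{cor51} (equivalently Lemma \ref{ty61}) forces $p\big|\prod_{i=1}^{n_q}(q^i-1)$, where $q^{n_q}=|G|_q$. Applying this to the largest prime $r\in\pi(G)$ (namely $r=11$ for $M_{11},M_{12},M_{22}$ and $r=23$ for $M_{23},M_{24}$) shows $r\nin\pi(H)$, because over the admissible kernel primes $q$ the products $\prod_{i=1}^{n_q}(q^i-1)$ are never divisible by $r$. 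Hence $r\in\pi(K)$, and the same criterion with $q=r$ confines $\pi(H)$ to the very few primes dividing $r-1$. This leaves only a handful of candidate pairs $(|K|,|H|)$, each of which I would eliminate by checking that $|H|\nmid|K|-1$.

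For the $2$-$Frobenius$ case I would use the decomposition $G=DEF$ of Lemma \ref{ty3}, where $DE$ and $EF$ are $Frobenius$ with kernels $D$ and $E$; thus $E$ is nilpotent, $|F|\big||E|-1$, $|E|\big||D|-1$, and $\pi_2=\pi(E)$, $\pi_1=\pi(D)\cup\pi(F)$ with $2\in\pi_1$. Consequently $E$ has odd order, and $|E|\big||D|-1$ together with $|G|=|D||E||F|$ forces $|E|<\sqrt{|G|}$, restricting $\pi(E)$ to small sets of odd primes. Applying Lemma \ref{ty61} to the two $Frobenius$ sections $DE$ and $EF$ exactly as above again pins down where the largest prime can sit and severely limits $\pi(E)$ and $\pi(F)$; the surviving possibilities are then killed by the congruences $|D|\equiv 1 \ (\mathrm{mod}\ |E|)$ and $|E|\equiv 1 \ (\mathrm{mod}\ |F|)$, which fail numerically. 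The main obstacle is precisely this $2$-$Frobenius$ analysis: it carries three interdependent factors $|D|,|E|,|F|$ rather than two, so the bookkeeping is heavier, and for $M_{23}$ and $M_{24}$ the larger prime sets generate more partitions to rule out. The lever that keeps everything finite is the observation that the largest prime divisor of $|S|$ is forced into a nilpotent kernel, after which $r-1$ (with $r\in\{11,23\}$, so $r-1\in\{10,22\}$) has very few prime factors and the divisibility conditions collapse the remaining cases.
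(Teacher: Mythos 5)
Your Frobenius-case argument is sound: forcing the largest prime $r\in\{11,23\}$ into the kernel via Lemma \ref{ty61}, confining $\pi(H)$ to divisors of $r-1$, and then checking $|H|\nmid |K|-1$ does eliminate every resulting split for all five Mathieu orders (I verified the finitely many cases; e.g.\ for $|M_{11}|$ the three splits give $|K|-1=98,\,494,\,1583$, none admitting $|H|=80,\,16,\,5$). This finish differs slightly from the paper, which instead kills the Frobenius case with an element-order argument ($5\nmid|\mathrm{GL}(2,3)|$ gives an element of order $15$), but your congruence route is equally valid there.

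The genuine gap is in your $2$-Frobenius analysis: the claim that the surviving configurations ``are then killed by the congruences $|D|\equiv 1 \pmod{|E|}$ and $|E|\equiv 1 \pmod{|F|}$, which fail numerically'' is false for four of the five groups. Concretely, for $|G|=|M_{11}|=2^4\cdot3^2\cdot5\cdot11$, your constraints force $11\in\pi(D)$, $|E|=5$, $|F|\big|4$; but the configuration $|D|=2^2\cdot3^2\cdot11=396$, $|E|=5$, $|F|=4$ satisfies \emph{both} congruences, since $|D|-1=395=5\cdot79$ and $5\equiv 1\pmod 4$. Likewise $|D|=2^5\cdot3^2\cdot7\cdot11$, $|E|=5$, $|F|=4$ survives for $|M_{22}|$ (here $|D|-1=22175=5^2\cdot 887$), $|D|=2^7\cdot3^2\cdot7\cdot23$, $|E|=11$, $|F|=5$ survives for $|M_{23}|$, and $|D|=2^9\cdot3^3\cdot7\cdot23$, $|E|=11$, $|F|=10$ survives for $|M_{24}|$; only $|M_{12}|$ is fully disposed of by congruences. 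To kill these survivors you must apply the fixed-point-free-action criterion (Lemma \ref{ty5}/Lemma \ref{ty61}) \emph{once more inside the section} $DE$: the primes left over after fixing $|E|$ and $|F|$ are forced into $\pi(D)$, and the complement $E$ cannot act fixed-point-freely on the corresponding Sylow subgroup of $D$ --- for $M_{11},M_{12}$ because $5\nmid|\mathrm{GL}(2,3)|$ (so $15\in\pi_e(DE)$), and for $M_{22},M_{23},M_{24}$ because $|D_7|=7$ and $|E|\in\{5,11\}$ does not divide $7-1=6$. This is exactly the step the paper's proof performs, and without it (or some equivalent) your $2$-Frobenius case does not close.
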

\begin{proof}  From  \cite{ATLAS}, we get  the orders of Mathieu groups are the following:

$|M_{11}|=2^4\cdot 3^2\cdot5\cdot11$,
$|M_{12}|=2^6\cdot 3^3\cdot5\cdot11$,
$|M_{22}|=2^7\cdot 3^2\cdot5\cdot7\cdot11$,
$|M_{23}|=2^7\cdot 3^2\cdot5\cdot7\cdot11\cdot23$,
$|M_{24}|=2^{10}\cdot 3^3\cdot5\cdot7\cdot11\cdot23$.

Let $U\ltimes V$ be a $Hall$ subgroup of $G$ with  $11\big||UV|$ or $23\big||UV|$, which is a $Frobenius$ group with $V$ the $Frobenius$ kernel and $U$ the $Frobenius$ complement.
  Let $S$ be one of $M_{11}$, $M_{12}$ and $M_{22}$. Since $11\nmid |\rm{GL}(7, 2)|$, $11\nmid |\rm{GL}(3, 3)|$ and $5,7<11$, we have by Lemma \ref{ty61} that $11\in \pi(V)$. Thus, $|V_{11}|=11$. Since $V_{11}U$ is a $Frobenius$ group, we deduce that $|U|\big|10$. Noticing $2^4\big||G|$, we conclude that $|U|=5$. If $G$ is a $2-Frobenius$ group, then  $11\in\pi(D)$ for both $E$ and $F$ are two $Frobenius$ complements. Surely,  $|D_{11}|=11$. Thus, $|E|=5$, and hence, $|F|\big|4$. Therefore, $3^2\big||D|$, it follows that $15\in\pi_e(DE)$, a contradiction. If $G$ is a $Frobenius$ group, then $G=UV$ and  $3^2\big||V|$, but $5\nmid |\rm{GL(2,3)}|$, so $G$ has an element of order $15$ by Lemma \ref{ty5}, a contradiction.

 Let $S$ be one of $M_{23}$ or $M_{24}$. We consider the prime $23$. Since $23\nmid |\rm{GL}(10,2)|$, $23\nmid |\rm{GL}(3,3)|$, and $5,7,11<23$, in the same reason we have that $23\in\pi(V)$. Now $|V_{23}|=23$,  it follows that $|U|=11$ by Corollary \ref{cor51}. Suppose that  $G$ is a $2-Frobenius$ group, then  $23\in\pi(D)$, thus,  $|E|=11$, therefore, $|F|\big||10|$. Consequently, $7\in\pi(D)$ and $|D_7|=7$, which is impossible since $D_7E$ cannot be a  $Frobenius$ group. If $G$ is a $Frobenius$ group,  then $G=UV$. Hence, $UV_7$ is a $Frobenius$ group, contradicting to Lemma \ref{ty61}. This concludes the lemma.

  \end{proof}

\begin{lemma} \label{ty7} Let $G$ be a group and $S$ a Janko group. If $|G|=|S|$, then $G$ is neither a $Frobenius$ group nor a $2-$$Frobenius$ group.
\end{lemma}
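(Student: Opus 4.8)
The plan is to follow exactly the template established in Lemma~\ref{ty6}, treating each Janko group in turn according to its order. First I would record from \cite{ATLAS} the orders of the Janko groups, namely $|J_1|=2^3\cdot3\cdot5\cdot7\cdot11\cdot19$, $|J_2|=2^7\cdot3^3\cdot5^2\cdot7$, $|J_3|=2^7\cdot3^5\cdot5\cdot17\cdot19$, and $|J_4|=2^{21}\cdot3^3\cdot5\cdot7\cdot11^3\cdot23\cdot29\cdot31\cdot37\cdot43$. For each case I assume, toward a contradiction, that $G$ is a $Frobenius$ or $2$-$Frobenius$ group and invoke Lemma~\ref{ty2} and Lemma~\ref{ty3} for the structural notation $U\ltimes V$, respectively $G=DEF$.

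The core of the argument in each case is to locate the largest prime $p$ in $\pi(G)$ inside the $Frobenius$ kernel. For this I would apply Lemma~\ref{ty61}: if $p$ is large relative to the $2$-part and $3$-part exponents, then $p\nmid|\mathrm{GL}(n,2)|$ and $p\nmid|\mathrm{GL}(m,3)|$ for the relevant exponents $n,m$, and every other prime is smaller than $p$, so $p$ cannot divide $|U|$ (respectively $|F|$ or $|E|$ as a complement). This forces $p\in\pi(V)$ (respectively $p\in\pi(D)$) with $|V_p|=p$, whence the $Frobenius$ condition $|U|\big||V_p|-1=p-1$ (respectively $|E|\big|p-1$) pins down the complement tightly. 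For instance, for $J_1$ the prime $19$ gives $|U|\big|18=2\cdot3^2$; for $J_3$ the prime $19$ gives the same divisibility; for $J_4$ the prime $43$ gives $|U|\big|42=2\cdot3\cdot7$. I then push the remaining prime factors of $|G|$ into the kernel and extract a second $Frobenius$ section $H_{p'}\ltimes V_{q'}$ that violates Lemma~\ref{ty61}, or produce an element of order $p'q'$ via Lemma~\ref{ty5} or Corollary~\ref{cor51}, contradicting that a $Frobenius$ complement-kernel pair has no such element.

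For the $2$-$Frobenius$ subcase I would use the fact that in $G=DEF$ both $E$ and $F$ act as $Frobenius$ complements (Lemma~\ref{ty3}), so the largest prime sits in $D$, the order of $E$ divides $p-1$, and $|F|$ divides $|E|-1$; chasing these divisibility constraints forces a small prime such as $3$ or $7$ into $D$ with cyclic Sylow subgroup, and then the corresponding section $D_{q'}E$ fails to be a $Frobenius$ group, again by Lemma~\ref{ty61}. The delicate case is $J_2$, whose order $2^7\cdot3^3\cdot5^2\cdot7$ contains no large isolated prime: here the prime graph of $J_2$ is disconnected with component $\{7\}$ (in fact $J_2$ has disconnected prime graph, which is what permits this lemma to be non-vacuous), so I would run the same analysis with $p=7$, noting $7\nmid|\mathrm{GL}(r,5)|$ for the relevant $r$ and $7\nmid|\mathrm{GL}(m,3)|$, to force $7\in\pi(V)$, $|V_7|=7$, hence $|U|\big|6$, and then derive an element of order $15$ or a forbidden $Frobenius$ section from the surviving $3$- and $5$-parts.

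The main obstacle I expect is precisely $J_2$: with only four primes and no dominant one, the kernel-complement bookkeeping is tighter, and one must check the $\mathrm{GL}$-divisibility conditions of Lemma~\ref{ty61} carefully for several prime pairs rather than relying on a single huge prime to do all the work. The remaining three groups each possess a conspicuous large prime ($19$, $19$, $43$) that makes the forced placement into the kernel essentially immediate, so the bulk of the labor is the routine divisibility arithmetic already illustrated in Lemma~\ref{ty6}.
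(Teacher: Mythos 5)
Your outline is sound for $J_1$ and $J_3$, and there it matches the paper's Step 1: since $19\nmid|\mathrm{GL}(7,2)|$ and $19\nmid|\mathrm{GL}(5,3)|$ (and all other primes occur to the first power and are smaller than $19$), Lemma \ref{ty61} really does force $19$ into the kernel, and the divisibility chase you describe ($|E|\big|9$, $|F|\big|8$, then $5$ lands in $D$ and $ED_5$ cannot be Frobenius) is essentially the paper's argument.

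The gap is in $J_2$ and $J_4$, where your central claim --- that the isolated prime can be forced into the Frobenius kernel --- is false. Lemma \ref{ty61} only excludes $p$ from a complement if $p\nmid|\mathrm{GL}(n,q)|$ for \emph{every} prime $q$ available to the kernel, including $q=2$. For $J_2$ you check $\mathrm{GL}(m,3)$ and $\mathrm{GL}(r,5)$ but not $\mathrm{GL}(n,2)$: since $7=2^3-1$, we have $7\big||\mathrm{GL}(n,2)|$ for all $n\geq 3$, and $|J_2|_2=2^7$; indeed the Frobenius group $2^3\rtimes 7$ exists, so no lemma can push $7$ out of a complement acting on a $2$-group kernel. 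The same failure occurs for $J_4$: $2^7\equiv-1\pmod{43}$, so $43\big|2^{14}-1\big||\mathrm{GL}(21,2)|$, and $|J_4|_2=2^{21}$. This is precisely why the paper's conditions (1) in its Steps 2 and 3 conclude only that ``if $p$ divides the complement order, then the kernel is a $2$-group,'' not that $p$ lies in the kernel. The configurations you thereby skip --- e.g.\ for $J_2$ Frobenius with $K=2^7$ and $H$ of order $3^3\cdot5^2\cdot7$, killed by $|H|\big|(|K|-1)=127$; the parallel $2$-Frobenius cases with $7\in\pi(E)$ or $7\in\pi(F)$; and for $J_4$ the case $43\in\pi(H)$, $|K|=2^{21}$, killed again by $|H|\big|(|K|-1)$ --- are not corner cases but the bulk of the paper's proof for these two groups, and your remark that the large prime $43$ makes its placement in the kernel ``essentially immediate'' is incorrect. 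As written, the proposal does not prove the lemma for $J_2$ or $J_4$.
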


\begin{proof} \textbf{Step 1} To show the lemma follows while $|G|=|J_1|$ or $|J_3|$.

Noticing $|J_1|=2^3\cdot3\cdot5\cdot7\cdot11\cdot19$,   $|J_3|=2^7\cdot3^5\cdot5\cdot17\cdot19$, $19\nmid |GL(7,2)|$ and $19\nmid |GL(5,3)|$, we get by Lemma \ref{ty61} that $G$ has no subgroups which is a $Frobenius$ group with complement of order divided by $19$  if $|G|=|J_1|$ or $|J_3|$.  Hence, if $G$ is a $2-Frobenius$ group, then $19\in\pi(D)$, $|E|\big|9$ for $|E|$ is odd, further, $|F|\big|8$ since $|F|\big|(|E|-1)$. Therefore, $|D_5|=5$, which means that $ED_5$ is a $Frobenius$ group of order $15$ or $45$, it is impossible. If $G$ is a $Frobenius$ group,  then  $|H|\big|9$, $|K_5|=5$. Thus, $H_3K_5$ is a $Frobenius$ group,  which is impossible.

\textbf{Step 2} To show the lemma follows while $|G|=|J_2|$.

(1) Let  $U\ltimes V$ be a section of $Frobenius$ type of $G$. Notice $|G|=|J_2|=2^7\cdot3^3\cdot5^2\cdot7$. If $7\in \pi(V)$, then $UV_7$ is $Frobenius$ group, thus, $|U|\big| 6$. If $7\in\pi(U)$, by $7\nmid\rm{GL}(3,3)|$ and $7\nmid|\rm{GL}(2,5)|$, we get by Lemma \ref{ty61} that $V$ is $2-$group.

 (2) Assume that $G$ is a $Frobenius$ group. If $7\in\pi(K)$, then $|H|\big|6$ by (1), hence, $\pi(K)=\pi(G)$ and $t(G)=1$, a contradiction. If $7\in\pi(H)$, then $|K|=2^7$, $|H|=3^3\cdot5^2\cdot7$, contradicting  $|H|\big|(|K|-1)$.

  (3) Assume that $G$ is a $2-Frobenius$ group. If $7\big||D|$, then $|E|\big|6$ by (1), so $|F|\big|2$, thus, $\pi(D)=\pi(G)$ and $t(G)=1$, a contradiction. If $7\in\pi(E)$, then $D$ is a $2-$group by (1), so  $|F|\big|6$. Since $3^3\big||G|$,  we deduce that $|E|=3^3\cdot5^2\cdot7$, contradicting $|E|\big|(|D|-1)$. As a result, $7\in\pi(F)$, since $EF$ is a $Frobenius$ group,  (1) implies  that $E$ is a $2-$ group, a contradiction.

\textbf{Step 3} To show the lemma follows while   $|G|=|J_4|$.

(1) By $|G|=|J_4|=2^{21}\cdot3^3\cdot5\cdot7\cdot11^3\cdot23\cdot29\cdot31\cdot37\cdot43$, we get that $G$ has no nilpotent normal subgroup of order divided by $5$, otherwise, $t(G)=1$. Moreover, by $43\nmid\rm{GL}(3,3)|$ and $43\nmid\rm{GL}(3,11)|$, we get that if $U\ltimes V$ is a section of $Frobenius$ type with  $43\big||U|$, then $V$ is a $2-$group.

 (2) Assume that $G$ is a $Frobenius$ group. Then  $5\in\pi(H)$.   Moreover, if $43\in\pi(K),$ then Corollary \ref{cor51} indicates that $43\cdot5\in\pi_e(G)$, this is impossible. Hence, $43\in\pi(H)$, so $|K|=2^{21}$ by (1), thus,  $|H|=3^3\cdot5\cdot7\cdot11^3\cdot23\cdot29\cdot31\cdot37\cdot43$, contradicting $|H|\big|(|K|-1)$.

 (3) Assume that $G$ is a $2-Frobenius$ group, then  $5\notin\pi(D)$ by (1). Moreover,   if  $43\in\pi(D)$, then $\pi(E)\subseteq\{3, 7\}$, and thus, $5\in\pi(F)$. But by Lemma \ref{ty61}, $EF_5$ cannot be a $Frobenius$ group, a contradiction. If $43\in\pi(E)$, then  $D$ can only  be a $2-$group by (1). At this moment, $F$ is a $\{3, 7\}-$group for $E_{43}F$ is a $Frobenius$ group. Hence, $5\in\pi(E)$ and $FE_5$ is a $Frobenius$ group, which is impossible for $\pi(F)\subseteq\{3,7\}$, a contradiction. Now $43\in\pi(F)$, (1) indicates that $E$ is a $2-$group, a contradiction.

 The lemma follows from Steps 1-3.
\end{proof}

\begin{lemma} \label{ty8} Let $G$ be a group and $S$ a Conway group. If $|G|=|S|$, then $G$ is neither a $Frobenius$ group nor a $2-$$Frobenius$ group.
\end{lemma}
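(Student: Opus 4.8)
The plan is to follow the template of Lemmas \ref{ty6} and \ref{ty7}, treating the three Conway groups $Co_1$, $Co_2$, $Co_3$ with orders $|Co_1|=2^{21}\cdot3^9\cdot5^4\cdot7^2\cdot11\cdot13\cdot23$, $|Co_2|=2^{18}\cdot3^6\cdot5^3\cdot7\cdot11\cdot23$ and $|Co_3|=2^{10}\cdot3^7\cdot5^3\cdot7\cdot11\cdot23$. In each case I single out the largest prime $23$ and record the multiplicative orders modulo $23$: the primes $2,3,13$ have order $11$ while $5,7,11$ have order $22$. Consequently $23\mid|\mathrm{GL}(n,q)|$ can occur only when $q=2$ and $n\geq 11$, which among the relevant exponents happens only for the $2$-part of $Co_1$ ($n=21$) and of $Co_2$ ($n=18$), never for $Co_3$ ($n=10$) nor for any odd prime. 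I will also use the factorizations $2^{10}-1=3\cdot11\cdot31$, $2^{18}-1=3^3\cdot7\cdot19\cdot73$, $2^{21}-1=7^2\cdot127\cdot337$ and $2^{11}-1=23\cdot89$.

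Assume first that $G=H\ltimes V$ is a $Frobenius$ group. If $23\in\pi(H)$, then Lemma \ref{ty61} together with the order computations above forces every odd prime out of the kernel, so $V$ is a $2$-group; by coprimality $V$ is the full Sylow $2$-subgroup and $|H|\mid|V|-1=2^n-1$. Since $2^n-1$ (for $n=18,21$) is divisible by none of $5,11,23$, this contradicts $5\cdot23\mid|H|$, while for $Co_3$ the kernel is even forced to be trivial. Hence $23\in\pi(V)$, so $|V_{23}|=23$ and the action on $V_{23}$ forces $|H|\mid 22$; as $|H|\mid 22$ cannot absorb the whole $2$-part of $|G|$, we get $2\in\pi(V)$, whence $\pi(H)=\{11\}$ and $|H|=11$. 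But then $7\in\pi(V)$ while $11\nmid|\mathrm{GL}(n_7,7)|$, and Lemma \ref{ty61} yields the final contradiction.

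Assume next that $G=DEF$ is a $2$-$Frobenius$ group with the notation of Lemma \ref{ty3}, so $\pi_1=\pi(D)\cup\pi(F)$, $\pi_2=\pi(E)$ and $2\in\pi_1$; I split according to the position of $23$. If $23\in\pi(D)$, then the complement $E$ of $DE$ acts fixed-point-freely on $D_{23}$, so $|E|\mid 22$, and since $2\notin\pi(E)$ this gives $E=C_{11}$; then $|F|\mid|E|-1=10$, which forces $7\in\pi(D)$, and $C_{11}$ must centralize $D_7\neq1$ because $11\nmid|\mathrm{GL}(n_7,7)|$, contradicting the $Frobenius$ action (Lemma \ref{ty61}). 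If $23\in\pi(F)$, then $23$ divides the complement of the $Frobenius$ group $EF$, forcing $E$ to be a $2$-group by the argument of the previous paragraph; since $2\notin\pi(E)$ this forces $E=1$, a contradiction. Finally, if $23\in\pi(E)$, then $23$ divides the complement $E$ of $DE$, so $D$ is a $2$-group $2^k$ and $23\mid|E|\mid 2^k-1$; as $\mathrm{ord}_{23}(2)=11$ this forces $k=11$, and from $2^{11}-1=23\cdot89$ with $89\notin\pi(G)$ we obtain $E=C_{23}$, whence $F$ is the complement of $EF$ with kernel $C_{23}$ and $|F|\mid 22$, incompatible with $|G|=2^{k}\cdot23\cdot|F|$. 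This settles the $2$-$Frobenius$ case.

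The routine but essential inputs are the table of multiplicative orders modulo $23$ and the four factorizations above; once these are in hand, Lemma \ref{ty61} and Corollary \ref{cor51} drive every subcase. I expect the main obstacle to be the middle-factor subcase $23\in\pi(E)$ of the $2$-$Frobenius$ analysis, where one must pin down $E$ exactly from $2^{11}-1=23\cdot89$ and the absence of the prime $89$ from $\pi(G)$; the $Frobenius$-complement subcases, where $23$ could a priori act fixed-point-freely on a large $2$-group, are handled uniformly by the failure of $2^n-1$ to contain the odd primes dividing the complement. Collecting the contradictions in all cases shows that $G$ is neither a $Frobenius$ nor a $2$-$Frobenius$ group.
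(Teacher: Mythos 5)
Your proof is correct, and its skeleton is the paper's: split into the Frobenius and $2$-Frobenius cases, locate the prime $23$, use Lemma \ref{ty61} together with the fact that $23\nmid|\mathrm{GL}(n_q,q)|$ for every odd $q\in\pi(G)$ to force the relevant kernel to be a $2$-group, and then exploit the divisibility $|H|\mid|K|-1$ from Lemma \ref{ty2}. The subcases $23$ in the Frobenius kernel, $23$ in the Frobenius complement, $23\in\pi(D)$ and $23\in\pi(F)$ coincide step for step with the paper's argument (your explicit factorizations of $2^{18}-1$ and $2^{21}-1$ just make quantitative what the paper settles by a size comparison). The genuine difference is the $2$-Frobenius subcase $23\in\pi(E)$. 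There the paper argues positionally: from the fixed-point-free action of $F$ on $E_{23}$ it asserts $|F|=11$ and then derives a contradiction from $7\in\pi(E)$; but $|F|\mid 22$ also allows $|F|=2$ or $22$, and ruling those out needs an extra counting argument the paper never supplies. Your route avoids this entirely: Lemma \ref{ty2} applied to $DE$ gives $23\mid|E|\mid 2^{k}-1$, the relation $\mathrm{ord}_{23}(2)=11$ forces $k=11$ (already impossible for $Co_3$, whose $2$-part is $2^{10}$), and $2^{11}-1=23\cdot 89$ with $89\notin\pi(G)$ pins $E=C_{23}$, after which $|G|=2^{k}\cdot 23\cdot|F|$ with $|F|\mid 22$ is absurd since $3\mid|G|$. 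This is airtight and arguably repairs a small gap in the published proof; the price is carrying the explicit number theory (multiplicative orders modulo $23$ and the factorization $2^{11}-1=23\cdot89$) that the paper's positional argument was meant to sidestep.
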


\begin{proof}  From  \cite{ATLAS}, we get  the orders of Conway groups are the following:
 $$|Co_{1}|=2^{21}\cdot3^9\cdot5^4\cdot7^2\cdot11\cdot13\cdot23,
\ |Co_{2}|=2^{18}\cdot3^6\cdot5^3\cdot7\cdot11\cdot23,\
 |Co_3|=2^{10}\cdot3^7\cdot5^3\cdot7\cdot11\cdot23.$$

(1) Now $2^{10}\cdot 3^6\cdot5^3$ divides $|G|$. Noticing  $|\rm{GL}(2,7)|=2^5\cdot3^2\cdot7$, we have that if $G$ has a nilpotent normal subgroup of order divided by $7$ or $11$, then $t(G)=1$ by Corollary \ref{cor51}. Hence, $G_7$ and $G_{11}$ are not normal in $G$.  Let $U\ltimes V$ be a section of $Frobenius$ type, then, by $23\nmid |\rm{GL}(9,3)|$,  $23\nmid |\rm{GL}(4,5)|$,  $23\nmid|\rm{GL}(2,7)|$ and $11,13<23$, we get that if $23\big||U|$ then $V$ is a $2-$group.

 (2) If $G$ is a $Frobenius$ group. Suppose that $23\in\pi(K)$, then $|H|=11$, so that $7\in\pi(K)$, contradicting (1). Consequently,  $23\in\pi(H)$ and $|K|=|G_2|$ by (1), a contradiction to $|H|\big|(|K|-1)$. Let $G$ be a $2-$$Frobenius$ group. Since $|E|$ is odd, we have $23\nmid|F|$ by (1).  If $23\in\pi(E)$, then $D$ is a $2-$group and $|F|=11$. Therefore, $7\in\pi(E)$, which yields $|F|$ is a $\{2,3\}-$group, a contradiction to $|F|=11$.  Hence, $23\in\pi(D)$, we deduce that $|E|=11$ and $|F|\big|10$. Thus, $7\in\pi(D)$, contradicting (1). This  concludes the lemma.

\end{proof}

\begin{lemma} \label{ty9} Let $G$ be a group and $S$ a Fischer group. If $|G|=|S|$, then $G$ is neither a $Frobenius$ group nor a $2-$$Frobenius$ group. \end{lemma}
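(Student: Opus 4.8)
The plan is to follow the template of Lemmas~\ref{ty6}--\ref{ty8}, but to organize the whole argument around the prime $5$ rather than the largest prime. From \cite{ATLAS} the relevant orders are $|Fi_{22}|=2^{17}\cdot3^9\cdot5^2\cdot7\cdot11\cdot13$, $|Fi_{23}|=2^{18}\cdot3^{13}\cdot5^2\cdot7\cdot11\cdot13\cdot17\cdot23$ and $|Fi_{24}'|=2^{21}\cdot3^{16}\cdot5^2\cdot7^3\cdot11\cdot13\cdot17\cdot23\cdot29$. The reason for not using the largest prime is that, while $29\nmid|\mathrm{GL}(16,3)|$ so the ``kernel is a $2$-group'' step of the previous lemmas does apply to $Fi_{24}'$, the largest primes $13$ and $23$ of $Fi_{22}$ and $Fi_{23}$ divide $|\mathrm{GL}(9,3)|$ and $|\mathrm{GL}(13,3)|$ (the multiplicative orders of $3$ modulo $13$ and $23$ being only $3$ and $11$), so that step breaks down. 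I exploit instead that $5^2$ exactly divides every Fischer order and that $5-1=2^2$, $5^2-1=2^3\cdot3$ involve only $2$ and $3$. The key arithmetic facts are: the multiplicative order of both $2$ and $3$ modulo $25$ equals $20$; no prime $q\neq 5$ dividing any $|S|$ is $\equiv 1\pmod{25}$; and the only Sylow rank reaching $20$ among all three groups is the $2$-rank $21$ of $Fi_{24}'$.

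First I would record a normal-subgroup step: if $G$ has a normal nilpotent $N$ with $5\mid|N|$, then $N_5\unlhd G$ with $|N_5|\in\{5,25\}$, and since $\prod_{i=1}^{2}(5^i-1)=2^5\cdot3$, Corollary~\ref{cor51} yields $5s\in\pi_e(G)$ for every $s\in\pi(G)\setminus\{5\}$ (take $r=6$ for $s=2$, $r=2$ for $s=3$, $r=1$ otherwise), so $\Gamma(G)$ is connected, a contradiction. Next I would isolate the representation fact that drives everything: a Frobenius complement has cyclic Sylow subgroups at odd primes, so any Frobenius complement of order divisible by $25$ contains a $C_{25}$, and a faithful $\mathbb{F}_q$-module of $C_{25}$ ($q\neq5$) has dimension at least the order of $q$ modulo $25$ (an element of order $25$ in $\mathrm{GL}(d,q)$ must have a primitive $25$-th root of unity as an eigenvalue, for otherwise its fifth power would be a nontrivial unipotent element of order dividing $5$, impossible in characteristic $q$). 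Hence whenever $C_{25}$ acts fixed-point-freely on a nilpotent group $K$, it acts faithfully on each Frattini quotient $K_q/\Phi(K_q)$, forcing $\mathrm{rank}(K_q)\geq\mathrm{ord}_{25}(q)$ for every $q\in\pi(K)$.

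For the Frobenius case $G=H\ltimes K$, Lemma~\ref{ty2} makes $K$ nilpotent and normal, so $5\notin\pi(K)$ by the first step, whence $25\mid|H|$ and $C_{25}\leq H$ acts fixed-point-freely on $K$. For $Fi_{22}$ and $Fi_{23}$ every Sylow rank is below $20$ and below $\mathrm{ord}_{25}(q)$ for the small primes, so $K=1$, a contradiction. For $Fi_{24}'$ only $q=2$ can survive, forcing $K=G_2$ of order $2^{21}$ and $|H|$ odd with $3^{16}\mid|H|$; but $|H|\mid|K|-1=2^{21}-1=7^2\cdot127\cdot337$ and $3\nmid 2^{21}-1$, a contradiction. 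For the $2$-Frobenius case $G=DEF$ of Lemma~\ref{ty3}, $D$ is nilpotent and normal so $5\notin\pi(D)$, and $5$ lies in exactly one of $\pi(E),\pi(F)$. If $5\in\pi(F)$, then $C_{25}\leq F$ acts fixed-point-freely on the nilpotent $E$; since $2\in\pi_1=\pi(D)\cup\pi(F)$ we have $2\notin\pi(E)$, and no remaining prime has sufficient rank, so $E=1$, impossible. If $5\in\pi(E)$, then $C_{25}\leq E$ acts fixed-point-freely on $D$; for $Fi_{22},Fi_{23}$ this gives $D=1$, while for $Fi_{24}'$ it forces $D=G_2$ of order $2^a$. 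Then $25\mid|E|\mid 2^a-1$ gives $20\mid a$, so $a=20$ and $|E|\mid\gcd(2^{20}-1,|S|)=3\cdot5^2\cdot11$; but $7,13,17,23,29\in\pi(F)$ while $|F|\mid|E|-1\leq 824$, which is far too small, a contradiction.

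The whole scheme rests on the coincidence $20=\mathrm{ord}_{25}(2)=\mathrm{ord}_{25}(3)$ exceeding every pertinent Sylow rank, and this succeeds cleanly for $Fi_{22}$ and $Fi_{23}$. The one genuine obstacle is the $2$-part of $Fi_{24}'$, whose rank $21\geq 20$ lets a $2$-group kernel slip past the fixed-point-free argument; I expect the delicate bookkeeping to lie entirely in closing that surviving case, which I dispose of through the order relation $|H|\mid|K|-1$ of Lemma~\ref{ty2} together with the factorizations $2^{21}-1=7^2\cdot127\cdot337$ and $2^{20}-1=3\cdot5^2\cdot11\cdot31\cdot41$.
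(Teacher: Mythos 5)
Your proof is correct, but it is a genuinely different argument from the paper's. The paper organizes everything around the prime $13$: its step (1) uses Lemma~\ref{ty61} and Corollary~\ref{cor51} (via $13\nmid|\mathrm{GL}(2,5)|$, $13\nmid|\mathrm{GL}(3,7)|$) to show any Frobenius-type section with $13$ dividing the complement has a $\{2,3\}$-kernel, and then closes the Frobenius and $2$-Frobenius cases with ad hoc divisibility facts ($13\nmid 2^{18}-1,\ 2^{21}-1,\ 3^{13}-1,\ 3^{16}-1$; $5^2\nmid 3^t-1$; $3^9\nmid 2^k-1$), tracking separately whether $13$ lies in $\pi(E)$ or $\pi(F)$. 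You instead organize around $5^2\,\|\,|S|$ and the element of order $25$: since Frobenius complements have cyclic Sylow subgroups at odd primes, the relevant complement contains $C_{25}$, and your eigenvalue argument (an element of order $25$ in $\mathrm{GL}(d,q)$, $q\neq5$, is semisimple and must have a primitive $25$-th root of unity as eigenvalue) forces every kernel Sylow $q$-subgroup to have rank at least $\mathrm{ord}_{25}(q)$, which is $20$ for $q\in\{2,3,13,17,23\}$, $4$ for $7$, $5$ for $11$, $10$ for $29$. This single mechanism kills all three Fischer groups at once except the $2$-part of $Fi_{24}'$, which you finish with $2^{21}-1=7^2\cdot127\cdot337$ and $2^{20}-1=3\cdot5^2\cdot11\cdot31\cdot41$. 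What your route buys is uniformity and a much stronger rank bound than the paper's toolkit can give (Corollary~\ref{cor51} applied to a group of order $25$ acting on a $2$-group only forces rank $\geq 8$, since $5^2\mid\prod_{i=1}^{8}(2^i-1)$; the jump to $20$ genuinely needs cyclicity of the Sylow $5$-subgroup). What it costs is reliance on classical facts outside the paper's stated lemmas: the cyclic-Sylow structure of Frobenius complements and Burnside's theorem that a coprime action trivial on $K_q/\Phi(K_q)$ is trivial on $K_q$; both are standard and correctly applied. Two trivial slips: in the $2$-Frobenius case for $Fi_{24}'$ you write that the rank argument "forces $D=G_2$", but at that point $D$ is only known to be a $2$-group of order $2^a$, and your own conclusion $a=20$ shows $D$ is in fact a proper subgroup of the Sylow $2$-subgroup (your subsequent reasoning never uses $D=G_2$, so nothing breaks); and your parenthetical justification for bypassing the largest primes slightly misstates the paper, which also avoids a pure largest-prime argument for $Fi_{22}$ and $Fi_{23}$ for exactly the reason you give.
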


\begin{proof} From  \cite{ATLAS}, we get  the orders of Fischer groups are the following:
\begin{center}
 $|Fi_{22}|=2^{17}\cdot3^9\cdot5^2\cdot7\cdot11\cdot13$, $|Fi_{23}|=2^{18}\cdot3^{13}\cdot5^2\cdot7\cdot11\cdot13\cdot17\cdot23$, $|Fi_{24}^{\prime}|=2^{21}\cdot3^{16}\cdot5^2\cdot7^3\cdot11\cdot13\cdot17\cdot23\cdot29$.
 \end{center}

 (1) Similar to (1) of the proof of Lemma \ref{ty8}, and note that $|\rm{GL(3,7)}|=2^6\cdot3^4\cdot7^3\cdot19$, we can show that $G_p$ (if there is) is not normal in $G$, where $p=5, 7, 11, 13, 17$.  Let $U\ltimes V$ be a section of $Frobenius$ type of $G$, then by $\pi(\rm{GL}(2,5))=\{2,3,5\}$ and $\pi(\rm{GL}(3,7))=\{2,3,7,19\}$ and Corollary \ref{cor51}, we get that if $13\big||U|$, then $V$ is a $\{2,3\}-$group.

(2) Assume that $G$ is a $Frobenius$ group. If $|G|=|Fi_{22}|$, we have $5,7,11,13\in\pi(H)$, and $K$ is a $2-$group,  contradicting $|H|\big|(|K|-1)$. If $|G|=|Fi_{23}|$ or $|Fi^{\prime}_{24}|$, then we come to a contradiction by (1) and $13$ does not divide  $2^{18}-1$, $2^{21}-1$, $3^{13}-1$ and $3^{16}-1$.

 (3) Assume that $G$ is a $2-Frobenius$ group.
Then either $13\in\pi(E)$ or $13\in\pi(F)$ by (1). Suppose that $13\in\pi(E),$ since $E_{13}F$ is a $Frobenius$ group, it follows that $|F|\big|2^2\cdot3$, so that $11\in\pi(E)$ by (1), thus, $|F|=2$. Hence, $|D_2|=|G_2|/2$. Note that $D$ is a $\{2,3\}-$group, we have  $|E_5|=|G_5|=5^2$. Furthermore, by $5^2\nmid3^t-1,\ t=9,13,16 $, we deduce that $3\nmid|D|$.  Therefore, $D$ is a $2-$group. And since $DE_3$ is a $Frobenius$ group, we immediately  get a contradiction by $3^9\nmid 2^k-1$, where $k=16,17,20$.  As a result, $13\in\pi(F)$, then $E$ can only be a $3-$group. Which implies that $D$ is a $2-$group. By calculating $2^m-1$, $m\leq 21$, we get that $3^9\nmid 2^m-1$, a contradiction. This completes the proof of the lemma.

\end{proof}

\begin{lemma} \label{ty10} Let $G$ be a group and $S$ the  Monster group or the Baby group. If $|G|=|S|$, then $G$ is neither a $Frobenius$ group nor a $2-$$Frobenius$ group. \end{lemma}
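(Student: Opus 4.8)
The plan is to follow exactly the same template used in Lemmas~\ref{ty6}--\ref{ty9}, now applied to the Monster group $M$ and the Baby Monster group $B$. First I would record the orders from \cite{ATLAS}, namely
$$|B|=2^{41}\cdot3^{13}\cdot5^6\cdot7^2\cdot11\cdot13\cdot17\cdot19\cdot23\cdot31\cdot47,$$
$$|M|=2^{46}\cdot3^{20}\cdot5^9\cdot7^6\cdot11^2\cdot13^3\cdot17\cdot19\cdot23\cdot29\cdot31\cdot41\cdot47\cdot59\cdot71,$$
and then single out the largest prime in each case (namely $47$ for $B$ and $71$ for $M$) to play the role that $11$, $23$, $43$, and $13$ played in the earlier lemmas. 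The arithmetic backbone of the argument is Lemma~\ref{ty61} together with Corollary~\ref{cor51}: I would verify that the chosen large prime $\ell$ does not divide $|\mathrm{GL}(n,r)|=\prod_{i=1}^{n}(r^i-1)r^{\binom{n}{2}}$ for each prime power $r^n$ appearing as a Sylow subgroup order with $r<\ell$. Since $\ell$ is larger than every other prime and the exponents are bounded, a finite check of $\ell\nmid r^i-1$ for the relevant $i$ forces that in any section of Frobenius type $U\ltimes V$ with $\ell\mid|U|$, the kernel $V$ must be a $\{2\}$-group (or a small $\{2,3\}$-group), because $\ell$ cannot act fixed-point-freely on an odd-order or $3$-involving normal subgroup without producing an element of order $\ell r$.

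With that preparation, I would split into the two structural cases furnished by Lemma~\ref{ty1}. If $G$ is a Frobenius group with kernel $K$ and complement $H$ (Lemma~\ref{ty2}), then the divisibility $|H|\bigm||K|-1$ is the lever: placing $\ell$ in $K$ forces $|H|$ to equal a small factor of $\ell-1$, which then drags another prime into $K$ that cannot legitimately sit in a nilpotent kernel (it would make $t(G)=1$), while placing $\ell$ in $H$ forces $K$ to be a $2$-group and hence $|H|=|G|/|G_2|$, a number far too large to divide $|G_2|-1=2^{e}-1$. Either branch contradicts the Frobenius structure. If instead $G$ is a $2$-Frobenius group $G=DEF$ (Lemma~\ref{ty3}), I would use that $|E|$ is odd together with the two Frobenius quotients $DE$ and $EF$ to locate $\ell$: the prime $\ell$ must lie in exactly one of $\pi(D)$, $\pi(E)$, $\pi(F)$, and each placement is eliminated by the same $\mathrm{GL}$-divisibility obstruction combined with the constraints $|F|\bigm||E|-1$ and $|E|\bigm||D|-1$. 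As in Lemma~\ref{ty9}, whenever the argument pins $D$ or $E$ down to a $2$- or $\{2,3\}$-group, the final contradiction comes from checking that a large prime power such as $5^6$, $3^{20}$, or $7^6$ fails to divide $2^k-1$ or $3^k-1$ for the admissible exponents $k$.

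The proof is therefore a bookkeeping argument rather than a conceptual one, and I expect the only genuine obstacle to be the sheer size of the two orders: because $|B|$ and especially $|M|$ involve many primes with large exponents, a single large prime may not immediately force the kernel to be a $2$-group, so I may need a secondary large prime (for the Monster, candidates like $59$, $47$, or $41$) to finish off a stubborn case. The safeguard is that whenever one large prime $\ell$ is shown to force $V$ into a $2$-group, any odd prime power dividing $|G|$ that does not divide $2^{k}-1$ for $k\le v_2(|G|)$ yields an immediate contradiction; since $v_2(|B|)=41$ and $v_2(|M|)=46$, I would compute the multiplicative orders of the relevant odd primes modulo powers of $2$ (and of $2$ and $3$ modulo the prime powers in question) once and for all, and reuse them to kill every surviving branch. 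With these checks tabulated, both the Frobenius and $2$-Frobenius cases close exactly as in the preceding lemmas, and the lemma follows.
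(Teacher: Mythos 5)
Your overall architecture — the $\mathrm{GL}$-divisibility obstruction from Lemma~\ref{ty61} and Corollary~\ref{cor51}, then the constraints $|H|\mid|K|-1$, $|E|\mid|D|-1$, $|F|\mid|E|-1$ in the two structural cases — is exactly the paper's, and for the Baby Monster your choice $\ell=47$ coincides with the paper's and works. The genuine gap is your primary choice $\ell=71$ for the Monster, together with the assertion that ``since $\ell$ is larger than every other prime and the exponents are bounded,'' the checks $\ell\nmid r^i-1$ must succeed. That inference is false, and it fails precisely here: $5^5-1=3124=2^2\cdot11\cdot71$, so $71$ divides $|\mathrm{GL}(9,5)|$ and Lemma~\ref{ty61} gives you nothing against a $5$-group kernel (indeed the Frobenius group $C_5^{\,5}\rtimes C_{71}$ exists, so $71$ genuinely can act fixed-point-freely on a $5$-group). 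With $\ell=71$ a section $U\ltimes V$ with $71\mid|U|$ can only be forced to have $V$ a $\{2,5\}$-group, not a $2$-group, and this breaks your later steps as stated: in the $2$-Frobenius case with $71\in\pi(F)$, the kernel $E$ of $EF$ is odd and could be a $5$-group (in fact it would have to be the full Sylow $5$-subgroup, of order $5^9$, since $5\in\pi_2$ forces $5\nmid|D||F|$), so the intended contradiction ``$E$ is a $2$-group, contradicting $|E|$ odd'' evaporates; one must instead add the separate observation that $71\mid|F|\mid 5^9-1$ is impossible because the order of $5$ modulo $71$ is $5$ and $5\nmid 9$ — a step your plan does not contain.

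The paper sidesteps all of this by running the entire argument with $47$ for both $B$ and $M$: $47$ divides both orders, and $47\nmid|\mathrm{GL}(20,3)|,\ |\mathrm{GL}(9,5)|,\ |\mathrm{GL}(6,7)|,\ |\mathrm{GL}(2,11)|,\ |\mathrm{GL}(3,13)|$ (the multiplicative orders of $3,5,7$ modulo $47$ are $23,46,23$, all exceeding the relevant exponents), so the kernel really is forced to be a $2$-group, and the three placements of $47$ die exactly in the way you describe. Your fallback remark (switch to $59$, $47$, or $41$ for a ``stubborn case'') would indeed rescue the proof, since both $47$ and $59$ pass every check for $|M|$; but as written the proposal rests on an arithmetic principle — largest prime plus bounded exponents implies no fixed-point-free action — that is simply not true, and the verification you defer is exactly the one that fails.
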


\begin{proof} From  \cite{ATLAS}, the orders of the Monster group and the Baby group are the following:

$|M|$= $2^{46}\cdot3^{20}\cdot5^9\cdot7^6\cdot11^2\cdot13^3\cdot17\cdot19\cdot23\cdot29\cdot31\cdot41\cdot47\cdot59\cdot71$,
$|B|$= $2^{41}\cdot3^{13}\cdot5^6\cdot7^2\cdot11\cdot13\cdot17\cdot19\cdot23\cdot31\cdot47$.

(1) It can be easily shown that $t(G)=1$ if $G_{p}$ is normal in $G$, where $p=17,19,29, 31,41$ or $71$, hence, $G$ has no  normal subgroup with order $p$. Moreover, by calculating, it follows that $47$ does not divide $|\rm{GL}(20,3)|$, $|\rm{GL}(9,5)|$, $|\rm{GL}(6,7)|$, $|\rm{GL}(2,11)|$ and  $|\rm{GL}(3,13)|$. Hence, if $G$ has a section of $Frobenius$ type $U\ltimes V$ with  $47\big||U|$, then $V$ can only  be a $2-$group.

 (2) Assume  that $G$ is a $Frobenius$ group. If  $47\in\pi(K)$, then $|K_{47}|=47$ and $K_{47}\unlhd G$. So that $|H|=23$, which yields $17\in\pi(K)$, a contradiction. Therefore, $47\in\pi(H)$, which implies $|K|=|G_2|$ by (1), contradicting  $|H|\big|(|K|-1)$.

 (3) Assume that $G$ is a $2-Frobenius$ group. Then $17\not\in\pi(D)$ by (1). If  $47\in\pi(D)$, then $|E|=23$, hence, $|F|\big|2\cdot11$, from which it follows that $17\in\pi(D)$, a contradiction to (1). If  $47\in\pi(E)$, then $|F|$ divides $2\cdot 23$, and $D$ is a $2-$group by (1). Consequently, for any $p\in\pi(G)\setminus\{2,23\}$,  all $p-$elements are contained in $E$, so $|E|\geq 3^{13}\cdot5^6\cdot7^2\cdot11\cdot13\cdot17\cdot19\cdot31\cdot47>2^{46}$,  contradicting $|E|\big|(|D|-1)$. As a result, $47\in\pi(F)$, then $E$ can only be a $2-$group by (1) since $EF_{47}$ is a $Frobenius$ group, a contradiction to $|E|$ is  odd.  The Lemma follows.\end{proof}

\begin{lemma} \label{ty11} Let $G$ be a group and $S$ one of $Suz,\ HS,\ M^{c}L,\ He,\ HN,\ Th,\ O^{\prime}N,$ $Ly$ and $Ru$. If $|G|=|S|$,  then $G$ is neither a $Frobenius$ group nor a $2-$$Frobenius$ group. \end{lemma}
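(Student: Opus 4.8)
The plan is to treat the nine sporadic groups $Suz,\ HS,\ M^cL,\ He,\ HN,\ Th,\ O'N,\ Ly,\ Ru$ by the same mechanism used in Lemmas \ref{ty6}--\ref{ty10}. First I would record from \cite{ATLAS} the prime factorizations of the nine orders, then for each group single out one large prime $p$ (a prime whose square does not divide $|S|$ and which sits alone or nearly alone at the top of $\pi(S)$). The goal is to run the trichotomy of Lemma \ref{ty1} assuming $G$ is Frobenius or $2$-Frobenius and derive a contradiction from the presence of $p$.

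The engine is Lemma \ref{ty61} together with Corollary \ref{cor51}: for the chosen prime $p$ I would verify by direct computation that $p\nmid|\mathrm{GL}(n,q)|$ for the relevant small $q\in\pi(S)\setminus\{p\}$ and Sylow exponents $n=|G_q|$, equivalently $p\nmid\prod_{i=1}^{n}(q^i-1)$. This forces that in any section of Frobenius type $U\ltimes V$ with $p\mid|U|$, the kernel $V$ can only be a $\{2\}$-group (or a $\{2,3\}$-group, depending on which $q$'s survive), exactly as in step (1) of each earlier lemma. I would also note, via Corollary \ref{cor51}, that $G$ can have no nilpotent normal subgroup divisible by certain middle primes, since otherwise $t(G)=1$, contradicting the hypothesis that $\Gamma(G)$ is disconnected.

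With these two structural facts in hand, the argument splits into the two standard cases. If $G$ is a Frobenius group $G=HK$ with kernel $K$ and complement $H$, then either $p\in\pi(K)$, which forces $|H|$ small and then drags a smaller prime into $\pi(K)$ against the normality restriction; or $p\in\pi(H)$, which by the $\mathrm{GL}$ computation forces $K=G_2$ and hence $|H|=|G|/2^a$, contradicting $|H|\mid(|K|-1)$ from Lemma \ref{ty2}. If $G$ is a $2$-Frobenius group $G=DEF$ as in Lemma \ref{ty3}, I would track where $p$ lands among $D$, $E$, $F$: since $|E|$ is odd and $E$ sits between two Frobenius groups, each placement of $p$ collapses via the divisibility constraints $|F|\mid(|E|-1)$ and $|E|\mid(|D|-1)$ together with the forced $2$-group conclusions, producing either a Frobenius section of impossible order or an inequality $|E|>|D|$.

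The main obstacle is purely bookkeeping and varies per group: several of these groups (notably $Suz,\ Ly,\ Th$) have a rich prime set, so the single top prime may not immediately kill every branch, and a second auxiliary prime will be needed to close the $2$-Frobenius case (as happened with the interplay of $43$ and $5$ in Lemma \ref{ty7} and of $13$, $11$, $5$ in Lemma \ref{ty9}). For $O'N$ and $Ly$ I expect to compute specific values of $q^i-1$ to rule out a $3$-group kernel, and for $HN$ and $He$ the relatively small prime sets should make the top prime alone sufficient. I would organize the proof as one step per group (or grouped where the factorizations are parallel), each following the template: fix $p$, apply Lemma \ref{ty61} and Corollary \ref{cor51}, then dispatch the Frobenius and $2$-Frobenius cases, and finally conclude the lemma by collecting these steps.
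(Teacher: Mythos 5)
Your proposal is correct and follows essentially the same route as the paper: choose one large prime per group, use Lemma \ref{ty61}/Corollary \ref{cor51} to force the kernel of any Frobenius section whose complement contains that prime to be a small-prime group (a $2$-group, $\{2,3\}$-group, or in the $HN$/$Th$/$O'N$ case a $7$-group), rule out normal Sylow subgroups for the relevant primes via $t(G)>1$, and then dispatch the Frobenius and $2$-Frobenius cases through the divisibility constraints $|H|\mid(|K|-1)$, $|F|\mid(|E|-1)$, $|E|\mid(|D|-1)$, with auxiliary primes (e.g.\ $7$, $11$, $13$) closing the remaining branches exactly as the paper does in its Steps 1--5. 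The computations you defer (divisibility of $|\mathrm{GL}(n,q)|$ and of $q^i-1$ by the chosen primes) are precisely the ones the paper carries out, so the plan instantiates to the paper's proof.
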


\begin{proof} We write the proof in five steps.

\textbf{Step 1} \ To show the lemma follows while $|G|$ equals one of  $|Suz|$, $|HS|$ and  $|M^cL|$.

From  \cite{ATLAS}, we get that:
 $|Suz|=2^{13}\cdot3^7\cdot5^2\cdot7\cdot11\cdot13$;
$|HS|=2^{9}\cdot3^2\cdot5^3\cdot7\cdot11$; $|M^cL|=2^{7}\cdot3^6\cdot5^3\cdot7\cdot11$.

(1) If $G_{11}\unlhd G$, then by $|G_{11}|=11$ we can show that $t(G)=1$, a contradiction. Hence, $G_{11}\ntrianglelefteq G$. Similarly, we get that $G_7\ntrianglelefteq G$. Let $U\ltimes V$ be a section of $Frobenius$ type of $G$ such that $11\big||U|$. Since  $11\nmid |\rm{GL}(3,5)|$, then $V$ can only  be a $\{2,3\}-$group.

(2) Assume  that $G$ is a $Frobenius$ group. Then  $11\in\pi(H)$ by (1), so that $K$ can only be a $\{2,3\}-$group. Note that $|H|\big|(|K_p|-1)$ for $p=2$ or $3$, it is impossible by trivial calculating.

(3) Assume that $G$ is a $2-Frobenius$ group. Then  $7,11\not\in\pi(D)$  by (1).  Suppose that $11\in\pi(E)$, then  $|F|\big|2\cdot5$ and $D$ is a $\{2,3\}-$group by (1). This indicates that  $7\in\pi(E)$.  Hence, $|F|=2$, thereby, $5\in\pi(E)$. If $3\in\pi(D)$, then, in view of the facts that $5\parallel|GL(7,3)|$ and $5^2\big||G|$, there exists an element of order 5 in $E$ commutes an element of order 3, a contradiction. So $3\in\pi(E)$. Therefore, $D$ is a $2-$subgroup, and thus, all the $2^{\prime}-$elements are  in $E$, it is impossible for $|E|\big|(|D|-1)$ and $|D|=|G_2|/2$. Now $11\in\pi(F)$, then $E$ is  a $3-$group by (1) and $|E|$ odd. In this case $E=G_3$, but by checking we know that $11\nmid |G_3|-1$, a contradiction to $EF_{11}$ is  a $Frobenius$ group.

\textbf{Step 2}\  To show the lemma follows while $|G|=|He|$.

(1) By $|G|=|He|=2^{10}\cdot3^3\cdot5^2\cdot7^3\cdot17$, one has  that $|G_{17}|=17$, hence, $G$ has no nilpotent normal subgroup with order divided by $17$ for $t(G)>1$. Moreover, by $17$ does not divide the orders of $GL(3,3)$, $GL(2,5)$ and $GL(3,7)$, we  get that  if $G$ has a section of $Frobenius $ type $U\ltimes V$ with  $17\big||U|$ then $V$ is a $2-$group.

 (2) Assume  that $G$ is a $Frobenius$ group. Then  by (1) it follows that $17\in\pi(H)$, and $|K|=2^{10}$. Hence, $|H|=3^3\cdot5^2\cdot7^3\cdot17$, contradicting $|H|\big|(|K|-1)$. Assume that $G$ is a $2-Frobenius$ group.  Then we get that $17\in\pi(E)$ or $17\in\pi(F)$ by (1).  Suppose that $17\in\pi(E)$, then   $D$ and $F$ are two   $2-$groups by (1) and $|F|\big|16|$.   As a result, $\pi(E)=\{3,5,7,17\}$. Contradicting  $|E|\big|(|D|-1)$.  Hence,   $17\in\pi(F)$, yielding that  $E$ is a $2-$group by (1), a contradiction to $|E|$ is  odd.

\textbf{Step 3} To show the lemma follows while $|G|$ equals one of $|HN|,\ |Th|$ and $ |O^{\prime}N|$.

(1) By \cite{ATLAS}, $|HN|=2^{14}\cdot3^6\cdot5^6\cdot7\cdot11\cdot19$;
  $|Th|=2^{15}\cdot3^{10}\cdot5^3\cdot7^2\cdot13\cdot19\cdot31$;
  $|O^{\prime}N|=2^{9}\cdot3^{4}\cdot5\cdot7^3\cdot11\cdot19\cdot31$.
Hence, $|G_{19}|=19$, and $G_{19}$ is not normal in $G$, otherwise, $t(G)=1$. Moreover, since $19$ does not divide $|\rm{GL}(15,2)|$, $|\rm{GL}(10,3)|$, $|\rm{GL}(6,5)|$ and  $|\rm{GL}(2,7)|$, we  get that   if $G$ has a section of $Frobenius $ type $U\ltimes V$ with  $19\big||U|$, then $|V|=7^3$.

(2) Assume  that $G$ is a $Frobenius$ group, then by (1) it follows that $19\big||H|$, $K=G_7$ and $|K|\big|7^3<|H|$, contradicting $|H|\big|(|K|-1)$.

 (3) Assume that $G$ is a $2-Frobenius$ group. Then $19\in\pi(E)$ or $19\in\pi(F)$ by (1).  If $19\in\pi(E)$, since $DE_{19}$ is a $Frobenius$ group, we have $D$ is a $7-$ group, further, by $19\big||D|-1$, we have $|D|=7^3$. Therefore, $|G|=|O'N|$ and $|F|=2$ for $|F|\big||E_{19}|-1$. As a result,  $|E|> |D|-1$, a contradiction. If $19\in\pi(F)$, then $|E|=7^3$ by (1) and previous argument, so $|G|=|O'N|$. Noticing that $7^3-1=19\cdot2\cdot3^2$. Therefore, $|D_{11}|=11$, which is impossible for $ED_{11}$ is a $Frobenius$ group.

\textbf{Step 4} To show the lemma follows while $|G|=|Ly|$.

 (1) In such case, $|G|=|Ly|=2^{8}\cdot3^{7}\cdot5^6\cdot7\cdot11\cdot31\cdot37\cdot67$.
Since $67$ does not divide $|\rm{GL}(8,2)|$, $|\rm{GL}(7,3)|$ and  $|\rm{GL}(6,5)|$, we come to that $G$ has no  section of   $Frobenius $ type $U\ltimes V$ with $67\big||U|$.

 (2) Assume  that $G$ is a $Frobenius$ group.  Then  $67\in\pi(K)$ by (1), hence,  $|H|\big|66$. Therefore, $7\in\pi(K)$ and $|K_7|=7$, we get a contradiction for $K_7H$ is a $Frobenius$ group.

 (3) Assume that $G$ is a $2-Frobenius$ group.  Then  $67\in\pi(D)$ by (1) and $E$,  $F$ are two $Frobenius$ complements.
 Hence, $|E|\big| 33$, which concludes $|F|\big|2\cdot5$. Thereby, $7\in\pi(D)$, since $|D_7|=7$, we get a contradiction for $D_7E$ is a $Frobenius$ group.

\textbf{Step 5} To show the lemma follows while $|G|=|Ru|$.

(1) In this case, $|G|=|Ru|=2^{14}\cdot3^{3}\cdot5^3\cdot7\cdot13\cdot29$. Since $29$ does not divide $|\rm{GL}(14,2)|$, $|\rm{GL}(3,3)|$ and  $|\rm{GL}(3,5)|$, we come to that $G$ has no  section of  $Frobenius $ type $U\ltimes V$ with $29\big||U|$. Noticing that $|G_{13}|=13$, we deduce that $G$ has no nilpotent normal subgroup with order divided by $13$.

 (2) Assume  that  $G$ is a $Frobenius$ group. Then,  $29\in\pi(K)$ by (1), thus,  $|H|\big| 28$. Hence, $13\in\pi(K)$, a contradiction to (1).  Assume that $G$ is a $2-Frobenius$ group. Then,  $29\in\pi(D)$. And thus,  $|E|=7$ for $|E|$ odd, it follows that $|F|\big|2\cdot3$. Thereby, $13\in\pi(D)$, a contradiction to (1).

 This Lemma follows from Step 1-5.
\end{proof}

\begin{cor}\label{ty12}Let $G$ be a finite group and  $S$  one of 26 sporadic groups.   If $|G|=|S|$ and the prime graph of $G$ is disconnected,  then $G$ has a normal series $1\unlhd H \unlhd K \unlhd G $ such that the following property (*):\\	
(*)\ \ \ \ $H$ and $G/K$ are $\pi_1$ groups, $K/H$ is a non-abelian simple group, $H$ nilpotent, and $|G/K|\big| |\mathrm{Out}(K/H)|$.

\end{cor}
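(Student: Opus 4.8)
The plan is to read off property (*) directly from the Gruenberg--Kegel trichotomy recorded in Lemma \ref{ty1}, once the two degenerate cases have been discarded. Since $\Gamma(G)$ is assumed disconnected, Lemma \ref{ty1} tells us that one of three alternatives holds: $G$ is a $Frobenius$ group, $G$ is a $2$-$Frobenius$ group, or $G$ carries a normal series $1 \unlhd H \unlhd K \unlhd G$ with $H$ and $G/K$ being $\pi_1$-groups, $K/H$ a non-abelian simple group, $H$ nilpotent, and $|G/K| \,\big|\, |\mathrm{Out}(K/H)|$. This last alternative is verbatim the property (*) to be established, so the entire task reduces to ruling out the first two alternatives.

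The key step is to observe that the six preceding lemmas, Lemmas \ref{ty6}--\ref{ty11}, between them cover all $26$ sporadic simple groups: the Mathieu groups $M_{11},M_{12},M_{22},M_{23},M_{24}$ (Lemma \ref{ty6}), the Janko groups $J_1,J_2,J_3,J_4$ (Lemma \ref{ty7}), the Conway groups $Co_1,Co_2,Co_3$ (Lemma \ref{ty8}), the Fischer groups $Fi_{22},Fi_{23},Fi_{24}^{\prime}$ (Lemma \ref{ty9}), the Monster $M$ and the Baby Monster $B$ (Lemma \ref{ty10}), and the remaining nine $Suz, HS, M^{c}L, He, HN, Th, O^{\prime}N, Ly, Ru$ (Lemma \ref{ty11}); these total $5+4+3+3+2+9=26$. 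Each of these lemmas assumes only $|G|=|S|$, which is precisely the hypothesis we are given, and concludes that $G$ is neither a $Frobenius$ group nor a $2$-$Frobenius$ group. Hence, since $S$ is one of these $26$ groups, the matching lemma applies and eliminates the first two alternatives of Lemma \ref{ty1}, forcing the third, namely (*).

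There is essentially no obstacle left at the level of the corollary itself: it is a bookkeeping assembly of results already in hand, and the only thing to check carefully is that the six lemmas genuinely exhaust the list of sporadic groups and share the common hypothesis $|G|=|S|$. The substantive difficulty lives entirely in Lemmas \ref{ty6}--\ref{ty11}, where for each family one traces the largest prime divisor (for instance $11$, $19$, $23$, $47$ or $67$) through the $Frobenius$ and $2$-$Frobenius$ structure theorems (Lemmas \ref{ty2} and \ref{ty3}) and deploys the arithmetic criterion of Lemma \ref{ty61} together with Corollary \ref{cor51} — the divisibility tests $p \nmid |\mathrm{GL}(m,q)|$ — to force a contradiction with $|H| \,\big|\, (|K|-1)$ or with the non-existence of a $Frobenius$ section of the required shape. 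Once those case analyses are granted, the corollary follows immediately by the dichotomy-elimination described above.
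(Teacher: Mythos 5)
Your proof is correct and matches the paper's argument exactly: the paper also derives the corollary immediately from the trichotomy of Lemma \ref{ty1} combined with Lemmas \ref{ty6}--\ref{ty11}, which eliminate the $Frobenius$ and $2$-$Frobenius$ alternatives for all $26$ sporadic groups. Your verification that the six lemmas exhaust the sporadic groups and share the hypothesis $|G|=|S|$ is the only bookkeeping required, and it is done correctly.
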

\begin{proof} It follows straightforward from Lemma \ref{ty1} and Lemma \ref{ty6}$-$\ref{ty11}.

\end{proof}

\subsection{To prove $G\cong S$}

The necessity of the Main Theorem is obvious.  Now we start to prove   the sufficiency of The Main Theorem. In the following discussion, $K$ and $H$ always means subgroups of $G$ in Corollary \ref{ty12}.
We shall  frequently consider the possibilities  of $K/H$ by $|K/H|$ dividing $|S|$ by using the list of simple groups in \cite{ATLAS}. But  \cite{ATLAS}  lists only all the non-abelian simple groups of order less than $10^{25}$, except  that $L_2(q), L_3(q), U_3(q)$, $ L_4(q), U_4(q), S_4(q)$ and $ G_2(q)$, which are stopped at orders $10^6(q\leq 125), 10^{12}(q\leq31), 10^{12}(q\leq 32)$, $ 10^{16}(q\leq 11), 10^{16}(q\leq 11), 10^{16}(q\leq 41), 10^{20}(q\leq 25)$, respectively. Hence, there exist simple section $K/H$ not appeared in the list of \cite{ATLAS} when $|S|\geq 10^{6}$. Noticing the fact that orders of sporadic simple groups except $B$ and $M$ are all of order$<10^{25}$, $|B|\big||M|$ and  $10^{53}<|M|<10^{54}$. Hence, it is necessary to determine two kinds of  simple groups $T$ not contained in the list of \cite{ATLAS}, one is $T$ such that $|T|\big||M|$ and  $|T|<10^{54}$; another one is contained in the series of $L_2(q)(q>125), L_3(q)(q>31), U_3(q)(q>32)$, $ L_4(q)(q>11), U_4(q)(q>11), S_4(q)(q>41)$ and $ G_2(q)(q>25)$, such that $|T|$ dividing the order of some sporadic simple group except $B$ and $M$.  We have the following lemma.
\begin{lemma}\label{lemT}Let $S$ be a sporadic group, and $T$ a non-abelian simple group, then
	
(I) If $S=M$, $|T|\big||S|$, and $T$ is not contained in the list of \cite{ATLAS}, then $T$ is  one of $A_n(26\leq n\leq 32)$, $L_2(2^{10})$ and $L_2(13^2)$.

(II) If $S\neq$ $M$, $|T|\big||S|$,  then $T$ is contained in the list of \cite{ATLAS}. \end{lemma}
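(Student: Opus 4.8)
The plan is to prove both parts by a direct enumeration of the non-abelian simple groups $T$ whose order divides $|M|$, organised by the family to which $T$ belongs, using the prime set $\pi(M)=\{2,3,5,7,11,13,17,19,23,29,31,41,47,59,71\}$ together with the exact prime-power factorisation of $|M|$ as the basic constraint. The starting reduction is that $|T|\big||M|$ forces $\pi(T)\subseteq\pi(M)$ and, for every prime $r\in\pi(T)$, the $r$-part of $|T|$ to be at most the $r$-part of $|M|$. Since \cite{ATLAS} already lists every non-abelian simple group of order less than $10^{25}$ with the sole exceptions of the seven families $L_2(q),L_3(q),U_3(q),L_4(q),U_4(q),S_4(q),G_2(q)$ truncated at the stated bounds, together with the alternating groups beyond $A_{25}$ (whose order first exceeds $10^{25}$ at $A_{26}$), it suffices to run through exactly these families and, in addition, to check that no simple group of order at least $10^{25}$ from any other family can divide $|M|$.

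For part (I) I would argue family by family. For the alternating groups $A_n$ with $n\ge26$: since $37\notin\pi(M)$ we must have $n\le36$, and since the exponent of $11$ in $33!$ is $3$ while the $11$-part of $|M|$ is only $11^2$, the multiplicity of $11$ rules out $n\ge33$; a direct check then confirms $|A_n|\big||M|$ for $26\le n\le32$, giving precisely the stated alternating candidates. For $L_2(q)$ with $q>125$, write $q=p^f$ with $p\in\pi(M)$; since $q$ prime would force $q=p\le71$, we have $f\ge2$, and as $p\nmid q^2-1$ both $q-1$ and $q+1$ must be $\pi(M)$-numbers. By Zsigmondy's theorem a primitive prime divisor $r$ of $p^f-1$ satisfies $r\equiv1\pmod f$, hence $r\ge f+1$, so $r\le71$ bounds the exponent $f$; for each admissible pair $(p,f)$ one checks whether $p^f\pm1$ are $\pi(M)$-smooth with the correct multiplicities, and the only survivors with $q>125$ are $q=2^{10}$ (where $q-1=3\cdot11\cdot31$, $q+1=5^2\cdot41$) and $q=13^2$ (where $q-1=2^3\cdot3\cdot7$, $q+1=2\cdot5\cdot17$). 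For the remaining five families $L_3,U_3,L_4,U_4,S_4,G_2$ beyond their cutoffs, the order carries cyclotomic factors $q^i-1$ with $i\ge2$ (for instance $q^2+q+1\big||L_3(q)|$), each of which acquires by Zsigmondy a primitive prime divisor congruent to $1$ modulo $i$ and growing with $q$; requiring all of them to lie in $\pi(M)$ bounds $q$ to values already within the ATLAS cutoff, so these families contribute nothing new. The same Zsigmondy-type growth disposes of every other classical or exceptional family of order at least $10^{25}$: a primitive prime divisor outside $\pi(M)$ appears, so no such $T$ divides $|M|$.

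For part (II) I would exploit $|B|\big||M|$. If $S=B$ and $|T|\big||B|$ then $|T|\big||M|$, so by part (I) either $T$ lies in the list or $T$ is one of $A_{26},\dots,A_{32}$, $L_2(2^{10})$, $L_2(13^2)$; but each of these fails to divide $|B|$ (for example the exponent of $7$ in $26!$ is $3>2$, which is the $7$-part of $|B|$, so all alternating candidates are excluded, while $41\nmid|B|$ removes $L_2(2^{10})$ and $13^2\nmid|B|$ removes $L_2(13^2)$), whence $T$ is in the list. If instead $S\ne B,M$ then $|S|<10^{25}$, so $|T|<10^{25}$ and $T$ can escape the list only by belonging to one of the seven truncated families beyond its cutoff; for these it suffices to observe that the smallest beyond-cutoff members already introduce a prime or prime-power not dividing $|S|$, which is verified against the factorisations of the individual sporadic orders.

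The main obstacle will be the $L_2(q)$ enumeration and, more generally, making the Lie-type arguments genuinely finite and complete. The delicate point is to convert the condition ``both $q-1$ and $q+1$ are $\pi(M)$-smooth'' into an effective bound on $q$: Zsigmondy's theorem bounds the exponent $f$, but one still needs a careful finite search over the residual prime powers to be certain that $2^{10}$ and $13^2$ are the only solutions with $q>125$, and analogous completeness must be argued uniformly across all classical and exceptional families of order below $10^{54}$ rather than only the seven explicitly truncated ones.
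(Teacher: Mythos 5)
Your proposal is correct in substance and shares the paper's overall skeleton --- a CFSG-driven, family-by-family enumeration of the non-abelian simple groups whose order divides $|M|$, with arithmetic constraints read off from the prime factorization of $|M|$, followed by a finite verification --- but it differs in two ways worth recording. First, your bounding tool for the Lie-type families is Zsigmondy's theorem (a primitive prime divisor of $p^{fi}-1$ is congruent to $1$ modulo $fi$, hence exceeds $71$ once $fi>70$), whereas the paper bounds the field size purely by comparing prime-power exponents (for instance, for $L_2(q)$ it writes down $q=2^t$ with $7\leq t\leq 46$, $q=3^t$ with $5\leq t\leq 20$, etc., because the $2$-part of $|M|$ is $2^{46}$ and the $3$-part is $3^{20}$) and then disposes of the resulting finite lists by explicit order computations in Maple; your route is more conceptual and shrinks the search space, but both end in the same kind of finite check. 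Second, and more significantly, your decomposition of part (II) is different: the paper observes that $|S|$ divides $|M|$ for \emph{every} sporadic group except $Ly$ and $J_4$ (not just for $B$), so for all those $S$ part (II) reduces immediately to part (I) plus the quick observation that none of $|A_{26}|,\dots,|A_{32}|$, $|L_2(2^{10})|$, $|L_2(13^2)|$ divides any sporadic order other than $|M|$; a fresh search through the seven truncated families is then needed only for the two groups $Ly$ and $J_4$. You exploit the divisibility trick only for $B$, so your plan would force you to repeat the truncated-family search separately for each of the remaining $22$ sporadic groups --- still correct, but substantially heavier than necessary. One caveat on that heavier step: as written, the criterion that ``the smallest beyond-cutoff members already introduce a prime or prime-power not dividing $|S|$'' is not sufficient, since divisibility of orders is not monotone along a family; you must check every beyond-cutoff member whose order is at most $|S|$, which is exactly the finite Maple computation the paper performs for $Ly$ and $J_4$, and which your closing paragraph rightly identifies as the remaining work.
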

\begin{proof}
For a simple group $T$  satisfying the hypothesis of (I), since $10^{53}<|M|<10^{54}$, we have that $|T|\leq 10^{54}$, hence we have to check if there exists  $T$  of order $\leq 10^{25}$ among the series of  $L_2(q)(q>125), L_3(q)(q>31), U_3(q)(q>32)$, $ L_4(q)(q>11), U_4(q)(q>11), S_4(q)(q>41)$ and $ G_2(q)(q>25)$, and to check if there exists  $T$  of order greater than  $10^{25}$ and smaller than $10^{54}$ among all non-abelian simple groups except sporadic simple  groups. In order to make the proof easy to read, we check all series of non-abelian simple groups except sporadic simple groups. By  Classification Theorem of Simple Groups, we have the following steps.

(1) Assume that $T\cong A_n$, an alternating group. Note that $|A_{25}|<10^{25}<|A_{26}|$, we have $n\geq 26$. On the other hand, in view of the fact that $11^2$$\parallel$$ |M|$, we come to that $26\leq n\leq 32$. By calculating the order of  $A_n(26\leq n\leq 32)$, we get that $T\cong A_n(26\leq n\leq 32$).

(2) Assume that $T\cong A_n(q)$. Then  $|T|=q^{\frac{n(n+1)}{2}}\prod\limits_{i=1}^n(q^{i+1}-1)/(n+1,q-1)$. It follows from $|T|\big||M|$  and the largest exponent of prime power of $|M|$ is $46$ that $n\leq 9$. Therefore, $T$ can only be one of the Lie type simple groups: $A_1(q)(q>125)$, $A_2(q)(q>31)$, $A_3(q)(q>11)$, $A_4(q)(q>11)$, $A_5(q)(q>5)$, $A_6(q)(q>3)$, $A_7(q)(q>2)$, $A_8(q)(q>2)$, $A_9(q)(q<4)$.

If $T=A_1(q)(q>125)$, since   $|T|$ divides $|M|$, we deduce that one of the cases holds: $q=2^t(7\leq t\leq 46)$,  $q=3^t(5\leq t\leq 20)$, $q=5^t(4\leq t\leq 9)$, $q=7^t(3\leq t\leq 6)$, $q=13^t(2\leq t\leq 3)$. Now calculating the order of these groups by using Maple, we come to that $q$ is either $2^{10}$ or $13^2$.

If $T=A_2(q)(q>31)$, then $q^3$ divides one of  $2^{45}$, $3^{18}$, $5^9$ and $7^6$. Hence, $q$ is one of: $2^t(5\leq t\leq 15)$, $q=3^t(4\leq t\leq 6)$, $q=5^3$ and $q=7^2$. Now calculating the order of these groups, for example, by using Maple, we have $|T|\nmid |M|$, a contradiction.

By the same approach above, we can get contradictions if $T=A_n(q)$, where $3\leq n\leq 9$.

(3) To show $T\not\cong B_n(q)(n\geq 2)$ or $C_n(q)(n\geq 3)$. Otherwise, $|T|=|B_n(q)|=|C_n(q)|=q^{n^2}\prod\limits_{i=1}^n(q^{2i}-1)/(2,q-1)$. It follows from $|T|\big||M|$ that $n\leq 6$. Hence, $T$ may be one of $B_2(q)(q>41)$, $B_3(q)(q>13)$, $B_4(q)(q>5)$, $B_5(q)(q>2)$ and $B_6(q)(q>2)$.
Since the approach is the same, here we discuss the case that $T\cong B_2(q)(q>41)$ as an example, and the other cases can be studied by the same way. In view of the facts that $|T|\big||M|$, $q>41$ and the exponent  of $q$ is  $4$, by comparing the powers in $|M|$, we conclude that $q^{4}$ divides $2^{44}$ or $3^{20}$. Hence, $q=2^t,\ 6\leq t\leq 11$ or $q=3^t,\ 4\leq t\leq 5$. Now calculating the order of  each group, we get that $|M|\not\equiv0\ \ (\rm{mod}\ |T|)$, a contradiction.

(4) To show $T\not\cong D_n(q)(n\geq 4)$. Otherwise
 $|T|=q^{n(n-1)}(q^n-1)\prod\limits_{i=1}^{n-1}(q^{2i}-1)/(4,q^n-1)$. It follows from $|T|\big||M|$ that $n\leq 7$. Therefore, $T$ may be one of $D_4(q)(q>19)$, $D_5(q)(q>5)$, $D_6(q)(q>3)$, $D_7(q)(q>2)$. Here, as an example, assume that $T\cong D_4(q)(q>19)$, and the other cases can be investigated  similarly. In view of the facts that $|T|\big||M|$ and $n\geq 4$, we deduce that  $q^{12}\big||T|$, thus, $q$ is a power of $2$ or $3$.  But $q>19$, hence, $q=2^r$ or $3^s$, where $r\geq 5$ and $s\geq 3$, which implies $2^{60}$ or $3^{36}$ divides $|T|$, a contradiction.

 (5) To show $T\not\cong G_2(q)(q>25)$, $F_4(q)(q>3)$, $E_6(q)(q>2)$.  Since $q^{24}$ and $q^{36}$ divides the orders of $F_4(q)(q>3)$ and $E_6(q)(q>2)$, respectively, so it is easy to show $T\not\cong F_4(q)(q>3)$, $E_6(q)(q>2)$. Assume $T\cong G_2(q)(q>25)$, then $|T|=q^6(q^6-1)(q^2-1)$. Comparing the power of a prime in $|M|$ and noticing $q>25$, we get that $q$ equals  one of $2^t,$ $5\leq t\leq 7$, $3^3$. Now calculating the orders of  these groups, we get that $|M|\not\equiv0\ \ (\rm{mod}\ |T|)$, a contradiction.

 (6) To Show $T\not\cong E_7(q)$ and $E_8(q)$. Since $q^{63}\big||E_7(q)|$ and $q^{120}\big||E_8(q)|$, we have $|T|$ can not divide $|M|$.

 (7) To show $T\not\cong $$^2$$A_n(q)(n\geq 2)$. Assume $T\cong $$^2$$A_n(q)(n\geq 2)$. Then
 $|T|=q^{\frac{n(n+1)}{2}}\prod\limits_{i=1}^{n}(q^{i+1}-(-1)^{i+1})/(n+1,q+1)$. It follows from $|T|\big||M|$ that $n\leq 9$. Therefore, $T$ may be one of $^2$$A_2(q)(q>32)$, $^2$$A_3(q)(q>11)$, $^2$$A_4(q)(q>11)$, $^2$$A_5(q)(q>5)$, $^2$$A_6(q)(q>3)$, $^2$$A_i(q)(i=7,8, q>2)$ and $^2$$A_9(q)(q<4)$. Here, we take $T\cong$$^2$$A_2(q)(q>32)$ as an example, and the other cases may be studied similarly. In view of the facts that $|T|\big||M|$ and $q>32$, we deduce that $q^{3}$ may be a power of $2$, $3$, $5$ or $7$ and divides $2^{45}$, $3^{18}$,  $5^9$ or $7^6$. Hence, $q$ equals one of $2^{t},6\leq t\leq 15$, $3^{t},4\leq t\leq 6$, $5^3$ and $7^2$. By calculation, we get that $|M|\not\equiv0\ \ (\rm{mod}\ |T|)$, a contradiction.

 (8) To show $T\not\cong$ $^2$$ B_2(q),q=2^{2m+1}(m>7)$. Otherwise, $T\cong$ $^2$$ B_2(q),q=2^{2m+1}(m>7)$. Then  $|T|=q^2(q^2+1)(q-1)$. Comparing the power of $2$ in $|M|$, we get that $q$ equals  one of $2^t,\ t=2m+1,$ $8\leq m\leq 22$. By  calculation, we get that $|M|\not\equiv0\ \ (\rm{mod}\ |T|)$, a contradiction.

  (9) To show $T\not\cong$$^2$$D_n(q)(n\geq4)$. Otherwise, $T\cong$$^2$$D_n(q)(n\geq4)$. Then $|T|=q^{n(n-1)}(q^n+1)\prod\limits_{i=1}^{n-1}(q^{2i}-1)/(4,q^n+1)$,  hence, $n\leq 7$. Therefore, $T$ may be one of $^2$$D_4(q)(q>17)$, $^2$$D_5(q)(q>7)$, $^2$$D_6(q)(q>3)$ and  $^2$$D_7(q)(q>2)$. We take $T\cong$$^2$$D_4(q)(q>17)$  as an example, and the other cases may be dealt with similarly. If $T\cong$$^2$$D_4(q)(q>17)$, then $q^{12}\big||T|$, so that $q^{12}$ divides $2^{46}$ or $3^{20}$ by observing the powers of primes dividing $|M|$. Hence, $q=2^r$ or $3^s$ since $q>17$, where $r\geq 5$ and $s\geq 3$. Therefore $2^{60}$ or $3^{36}$ dividing $|T|$, a contradiction.

 (10) To show $T\not\cong$ $^3$$ D_4(q)(q>7)$. Otherwise, $T\cong$ $^3$$ D_4(q)(q>7)$. Then  $|T|=q^{12}(q^8+q^4+1)(q^6-1)(q^2-1)$, so $q^{12}\big||M|$. Since $q>7$, comparing the prime power of $|M|$, it follows that $q=2^3$, and $T=^3$$D_4(8)$, we get that $|M|\not\equiv0\ \ (\rm{mod}\ |T|)$, a contradiction.

 (11) To show $T\not\cong$ $^2$$ G_2(q)(q=3^{2m+1}, m>3)$. Otherwise, $T\cong$ $^2$$ G_2(q)(q=3^{2m+1}, m>3)$. Then $|T|=q^{3}(q^3+1)(q-1)$ and $q^{3}\big|3^{20}$, hence, $q\big|3^6$, contradicting $q\geq 3^9$.

 (12) To show that $T\not\cong ^2$$ F_4(q)(q=2^{2m+1})$, $^2$$ E_6(q)$. Assume $T\cong$ $^2$$ F_4(q)(q=2^{2m+1})$, $^2$$ E_6(q)$. Then $|T|=q^{12}(q^6+1)(q^4-1)(q^3+1)(q-1)$, and  $q^{12}$ divides $2^{46}$, thus, $q=2^3$, but $|M|\not\equiv0\ \ (\rm{mod}\ |^2F_4(8)|)$, a contradiction. Using the similar arguments as above, we can easily get a contradiction if $T\cong$$^2$$ E_6(q)$.

  By (1)-(12), we come to that (I) follows.

  Now we start to prove (II). Let $S$ be a sporadic group, then $|S|<10^{25}$ if  $S\neq B$ and $M$;  and $|S|\big||M|$ if  $S\neq Ly$ and $J_4$. Hence if $S\not\cong Ly$ and $J_4$, then $T$ is  among simple groups discribed in  (I), i.e., $L_2(2^{10})$, $L_2(13^2)$ and $A_n\ (26\leq n\leq 32)$. By  checking, it follows that no one of   $|L_2(2^{10})|$, $|L_2(13^2)|$ and $|A_n|\ (26\leq n\leq 32)$ can  divide the order of any sporadic group except  $M$, which implies that (II) follows for $S\neq M$, $Ly$ and $J_4$. If $S\cong Ly$ or $J_4$,   $T$ is one of $L_2(q)(q>125), L_3(q)(q>31), U_3(q)(q>32)$, $ L_4(q)(q>11), U_4(q)(q>11), S_4(q)(q>41)$ and $ G_2(q)(q>25)$, then, by using Maple to calculate the orders of groups with orders $<10^{25}$ among these series, we can show that  $|T|$ does not divide $|S|$. So (II) and then Lemma \ref{lemT} follows.\\
\end{proof}

\par Now we begin to prove $G\cong S$, we divide the whole proof into six  lemmas.

 \begin{lemma} \label{le1} Let $G$ be a group and    $S$   one of groups: $M_{11}$, $M_{12}$, $M_{22}$, $M_{23}$,  $M_{24}$ and $J_2$.  If $|G|=|S|$ and the prime graph of $G$ is disconnected, then $G\cong S$.\end{lemma}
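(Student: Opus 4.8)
The plan is to use Corollary \ref{ty12}, which guarantees that under the hypotheses $G$ has a normal series $1 \unlhd H \unlhd K \unlhd G$ with $H$ nilpotent, $G/K$ and $H$ being $\pi_1$-groups, $K/H$ a non-abelian simple group, and $|G/K| \big| |\mathrm{Out}(K/H)|$. The strategy for each of the six groups $S$ in the list is the same: since $|K/H|$ divides $|S|$ and $|S|$ is small (all these orders are well below $10^{25}$), Lemma \ref{lemT}(II) ensures $K/H$ appears in the \cite{ATLAS} list, so I can enumerate the finitely many candidate simple groups $K/H$ whose order divides $|S|$. The disconnectedness of $\Gamma(G)$ forces a prime from the ``large'' component (e.g.\ $11$ for $M_{11}$, $M_{12}$, $M_{22}$, or $7$ and $23$ for the larger Mathieu groups, and the appropriate isolated primes for $J_2$) to divide $|K/H|$, since $H$ is nilpotent and $G/K$ is a $\pi_1$-group; this sharply restricts the candidate list.

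First I would, for each $S$, pin down which primes $p$ must divide $|K/H|$: any prime lying in a non-trivial component $\pi_i$ ($i \geq 2$) of $\Gamma(G)$ cannot divide $|H|$ (else $H$ nilpotent would force $p$ adjacent to $2 \in \pi_1$, contradicting disconnectedness, via the fact that a $\{p,2\}$-Hall structure in a nilpotent normal subgroup yields commuting elements) and cannot divide $|G/K| \big| |\mathrm{Out}(K/H)|$ for these small outer automorphism groups; hence $p \big| |K/H|$. Second, I would list all non-abelian simple groups $T$ with $|T| \big| |S|$ and $p \big| |T|$ for the forced primes $p$, reading them off the \cite{ATLAS} order table. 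Third, for each surviving candidate $T = K/H$, I would compare $|S|/|T|$ against the constraints: $|H| \cdot |G/K| = |S|/|K/H|$, with $|G/K|$ dividing $|\mathrm{Out}(T)|$ and $H$ nilpotent. In most cases the residual factor $|H|$ will either be forced to involve a prime incompatible with $H$ being nilpotent and $\pi_1$ (reconnecting the graph), or the arithmetic $|G/K| \big| |\mathrm{Out}(T)|$ will fail, eliminating the candidate. The only surviving candidate should be $T \cong S$ itself with $H = 1$ and $G = K$, giving $G \cong S$.

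The main obstacle I expect is the case analysis where a proper simple group $T$ has order dividing $|S|$ and still contains the forced isolated prime $p$ — for instance, for $M_{24}$ there are several simple groups (various $L_2(q)$, $M_{23}$, etc.) whose orders divide $|M_{24}| = 2^{10}\cdot 3^3 \cdot 5 \cdot 7 \cdot 11 \cdot 23$ and which contain $23$. Ruling these out requires checking not just divisibility of orders but that the cofactor $|S|/|T|$ admits a valid decomposition into a nilpotent $\pi_1$-group $H$ and an outer-automorphism factor; the subtle point is showing that if $T$ is a proper section then some prime is pushed into $H$ in a way that either violates nilpotence-compatibility or forces an edge in $\Gamma(G)$ across the partition, i.e.\ reconnects the prime graph. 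I would handle this by repeatedly invoking Lemma \ref{ty61} and Corollary \ref{cor51} to detect forced elements of order $pq$ inside $H \rtimes (\text{a Sylow subgroup})$ or inside $K/H$ acting on $H$, thereby contradicting disconnectedness and collapsing the list to $K/H \cong S$.

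Finally, having forced $K/H \cong S$, the order equality $|G| = |S| = |K/H|$ immediately yields $H = 1$ and $K = G$, whence $G = K \cong S$, completing the proof for all six groups. The bulk of the work is the finite but somewhat tedious elimination of proper simple sections, which I would organize group-by-group, treating the isolated primes ($11$, $23$, $7$, $5$ as appropriate) as the lever in each argument.
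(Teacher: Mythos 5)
Your skeleton — Corollary \ref{ty12}, then Lemma \ref{lemT} to reduce to the \cite{ATLAS} list, then enumeration of all simple groups of order dividing $|S|$ and elimination of the proper candidates — is exactly the paper's, but two steps of your plan do not survive scrutiny. First, you treat the isolated primes of $\Gamma(S)$ ($11$ for $M_{11}$, $23$ for $M_{23}$, $M_{24}$, etc.) as if they were known to be isolated in $\Gamma(G)$, and you propose to list only candidates $T$ with those primes dividing $|T|$. But the hypothesis gives only $|G|=|S|$ and the disconnectedness of $\Gamma(G)$; which primes lie in the non-principal components of $\Gamma(G)$ is precisely what is unknown, so no such pruning is legitimate. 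Concretely, for $S=M_{11}$ the configuration $K/H\cong A_6$, $H$ cyclic of order $11$, $|G/K|=2$ is fully consistent with the arithmetic of property (*): there $11\in\pi(H)\subseteq\pi_1$, the candidate isolated prime is $5$, and your pruned list (requiring $11\mid |K/H|$) would never examine this case. This is why the paper's tables include $A_5$, $A_6$, $A_7$, $A_8$, $L_3(4)$, and so on.

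Second, and more seriously, the tools you cite cannot close exactly these cases. Lemma \ref{ty61} only rules out Frobenius sections and produces no edges of $\Gamma(G)$; Corollary \ref{cor51} needs a $p$-group acting on a $q$-group and is silent whenever the acting prime divides the relevant $\prod(q^i-1)$. In the $A_6$/$M_{11}$ configuration above the only prime in $\pi(H)$ is $11$, and $5\mid |\mathrm{Aut}(H_{11})|=10$, so a $5$-element may act fixed-point-freely on $H_{11}$: Corollary \ref{cor51} detects nothing at $5$, and nilpotency of $H$ gives no edge either (the same happens for $A_8$ or $L_3(4)$ inside $M_{22}$ when $|H|=11$, and for $A_7$ inside $M_{22}$ when $|H|=2^4\cdot 11$, since $5$ divides both $|GL(4,2)|$ and $10$). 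The missing idea is the centralizer argument the paper runs: since $H$ is nilpotent, $H_{11}$ is normal in $G$, so $|G/C_G(H_{11})|$ divides $|\mathrm{Aut}(H_{11})|=10$, which forces $\{2,3,11\}\subseteq\pi(C_G(H_{11}))$; disconnectedness then forces $5\nmid |C_G(H_{11})|$, and the coprime-action Lemma \ref{ty4} gives a $\langle g\rangle$-invariant Sylow $3$-subgroup of $C_G(H_{11})$ for a $5$-element $g$, on which $g$ acts trivially because $5\nmid |GL(2,3)|$, yielding $15\in\pi_e(G)$ and a contradiction. Your phrase ``$K/H$ acting on $H$'' gestures at a workable alternative — for the abelian $H_p$ occurring here, $H\le C_K(H_p)$, so $K/C_K(H_p)$ is a quotient of the simple group $K/H$ embedding in the small group $\mathrm{Aut}(H_p)$, hence trivial, making every prime of $\pi(K)$ adjacent to $p$ — but you never state this simplicity-forces-trivial-action argument, and it is not a consequence of Corollary \ref{cor51}, which does not apply to a simple group acting. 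Without Lemma \ref{ty4} and the centralizer $C_G(H_p)$ (or the spelled-out simplicity argument), the elimination of the proper candidates, which is the entire content of the lemma, does not go through.
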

 \begin{proof}
 By assumptions, $|G|$ is one of $|M_{11}|,\ |M_{12}|,\ |M_{22}|,\
 |M_{23}|$, $
 |M_{24}|$ and $|J_2|$,  also, $\Gamma(G)$ is disconnected. Applying  Corollary \ref{ty12}, $G$ has a unique simple section $K/H$ such that  $K$ and $H $ satisfying (*). By Lemma 2.15, $K/H$ can only be simple groups in the list of \cite{ATLAS}. By comparing the order of $G$ with the orders of the simple groups in \cite{ATLAS}, we can obtain  all possibilities of  $K/H$. We discuss step by step  following what $S$ is.

 \textbf{Step 1} If $S= M_{11}$, then $G\cong M_{11}$.

 By $|G|=|M_{11}|=2^4\cdot 3^2\cdot5\cdot11$, checking simple groups in the list of \cite{ATLAS}, we get that $K/H$ is isomorphic to one of groups in Table \ref{tb:M11}.

\begin{table}[h]
\setlength{\abovecaptionskip}{10pt}
\setlength{\belowcaptionskip}{0pt}
\centering
\caption{The non-abelian simple groups $K/H$ with orders dividing $|M_{11}|$~}\label{tb:M11}

\begin{tabular}
{p{5.5cm}<{\centering}p{5.5cm}<{\centering}p{4.5cm}<{\centering}p{3.5cm}<{\centering}}
  \hline
$K/H$      & $|K/H|$
  & $|\mathrm{Out}(K/H)|$     \\
  \hline
  $A_5$       & $2^{2}\cdot3\cdot5$ & $2$ \\
  $A_6$       & $2^{3}\cdot3^2\cdot5$ & $4$ \\
  $L_2(11)$   & $2^{2}\cdot3\cdot5\cdot11$ & $2$\\
  $M_{11}$    & $2^4\cdot3^2\cdot5\cdot11$ & $1$   \\
  \hline
\end{tabular}

\end{table}

If $K/H\cong A_5$ or $ A_6$, then $11\in \pi(H)$, and $|H_{11}|=11$. Since $H$ is nilpotent, we know $H_{11}\unlhd G$. Note that $|G/C_G(H_{11})|\big||\mathrm{Aut}(H_{11})|$, one has that $\{2, 3,  11\}\subseteq \pi(C_G(H_{11}))$.  Since $\Gamma(G)$ is disconnected, we have  $5\notin\pi(C_G(H_{11}))$. Consider the action on $C_G(H_{11})$ by an element $g$ of order $5.$ Applying Lemma 2.4,  $C_G(H_{11})$ has a $<$$g$$>-$invariant $Sylow\ 3-$subgroup, but  $5\nmid|GL(2,3)|$, thus, the action is trivial, and then $15\in\pi_e(G)$, which means  $\Gamma(G)$ is connected, a contradiction.

If $K/H\cong L_2(11)$, then $|H_3|=3$, $H_3\unlhd G$. We get $t(G)=1$ by $|G/C_G(H_3)|\big|2$, a contradiction.

Now we have $K/H\cong M_{11},$ so $G\cong S\cong M_{11 }$ by $|G|=|M_{11}|,$ as desired.

\textbf{Step 2} If $S= M_{12}$, then $G\cong M_{12}$.

Since $|G|=|M_{12}|=2^6\cdot 3^3\cdot5\cdot11$,  we get by \cite{ATLAS}  that $K/H$ is isomorphic to one of the  groups in Table \ref{tb:M12}.

\begin{table}[h]
\setlength{\abovecaptionskip}{10pt}
\setlength{\belowcaptionskip}{0pt}
\centering
\caption{The non-abelian simple groups $K/H$ with orders dividing $|M_{12}|$~}\label{tb:M12}
\begin{tabular}
{p{5.5cm}<{\centering}p{5.5cm}<{\centering}p{4.5cm}<{\centering}p{3.5cm}<{\centering}}
  \hline
$K/H$      & $|K/H|$
  & $|\mathrm{Out}(K/H)|$      \\
  \hline
  $A_5$       & $2^{2}\cdot3\cdot5$ & $2$ \\
  $A_6$       & $2^{3}\cdot3^2\cdot5$ & $4$ \\
  $L_2(11)$   & $2^{2}\cdot3\cdot5\cdot11$ & $2$\\
  $M_{11}$    & $2^4\cdot3^2\cdot5\cdot11$ & $1$   \\
  $M_{12}$    & $2^6\cdot3^3\cdot5\cdot11$ & $2$   \\
  \hline
\end{tabular}

\end{table}

 If $K/H$ is isomorphic to one of the groups in Table \ref{tb:M12} except  $M_{12}$, then $|(K/H)_3|\big|3^2$ and $3\nmid|\rm{Out}(K/H)|$, hence, $3\in\pi(H)$. Clearly,   $|H_3|=3$ or $3^2$,  also $H_3\unlhd G$. Note that $|G/C_G(H_3)|\big||\mathrm{Aut}(H_3)|,$ and $|\mathrm{Aut}(H_3)|\big|2^4\cdot3,$ we have  $\pi(G)=\pi(C_G(H_3)),$ thus, $\Gamma(G)$ is connected, a contradiction.

Now we have $K/H\cong M_{12},$ so that $G\cong S \cong M_{12}$, as desired.

\textbf{Step 3} If $S= M_{22}$, then $G\cong M_{22}$.

Since $|G|=|M_{22}|=2^7\cdot 3^2\cdot5\cdot7\cdot11$. by the same reason, we get that $K/H$ is isomorphic to one of the  groups in Table \ref{tb:M22}.

\begin{table}[h]
\setlength{\abovecaptionskip}{10pt}
\setlength{\belowcaptionskip}{0pt}
\centering
\caption{The non-abelian simple groups $K/H$ with orders dividing $|M_{22}|$~}\label{tb:M22}
\begin{tabular}
{p{5.5cm}<{\centering}p{5.5cm}<{\centering}p{4.5cm}<{\centering}p{3.5cm}<{\centering}}
  \hline
$K/H$      & $|K/H|$
  & $|\mathrm{Out}(K/H)|$      \\
  \hline
  $A_5$       & $2^{2}\cdot3\cdot5$ & $2$ \\
  $L_3(2)$       & $2^{3}\cdot3\cdot7$ & $2$ \\
  $A_6$       & $2^{3}\cdot3^2\cdot5$ & $4$ \\
  $L_2(8)$       & $2^{3}\cdot3^2\cdot7$ & $3$ \\
  $L_2(11)$   & $2^{2}\cdot3\cdot5\cdot11$ & $2$\\
  $A_7$   & $2^{3}\cdot3^2\cdot5\cdot7$ & $2$\\
  $M_{11}$    & $2^4\cdot3^2\cdot5\cdot11$ & $1$   \\
  $A_8$   & $2^{6}\cdot3^2\cdot5\cdot7$ & $2$\\
  $L_3(4)$   & $2^{6}\cdot3^2\cdot5\cdot7$ & $2^2\cdot3$\\
  $M_{22}$    & $2^7\cdot3^2\cdot5\cdot7\cdot11$ & $2$   \\
  \hline
\end{tabular}

\end{table}

From  Table \ref{tb:M22}, we conclude that $11\in\pi(H)$ while $K/H$ is isomorphic to one of the groups: $A_5$, $L_3(2)$, $A_6$, $L_2(8)$, $A_7$, $A_8$ and $L_3(4)$.  By using the same arguments as the second paragraph in Step 1, we also get  contradictions.

If $K/H\cong$$L_2(11)$ or $M_{11}$, then we see  that $7\in\pi(H).$  Surely,  $|H_7|=7$ and  $H_7\unlhd G$. So $\pi(G)=\pi(C_G(H_7))$ for $|G/C_G(H_7)|\big||\mathrm{Aut}(H_7)|$. Consequently,  $\Gamma(G)$ is connected, a contradiction.

Now we have $K/H\cong M_{22},$ then $G\cong S\cong M_{22}$, as desired.

\textbf{Step 4} If $S= M_{23}$, then $G\cong M_{23}$.

In this case, $|G|=|M_{23}|=2^7\cdot 3^2\cdot5\cdot7\cdot11\cdot23$. By \cite{ATLAS}, we get that $K/H$ is isomorphic to one of the  groups in Table \ref{tb:M23}.

\begin{table}[H]
\setlength{\abovecaptionskip}{10pt}
\setlength{\belowcaptionskip}{0pt}
\centering
\caption{The non-abelian simple groups $K/H$ with orders dividing $|M_{23}|$~}\label{tb:M23}
\begin{tabular}
{p{5.5cm}<{\centering}p{5.5cm}<{\centering}p{4.5cm}<{\centering}p{3.5cm}<{\centering}}
  \hline
 $K/H$      & $|K/H|$
  & $|\mathrm{Out}(K/H)|$      \\
  \hline
  $A_5$       & $2^{2}\cdot3\cdot5$ & $2$ \\
  $L_3(2)$       & $2^{3}\cdot3\cdot7$ & $2$ \\
  $A_6$       & $2^{3}\cdot3^2\cdot5$ & $4$ \\
  $L_2(8)$       & $2^{3}\cdot3^2\cdot7$ & $3$ \\
  $L_2(11)$   & $2^{2}\cdot3\cdot5\cdot11$ & $2$\\
  $A_7$   & $2^{3}\cdot3^2\cdot5\cdot7$ & $2$\\
  $L_2(23)$   & $2^{3}\cdot3\cdot11\cdot23$ & $2$\\
  $M_{11}$    & $2^4\cdot3^2\cdot5\cdot11$ & $1$   \\
  $A_8$   & $2^{6}\cdot3^2\cdot5\cdot7$ & $2$\\
  $L_3(4)$   & $2^{6}\cdot3^2\cdot5\cdot7$ & $2^2\cdot3$\\
  $M_{22}$    & $2^7\cdot3^2\cdot5\cdot7\cdot11$ & $2$   \\
  $M_{23}$    & $2^7\cdot3^2\cdot5\cdot7\cdot11\cdot23$ & $1$   \\
  \hline
\end{tabular}

\end{table}

 If $K/H$ is isomorphic to one of the groups in Table \ref{tb:M23} except   $L_2(23)$ and $M_{23}$, then $23\in\pi(H)$. Clearly,    $|H_{23}|=23$ and  $H_{23}\unlhd G.$  Since $|G/C_G(H_{23})|\big||\mathrm{Aut}(H_{23})|$, it follows that $\{2,\ 3,\ 5,\ 7,\ 23\}\subseteq \pi(C_G(H_{23})).$ Taking into account the fact that $\Gamma(G)$ is not connected, we have  $11\notin\pi(C_G(H_{23}))$. Now consider  the action of an element $g$ of order $11$ on $C_G(H_{23})$ and  applying Lemma \ref{ty4},  we see that $C_G(H_{23})$ has a $\left\langle g\right\rangle-$invariant $Sylow\ 7-$subgroup,  which implies that  $77\in\pi_e(G)$, therefore,  $\Gamma(G)$ is connected, a contradiction.

 Suppose that $K/H\cong L_2(23)$, we deduce that $7\in\pi(H)$. Surely,  $|H_7|=7$ and $H_7\unlhd G$. Furthermore, we conclude that  $\pi(G)=\pi(C_G(H_7))$ for $|G/C_G(H_7)|\big||\mathrm{Aut}(H_7)|.$ Consequently,  $\Gamma(G)$ is connected, a contradiction.

Now we have  $K/H\cong M_{23},$ then $G\cong S\cong M_{23}$, as desired.

\textbf{Step 5} If $S= M_{24}$ or $J_2$, then $G\cong S$.

In these cases, either  $|G|=|M_{24}|=2^{10}\cdot 3^3\cdot5\cdot7\cdot11\cdot23$ or $|G|=|J_{2}|=2^7\cdot 3^3\cdot5^2\cdot7$. By Lemma \ref{lemT}, we check the list of simple groups in \cite{ATLAS} and come to that $K/H$ is a simple group in Table \ref{tb:M24} and \ref{tb:J2}, respectively.

\begin{table}[h]
\setlength{\abovecaptionskip}{10pt}
\setlength{\belowcaptionskip}{0pt}
\centering
\caption{The non-abelian simple groups $K/H$ with orders dividing $|M_{24}|$~}\label{tb:M24}
\begin{tabular}
{p{5.5cm}<{\centering}p{5.5cm}<{\centering}p{4.5cm}<{\centering}p{3.5cm}<{\centering}}
  \hline
  $K/H$      & $|K/H|$
  & $|\mathrm{Out}(K/H)|$   \\
  \hline
  $A_5$       & $2^{2}\cdot3\cdot5$ & $2$ \\
  $L_3(2)$       & $2^{3}\cdot3\cdot7$ & $2$ \\
  $A_6$       & $2^{3}\cdot3^2\cdot5$ & $4$ \\
  $L_2(8)$       & $2^{3}\cdot3^2\cdot7$ & $3$ \\
  $L_2(11)$   & $2^{2}\cdot3\cdot5\cdot11$ & $2$\\
  $A_7$   & $2^{3}\cdot3^2\cdot5\cdot7$ & $2$\\
  $U_3(3)$       & $2^{5}\cdot3^3\cdot7$ & $2$ \\
  $L_2(23)$   & $2^{3}\cdot3\cdot11\cdot23$ & $2$\\
  $M_{11}$    & $2^4\cdot3^2\cdot5\cdot11$ & $1$   \\
  $A_8$   & $2^{6}\cdot3^2\cdot5\cdot7$ & $2$\\
  $L_3(4)$   & $2^{6}\cdot3^2\cdot5\cdot7$ & $2^2\cdot3$\\
  $M_{22}$    & $2^7\cdot3^2\cdot5\cdot7\cdot11$ & $2$   \\
  $M_{23}$    & $2^7\cdot3^2\cdot5\cdot7\cdot11\cdot23$ & $1$   \\
  $M_{24}$    & $2^{10}\cdot3^3\cdot5\cdot7\cdot11\cdot23$ & $1$   \\
  \hline
\end{tabular}

\end{table}

\begin{table}[h]
\setlength{\abovecaptionskip}{10pt}
\setlength{\belowcaptionskip}{0pt}
\centering
\caption{The non-abelian simple groups $K/H$ with orders dividing $|J_{2}|$~}\label{tb:J2}
\begin{tabular}
{p{5.5cm}<{\centering}p{5.5cm}<{\centering}p{4.5cm}<{\centering}p{3.5cm}<{\centering}}
  \hline
 $K/H$       & $|K/H|$
  & $|\mathrm{Out}(K/H)|$     \\
  \hline
  $A_5$       & $2^{2}\cdot3\cdot5$ & $2$ \\
  $L_2(7)$       & $2^{3}\cdot3\cdot7$ & $2$ \\
  $A_6$       & $2^{3}\cdot3^2\cdot5$ & $4$ \\
  $L_2(8)$   & $2^{3}\cdot3^2\cdot7$ & $3$\\
  $A_7$   & $2^{3}\cdot3^2\cdot5\cdot7$ & $2$\\
  $U_{3}(3)$    & $2^5\cdot3^3\cdot7$ & $2$   \\
  $A_{8}$    & $2^6\cdot3^2\cdot5\cdot7$ & $2$ \\
  $L_{3}(4)$    & $2^6\cdot3^2\cdot5\cdot7$ & $2^2\cdot3$\\
  $J_{2}$    & $2^7\cdot3^3\cdot5^2\cdot7$ & $2$   \\
  \hline
\end{tabular}

\end{table}

 Suppose that $K/H$ is isomorphic to one of the groups in Table \ref{tb:M24} and Table \ref{tb:J2} except $L_2(8)$, $L_3(4)$, $U_3(3)$ and $S$, then $|(K/H)_3|\big|3^2$ and $3\nmid |\rm{Out}(K/H)|$, hence, $3\in\pi(H)$. Moreover,  $|H_3|=3$ or $3^2.$  By using the same arguments as the second paragraph in Step 2, we can get  contradictions.

Suppose that $K/H\cong L_2(8)$ or $U_{3}(3)$, then $5\in\pi(H)$ and $|H_5|=5$ or $5^2$. Clearly,   $H_5\unlhd G.$ Thus, we get that  $t(G)=1$ for $|G/C_G(H_5)|\big||\mathrm{Aut}(H_5)|$ and $|\rm{GL}(2,5)|=2^5\cdot3\cdot5$. Consequently, $\Gamma(G)$ is connected, a contradiction.

Suppose that $K/H\cong L_3(4)$,  then $11\in\pi(H)$.  Note that $5\nmid|\rm{GL(3,3)}|$, using the same argument as the second paragraph in Step 1, we also get  a contradiction.

Now we have  $K/H\cong S$, then $G\cong S$ by $|G|=|K/H|$, as desired.
\end{proof}

\begin{lemma} \label{le2} Let $G$ be a group and    $S$   one of  groups: $J_1$, $J_3$ and $J_4$.  If $|G|=|S|$ and the prime graph of $G$ is disconnected, then $G\cong S$.\end{lemma}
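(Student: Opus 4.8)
The plan is to follow exactly the template established in Lemma~\ref{le1}, now applied to the three Janko groups $J_1$, $J_3$, $J_4$ whose orders are $|J_1|=2^3\cdot3\cdot5\cdot7\cdot11\cdot19$, $|J_3|=2^7\cdot3^5\cdot5\cdot17\cdot19$ and $|J_4|=2^{21}\cdot3^3\cdot5\cdot7\cdot11^3\cdot23\cdot29\cdot31\cdot37\cdot43$. By Corollary~\ref{ty12} the group $G$ has a normal series $1\unlhd H\unlhd K\unlhd G$ with $K/H$ a non-abelian simple group, $H$ nilpotent, $G/K$ a $\pi_1$-group, and $|G/K|\big||\mathrm{Out}(K/H)|$. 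Since $J_1$ and $J_3$ have order below $10^{25}$ and do not divide $|M|$ (they are not $Ly$ or $J_4$), Lemma~\ref{lemT}(II) tells me that $K/H$ lies in the \cite{ATLAS} list; for $J_4$ I would invoke Lemma~\ref{lemT}(II) as well, since $J_4$ is handled there explicitly. So for each of the three cases I first compile, by scanning \cite{ATLAS} for non-abelian simple groups whose order divides $|S|$, the finite list of candidates for $K/H$, presenting each in a table with columns $K/H$, $|K/H|$ and $|\mathrm{Out}(K/H)|$, just as in the previous lemma.

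The core of the argument is then to eliminate every candidate except $K/H\cong S$ itself. For each proper candidate I would identify a prime $p$ that divides $|S|$ but whose full power is not captured by $|K/H|$ together with $|\mathrm{Out}(K/H)|$, forcing $p\in\pi(H)$; because $H$ is nilpotent, the corresponding Sylow subgroup $H_p$ is characteristic in $H$ and hence normal in $G$. I then examine $G/C_G(H_p)\hookrightarrow \mathrm{Aut}(H_p)$. In the easy subcase $|\mathrm{Aut}(H_p)|$ is divisible by enough primes of $\pi(G)$ that $\pi(C_G(H_p))=\pi(G)$, which immediately gives an edge joining every pair of primes and contradicts the disconnectedness of $\Gamma(G)$. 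In the harder subcase only part of $\pi(G)$ centralizes $H_p$, and I would pick a prime $r$ outside $\pi(C_G(H_p))$ together with a prime $s\in\pi(C_G(H_p))$, let an element $g$ of order $r$ act coprimely on $C_G(H_p)$, invoke Lemma~\ref{ty4} to obtain a $\langle g\rangle$-invariant Sylow $s$-subgroup, and use Lemma~\ref{ty5} (via the arithmetic fact $r\nmid|\mathrm{GL}(m,s)|$) to force the action to be trivial, producing an element of order $rs$ and again contradicting disconnectedness. This is precisely the mechanism used in Steps~1--5 of Lemma~\ref{le1}.

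The main obstacle I anticipate is the sheer size and number of candidates for $J_4$: with ten prime divisors and a very large $2$-part, the \cite{ATLAS} scan may yield many simple groups $K/H$ (including several Lie-type and sporadic sections such as $M_{22}$, $M_{24}$, $L_2(23)$, $L_2(32)$, $U_3(11)$, $M_{23}$, and possibly $J_4$ itself), and I must choose the eliminating prime carefully in each row so that the $\mathrm{GL}$-arithmetic works out. For the large primes $23,29,31,37,43$ dividing $|J_4|$ only to the first power, the normal-Sylow argument is clean because $|\mathrm{Aut}(H_p)|$ is cyclic of order $p-1$ and the disconnectedness hypothesis quickly forces the centralizer to omit a prime, setting up the Lemma~\ref{ty4}/\ref{ty5} contradiction; the genuinely delicate rows will be those candidates whose order already accounts for most of the small primes, where I must verify that the leftover prime $p$ really is forced into $H$ and that a suitable non-dividing relation $r\nmid|\mathrm{GL}(m,s)|$ is available. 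Once every proper candidate is excluded, $K/H\cong S$ remains, and since $|K/H|=|S|=|G|$ the series collapses to $H=1$, $K=G$, giving $G\cong S$ as desired.
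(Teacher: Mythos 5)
Your proposal is correct and follows essentially the same route as the paper's proof: Corollary \ref{ty12} and Lemma \ref{lemT}(II) to confine $K/H$ to the ATLAS candidate tables, then elimination of every proper candidate by forcing a prime $p$ into the nilpotent group $H$, so that $H_p\unlhd G$, and exploiting $|G/C_G(H_p)|\big|\,|\mathrm{Aut}(H_p)|$ together with the coprime-action Lemmas \ref{ty4} and \ref{ty5} to contradict disconnectedness (the paper uses $p=19$ for $J_1$ and $J_3$, and $p=43$, resp.\ $p=5$ for the $L_2(43)$ row, in the $J_4$ case). The only slip is your parenthetical claim that $|J_1|$ and $|J_3|$ do not divide $|M|$ --- they do, which is precisely why the proof of Lemma \ref{lemT}(II) covers them --- but this is immaterial, since the statement of Lemma \ref{lemT}(II) applies to every sporadic $S\neq M$.
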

\begin{proof} Similar to Lemma \ref{le1}, we need consider the possibilities of the unique simple section $K/H$ for each $S$, and  have the following steps.

\textbf{Step 1} If $S= J_1$, then $G\cong J_1$.

If $S= J_1$, then $|G|=|J_{1}|=2^3\cdot3\cdot5\cdot7\cdot11\cdot19$.  Applying Lemma \ref{lemT}, we get by \cite{ATLAS} that $K/H$ is isomorphic to one of the  groups in Table \ref{tb:J1}.

\begin{table}[h]
\setlength{\abovecaptionskip}{10pt}
\setlength{\belowcaptionskip}{0pt}
\centering
\caption{The non-abelian simple groups $K/H$ with orders dividing $|J_{1}|$~}\label{tb:J1}

\begin{tabular}
{p{5.5cm}<{\centering}p{5.5cm}<{\centering}p{4.5cm}<{\centering}p{3.5cm}<{\centering}}
  \hline
  $K/H$      & $|K/H|$
  & $|\mathrm{Out}(K/H)|$     \\
  \hline
  $A_5$       & $2^{2}\cdot3\cdot5$ & $2$ \\
  $L_3(2)$       & $2^{3}\cdot3\cdot7$ & $2$ \\
  $L_2(11)$   & $2^{2}\cdot3\cdot5\cdot11$ & $2$\\
  $J_{1}$    & $2^3\cdot3\cdot5\cdot7\cdot11\cdot19$ & $1$   \\
  \hline
\end{tabular}

\end{table}

If $K/H$ is isomorphic to one of $ A_5$, $L_3(2)$ and $L_2(11)$, then $19\in \pi(H)$ and $|H_{19}|=19$. Since  $H_{19}\unlhd G$, we have $|G/C_G(H_{19})|\big|2\cdot3^2$, thus, $\{2,5,7,11,19\}\subseteq \pi(C_G(H_{19}))$. The fact  that $\Gamma(G)$ is disconnected yields $3\not\in\pi(C_G(H_{19}))$. Let  a $3-$element $g$ act on $C_G(H_{19})$, then, by Lemma \ref{ty4}, $C_G(H_{19})$ has a $\left\langle g\right\rangle-$invariant  $\rm{Sylow}\ 5-$subgroup. Since $5\parallel |G|$, which implies that $15\in\pi_e(G)$, therefore, $\Gamma(G)$ is connected, a contradiction.

Now we have $K/H\cong J_{1},$ so $G\cong S\cong J_{1 }$ by $|G|=|J_{1}|,$ as desired.

\textbf{Step 2} If $S= J_3$, then $G\cong J_3$.

In this case, $|G|=|J_{3}|=2^7\cdot 3^5\cdot5\cdot17\cdot19$. Applying Lemma \ref{lemT}, we get by \cite{ATLAS} that  $K/H$ is isomorphic to one of the  groups in Table \ref{tb:J3}.

\begin{table}[h]
\setlength{\abovecaptionskip}{10pt}
\setlength{\belowcaptionskip}{0pt}
\centering
\caption{The non-abelian simple groups with orders dividing $|J_{3}|$~}\label{tb:J3}
\begin{tabular}
{p{5.5cm}<{\centering}p{5.5cm}<{\centering}p{4.5cm}<{\centering}p{3.5cm}<{\centering}}
  \hline
  $K/H$       & $|K/H|$
  & $|\mathrm{Out}(K/H)|$     \\
  \hline
  $A_5$       & $2^{2}\cdot3\cdot5$ & $2$ \\
  $A_6$       & $2^{3}\cdot3^2\cdot5$ & $4$ \\
  $L_2(17)$   & $2^{4}\cdot3^2\cdot17$ & $2$\\
  $L_2(19)$   & $2^{2}\cdot3^2\cdot5\cdot19$ & $2$\\
  $L_{2}(16)$    & $2^4\cdot3\cdot5\cdot17$ & $4$   \\
  $U_4(2)$   & $2^{6}\cdot3^4\cdot5$  & $2$\\
  $J_{3}$    & $2^7\cdot3^5\cdot5\cdot17\cdot19$ & $2$   \\
  \hline
\end{tabular}

\end{table}

 While $K/H$ is isomorphic to one of the groups in Table \ref{tb:J3} except $L_2(19)$ and $J_3$, we conclude that $19\in\pi(H)$. Then we deduce that $t(G)=1$ since $|G/C_G(H_{19})|\big|2\cdot3^2$, a contradiction.

 Suppose that $K/H\cong L_2(19)$, then $17\in\pi(H)$, also we get a contradiction by $t(G)=1$ as $|G/C_G(H_{17})|\big|2^4$.

 Therefore, $K/H\cong J_3$, since $|G|=|K/H|=|J_3|$, then  $G\cong S\cong J_3.$

\textbf{Step 3} If $S= J_4$, then $G\cong J_4$.

 If $S= J_4$, then $|G|=|J_{4}|=2^{21}\cdot 3^3\cdot5\cdot7\cdot11^3\cdot23\cdot29\cdot31\cdot37\cdot43$. Applying Lemma \ref{lemT}, we get by \cite{ATLAS} that $K/H$ is isomorphic to one of the  groups in Table \ref{tb:J4}.

\begin{table}[H]
\setlength{\abovecaptionskip}{10pt}
\setlength{\belowcaptionskip}{0pt}
\centering
\caption{The non-abelian simple groups $K/H$ of orders dividing $|J_{4}|$~}\label{tb:J4}
\begin{tabular}{ccc|ccc}
\hline
  $K/H$       & $|K/H|$
  & $|\mathrm{Out}(K/H)|$   & $K/H$       & $|K/H|$
  & $|\mathrm{Out}(K/H)|$ \\
  \hline
   $A_5$       & $2^{2}\cdot3\cdot5$ & $2$ & $A_8$  &  $2^{6}\cdot3^2\cdot5\cdot7$ & $2$ \\
  $L_3(2)$       & $2^{3}\cdot3\cdot7$ & $2$ &$L_3(4)$    & $2^{6}\cdot3^2\cdot5\cdot7$ & $2^2\cdot3$ \\
  $A_6$       & $2^{3}\cdot3^2\cdot5$ & $4$ & $L_2(32)$  &$2^{5}\cdot3\cdot11\cdot31$ & $5$\\
  $L_2(8)$       & $2^{3}\cdot3^2\cdot7$ & $3$  & $L_2(43)$   & $2^{2}\cdot3\cdot7\cdot11\cdot43$& $2$\\
  $L_2(11)$   & $2^{2}\cdot3\cdot5\cdot11$ & $2$ & $M_{12}$  &$2^{6}\cdot3^3\cdot5\cdot11$ & $2$\\
  $A_7$   & $2^{3}\cdot3^2\cdot5\cdot7$ & $2$ & $M_{22}$   &$2^7\cdot3^2\cdot5\cdot7\cdot11$ & $2$\\
  $U_3(3)$   & $2^{5}\cdot3^3\cdot7$ & $2$ &$L_{5}(2)$   &$2^{10}\cdot3^2\cdot5\cdot\cdot7\cdot31$ &$2$\\
  $L_2(23)$   & $2^{3}\cdot3\cdot11\cdot23$ & $2$ &$M_{23}$   & $2^7\cdot3^2\cdot5\cdot7\cdot11\cdot23$& $1$\\
  $M_{11}$    & $2^4\cdot3^2\cdot5\cdot11$ & $1$ &$U_{3}(11)$   &$2^{5}\cdot3^2\cdot5\cdot11^3\cdot37$ &$2\cdot3$ \\
  $L_{2}(29)$    & $2^2\cdot3\cdot5\cdot7\cdot29$ & $2$   & $M_{24}$   &$2^{10}\cdot3^3\cdot5\cdot7\cdot11\cdot23$ & $1$\\
  $L_{2}(31)$    & $2^5\cdot3\cdot5\cdot31$ & $2$ & $J_4$   &$2^{21}\cdot3^3\cdot5\cdot7\cdot11^3\cdot23\cdot29\cdot31\cdot37\cdot43$ & $1$ \\
 \hline
\end{tabular}
\end{table}

 If $K/H$ is isomorphic to one of the groups in Table \ref{tb:J4} except   $L_2(43)$ and $J_{4}$, then $43\in\pi(H)$ and $|H_{43}|=43$. Also $H_{43}\unlhd G.$  Note that $|G/C_G(H_{43})|\big||\mathrm{Aut}(H_{43})|$, we have $\pi(G)\smallsetminus\{7\}\subseteq \pi(C_G(H_{43})).$ Since $\Gamma(G)$ is not connected, we see that   $7\notin\pi(C_G(H_{43}))$. Considering the action on $C_G(H_{43})$ by an element $g$ of order $7$. Lemma \ref{ty4} indicates that $C_G(H_{43})$ has a $\left\langle g\right\rangle-$invariant $Sylow\ 23-$subgroup,  which implies   $23\cdot7\in\pi_e(G)$, therefore,  $\Gamma(G)$ is connected, a contradiction.

 If $K/H\cong L_2(43)$, then $5\in\pi(H)$ and $|H_5|=5$. We immediately get a contradiction by $t(G)=1$ since $|G/C_G(H_5)|\big|2^2$.

At last, $K/H\cong J_{4},$ then $G\cong S\cong J_{4}$, as desired.
\end{proof}
\begin{lemma} \label{le3}Let $G$ be a group and   $S$  one of $Co_{1}$, $Co_{2}$, $Co_3$, $Fi_{23}$.  If $|G|=|S|$ and the prime graph of $G$ is disconnected, then $G\cong S$.\end{lemma}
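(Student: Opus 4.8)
The plan is to follow verbatim the template of Lemmas \ref{le1} and \ref{le2}, treating the four groups in four separate steps. In each step I first invoke Corollary \ref{ty12} to obtain the normal series $1\unlhd H\unlhd K\unlhd G$ with its non-abelian simple section $K/H$ satisfying property (*). Since none of $Co_1,Co_2,Co_3,Fi_{23}$ is the Monster, part (II) of Lemma \ref{lemT} guarantees that $K/H$ occurs in the list of \cite{ATLAS}; I then produce, for each $S$, the table of all non-abelian simple groups whose order divides $|S|$ together with the corresponding $|\mathrm{Out}(K/H)|$, reading off $|Co_1|=2^{21}\cdot3^9\cdot5^4\cdot7^2\cdot11\cdot13\cdot23$, $|Co_2|=2^{18}\cdot3^6\cdot5^3\cdot7\cdot11\cdot23$, $|Co_3|=2^{10}\cdot3^7\cdot5^3\cdot7\cdot11\cdot23$ and $|Fi_{23}|=2^{18}\cdot3^{13}\cdot5^2\cdot7\cdot11\cdot13\cdot17\cdot23$. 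The task is then to discard every candidate except $K/H\cong S$.

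The elimination of a proper candidate $K/H\neq S$ uses the same two-pronged centralizer argument as before. I pick a prime $p$ — typically the largest, $p=23$, with fall-backs $p=13,11$ for the Conway groups and $p=23,17,13$ for $Fi_{23}$ — chosen so that $p\nmid|K/H|\cdot|\mathrm{Out}(K/H)|$, which forces $p\in\pi(H)$. Since $p\parallel|G|$ and $H$ is nilpotent, $H_p\unlhd G$ with $|H_p|=p$, so $C_G(H_p)\unlhd G$ and $|G/C_G(H_p)|$ divides $|\mathrm{Aut}(H_p)|=p-1$. Every prime of $\pi(G)$ not dividing $p-1$ therefore lies in $\pi(C_G(H_p))$, and since $H_p$ is central in $C_G(H_p)$ each such prime $q$ produces an element of order $pq$, i.e. an edge joining $q$ to $p$. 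If these edges already span $\pi(G)$ we obtain $t(G)=1$ outright, contradicting the hypothesis. Otherwise I isolate a prime $q$ still cut off from $p$, let an element $g$ of order $q$ act coprimely on $C_G(H_p)$, extract a $\langle g\rangle$-invariant Sylow $r$-subgroup by Lemma \ref{ty4}, and use $q\nmid|\mathrm{GL}(m,r)|$ together with Corollary \ref{cor51} (or Lemma \ref{ty5}) to force the action trivial, yielding an element of order $qr$ that reconnects the graph — again a contradiction. Once all proper candidates are ruled out we have $K/H\cong S$; comparing orders with $|G|=|S|$ forces $H=1$ and $K=G$, whence $G\cong S$.

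The routine but sizeable obstacle is the length of the candidate lists: $Co_1$ and especially $Fi_{23}$ admit many simple sections (numerous $A_n$, several $L_2(q)$, $L_3$, $U_3(q)$, $U_4$, $S_4$ and Mathieu sections), so completeness of the enumeration and the bookkeeping of which prime to isolate in each row is where most of the labor lies. The genuinely delicate rows are those where $K/H$ already contains the top prime $23$ (for instance $K/H\cong L_2(23)$, $M_{23}$, $M_{24}$, $Co_3$, $Co_2$, or $Fi_{22}$ for $Fi_{23}$): there $23\nmid|H|$, so I must fall back to a second isolated prime — $13$ or $11$ for the Conway groups, $17$ or $13$ for $Fi_{23}$ — and rerun the centralizer argument, verifying in each case that the relevant $|\mathrm{GL}(m,r)|$ is not divisible by the acting prime so that the coprime action collapses. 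The remaining work is mechanical checking of divisibilities and of outer-automorphism orders against the ATLAS.
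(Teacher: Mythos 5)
Your overall strategy is exactly the paper's: Corollary \ref{ty12} plus Lemma \ref{lemT}(II) reduce $K/H$ to ATLAS candidates, and each proper candidate is killed by forcing an isolated prime $p$ into $\pi(H)$, using $H_p\unlhd G$, the bound $|G/C_G(H_p)|\big||\mathrm{Aut}(H_p)|$, and, when some prime $q$ remains cut off, the coprime-action argument (Lemma \ref{ty4} together with Lemma \ref{ty5}/Corollary \ref{cor51}) to manufacture a forbidden edge. The paper only organizes the cases differently (it treats $Co_2$, $Co_3$, $Fi_{23}$ simultaneously, first with $p=23$, then dropping to $p=5$), and your terminal step ($K/H\cong S$ with $|G|=|S|$ forces $H=1$, $K=G$) agrees with it.

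There is, however, one concrete error that would stall your proof at precisely the rows you call delicate. For $S=Co_2$ or $Co_3$ the advertised fall-back primes $13$ and $11$ are both unavailable: $13\notin\pi(Co_2)=\pi(Co_3)=\{2,3,5,7,11,23\}$, so there is no prime $13$ to isolate at all, and $11$ divides $|L_2(23)|$, $|M_{23}|$ and $|M_{24}|$, so for these candidates $11\nmid |H|$ (for instance, for $K/H\cong M_{23}$ inside $Co_2$ one computes $|H|\cdot|G/K|=2^{11}\cdot3^4\cdot5^2$). Your own selection criterion then pushes you down to $p=5$, where the other half of your template also breaks as stated: since $5^3$ exactly divides $|Co_2|$ and $|Co_3|$, the subgroup $H_5$ need not have prime order, so the bound $|\mathrm{Aut}(H_p)|=p-1$ must be replaced by $|G/C_G(H_5)|\big||\mathrm{GL}(m,5)|$ with $m\leq 3$. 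This repair does close the case: $\pi(\mathrm{GL}(3,5))=\{2,3,5,31\}$ with $|\mathrm{GL}(3,5)|=2^7\cdot3\cdot5^3\cdot31$, so $7,11,23$ centralize $H_5$, and comparing the powers of $2$ and $3$ shows $2,3\in\pi(C_G(H_5))$ as well, whence $t(G)=1$. This is exactly what the paper does for $Co_2$, $Co_3$, $Fi_{23}$ (and for $Co_1$ it falls back to $p=7$ with $|\mathrm{GL}(2,7)|=2^5\cdot3^2\cdot7$). Your choices do work elsewhere: $p=13$ is a valid fall-back for $Co_1$ and $p=17$ for $Fi_{23}$; only for $Co_2$ and $Co_3$ must the argument be rerun with $p=5$ and the $\mathrm{GL}$ bound rather than with $13$ or $11$.
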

\begin{proof} We write the proof upon what $S$ is step by step.

\textbf{Step 1} If $S=Co_{1}$, then $G\cong Co_1$.

 By assumption, $|G|=|Co_{1}|=2^{21}\cdot 3^9\cdot5^4\cdot7^2\cdot11\cdot13\cdot23$. Applying Lemma \ref{lemT} and checking \cite{ATLAS}, we get that $K/H$ is isomorphic to one of the  groups in Table \ref{tb:Co1}.

\begin{table}[h]
\setlength{\abovecaptionskip}{10pt}
\setlength{\belowcaptionskip}{0pt}
\centering
\caption{The non-abelian simple groups $K/H$ of orders dividing $|Co_1|$~}\label{tb:Co1}

\begin{tabular}{ccc|ccc}
 \hline
  $K/H$       & $|K/H|$
  & $|\mathrm{Out}(K/H)|$&$K/H$       & $|K/H|$
  & $|\mathrm{Out}(K/H)|$     \\
  \hline
  $A_5$       & $2^{2}\cdot3\cdot5$ & $2$ &$G_2(3)$&$2^6\cdot3^6\cdot7\cdot13$&$2$\\
  $L_2(7)$ & $2^3\cdot3\cdot7$&$2$&$S_4(5)$&$2^6\cdot3^2\cdot5^4\cdot13$&$2$\\
  $A_6$       & $2^{3}\cdot3^2\cdot5$ & $4$ & $L_4(3)$&$2^7\cdot3^6\cdot5\cdot13$&$2^2$\\
  $L_2(8)$& $2^{3}\cdot3^2\cdot7$ & $3$  &$M_{23}$   & $2^7\cdot3^2\cdot5\cdot7\cdot11\cdot23$& $1$\\
  $L_2(11)$   & $2^{2}\cdot3\cdot5\cdot11$ & $2$ &$U_5(2)$&$2^{10}\cdot3^5\cdot5\cdot11$ &$2$\\
  $L_2{(13)}$&$2^2\cdot3\cdot7\cdot13$&$2$&$^2F_4(2)^{\prime}$&$2^{11}\cdot3^3\cdot5^2\cdot13$&$2$\\
  $A_7$   & $2^{3}\cdot3^2\cdot5\cdot7$ & $2$ &$A_{11}$&$2^7\cdot3^4\cdot5^2\cdot7\cdot11$&$2$\\
  $L_3(3)$&$2^4\cdot3^3\cdot13$&$2$&$L_3(9)$&$2^7\cdot3^6\cdot5\cdot7\cdot13$&$2^2$\\
  $U_3(3)$   & $2^{5}\cdot3^3\cdot7$ & $2$ &$HS$&$2^9\cdot3^2\cdot5^3\cdot7\cdot11$&$2$\\
  $L_2(23)$   & $2^{3}\cdot3\cdot11\cdot23$ & $2$ &$O_8^{+}(2)$&$2^{12}\cdot3^5\cdot5^2\cdot7$&$2\cdot3$\\
  $L_2(25)$&$2^3\cdot3\cdot5^2\cdot13$&$2^2$&$^3D_4{2}$&$2^{12}\cdot3^4\cdot7^2\cdot13$&$3$\\
  $M_{11}$    & $2^4\cdot3^2\cdot5\cdot11$ & $1$ &$A_{12}$&$2^9\cdot3^5\cdot5^2\cdot7\cdot11$&$2$\\
  $L_2(27)$&$2^2\cdot3^3\cdot7\cdot13$&$2\cdot3$&$M_{24}$&$2^{10}\cdot3^3\cdot5\cdot7\cdot11\cdot23$&$1$\\
  $A_8$  &  $2^{6}\cdot3^2\cdot5\cdot7$ & $2$ &$G_2(4)$&$2^{12}\cdot3^3\cdot5^2\cdot7\cdot13$&$2$\\ $M^cL$&$2^7\cdot3^6\cdot5^3\cdot7\cdot11$&$2$&$S_4(8)$&$2^{12}\cdot3^4\cdot5\cdot7^2\cdot13$&$2\cdot3$\\
  $L_3(4)$    & $2^{6}\cdot3^2\cdot5\cdot7$ & $2^2\cdot3$& $A_{13}$   &$2^{9}\cdot3^5\cdot5^2\cdot7\cdot11\cdot13$ & $2$ \\
  $U_{4}(2)$    & $2^2\cdot3^4\cdot5$ & $2$&$S_6(3)$&$2^9\cdot3^9\cdot5\cdot7\cdot13$&$2$\\
  $Sz(8)$&$2^6\cdot5\cdot7\cdot13$&$3$&$O_7(3)$&$2^9\cdot3^9\cdot5\cdot7\cdot13$&$2$\\
  $L_{2}(49)$    & $2^4\cdot3\cdot5^2\cdot7^2$ & $2^2$ &$U_6(2)$&$2^{15}\cdot3^6\cdot5\cdot7\cdot11$&$2\cdot3$\\
  $U_3(4)$&$2^6\cdot3\cdot5^2\cdot13$&$2^2$&$U_4(5)$&$2^5\cdot3^4\cdot5^4\cdot7\cdot13$&$2^2$\\
  $M_{12}$  &$2^{6}\cdot3^3\cdot5\cdot11$ & $2$&$A_{14}$&$2^{10}\cdot3^5\cdot5^2\cdot7^2\cdot11\cdot13$&$2$\\
  $U_3(5)$&$2^4\cdot3^2\cdot5^3\cdot7$&$2\cdot3$&$Suz$&$2^{13}\cdot3^7\cdot5^2\cdot7\cdot11\cdot13$&$2$\\
  $A_9$&$2^6\cdot3^4\cdot5\cdot7$&$2$&$Co_3$&$2^{10}\cdot3^7\cdot5^3\cdot7\cdot11\cdot23$&$1$\\
  $L_2(64)$&$2^6\cdot3^2\cdot5\cdot7\cdot13$&$2\cdot3$&$A_{15}$&$2^{10}\cdot3^6\cdot5^3\cdot7^2\cdot11\cdot13$&$2$\\
  $M_{22}$   &$2^7\cdot3^2\cdot5\cdot7\cdot11$ & $2$&$A_{16}$&$2^{14}\cdot3^6\cdot5^3\cdot7^2\cdot11\cdot13$&$2$\\
  $J_2$&$2^7\cdot3^3\cdot5^2\cdot7$&$2$&$Co_2$&$2^{18}\cdot 3^6\cdot5^3\cdot7\cdot11\cdot23$&$1$\\
  $S_6(2)$&$2^9\cdot3^4\cdot5\cdot7$&$1$&$Fi_{22}$&$2^{17}\cdot3^9\cdot5^2\cdot7\cdot11\cdot13$&$2$\\
  $A_{10}$&$2^7\cdot3^4\cdot5^2\cdot7$ &$2$&$Co_1$&$2^{21}\cdot 3^9\cdot5^4\cdot7^2\cdot11\cdot13\cdot23$&$1$\\
  $U_4(3)$&$2^7\cdot3^6\cdot5\cdot7$& $2^3$\\
\hline
\end{tabular}\end{table}

Suppose that $K/H$ is isomorphic to one of the groups in Table \ref{tb:Co1} except $Co_1$, $Co_{2}$, $Co_3$, $M_{24}$, $M_{23}$ and $L_2(23)$, then   $23\in\pi(H)$ and $|H_{23}|=23$. Also $H_{23}\unlhd G$. Since $|G/C_G(H_{23})|\big|22$, it follows that $\pi(G)\small\setminus\{11\}\subseteq\pi(C_G(H_{23}))$.  As  $\Gamma(G)$ is disconnected,   $11\not\in\pi(C_G(H_{23}))$.  By Lemma 2.4 and using the fact that $11\nmid|GL(2,7)|$, the action of an $11-$element $g$ on $C_G(H_{23})$ implies that $C_G(H_{23})$ has a $\left\langle g\right\rangle-$invariant $Sylow$ $7-$subgroup, so that  $11\cdot7\in\pi_e(G)$, which implies that $t(G)=1$, a contradiction.

 Suppose  that $K/H$ is isomorphic to one of  $Co_{2}$, $Co_3$, $M_{24}$, $M_{23}$ and $L_2(23)$, we deduce that $7\in\pi(H)$ and $|H_7|=7$ or $7^2$, $H_7\unlhd G$. By $|G/C_G(H_{7})|\big||GL(2,7)|$ and $|GL(2,7)|=2^5\cdot3^2\cdot7$, we have $\pi(G)=\pi(C_G(H_7))$, hence, $t(G)=1$, a contradiction.

If $K/H\cong Co_{1}$, then  $G\cong S$ by $|G|=|K/H|$.

\textbf{Step 2} If $S$ is one of $Co_{2}$, $Co_3$ and  $Fi_{23}$, then $G\cong S$.

Since $|G|$ equals one of
$|Co_{2}|=2^{18}\cdot 3^6\cdot5^3\cdot7\cdot11\cdot23$,
$|Co_3|=2^{10}\cdot 3^7\cdot5^3\cdot7\cdot11\cdot23$ and
$|Fi_{23}|=2^{18}\cdot 3^{13}\cdot5^2\cdot7\cdot11\cdot13\cdot17\cdot23$,
 by the same reason as above we get that $K/H$ is  one of simple groups in Table \ref{tb:Co2}-\ref{tb:Fi23} respectively.

 \begin{table}[H]
\setlength{\abovecaptionskip}{10pt}
\setlength{\belowcaptionskip}{0pt}
\centering
\caption{The non-abelian simple groups $K/H$ of orders dividing $|C o_2|$~}\label{tb:Co2}
\begin{tabular}{ccc|ccc}
 \hline
  $K/H$       & $|K/H|$
  & $|\mathrm{Out}(K/H)|$& $K/H$       & $|K/H|$
  & $|\mathrm{Out}(K/H)|$     \\
  \hline
  $A_5$       & $2^{2}\cdot3\cdot5$ & $2$& $A_9$&$2^6\cdot3^4\cdot5\cdot7$&$2$ \\
  $L_2(7)$ & $2^3\cdot3\cdot7$&$2$&$M_{22}$   &$2^7\cdot3^2\cdot5\cdot7\cdot11$&$2$\\
  $A_6$       & $2^{3}\cdot3^2\cdot5$ & $4$& $J_2$&$2^7\cdot3^3\cdot5^2\cdot7$&$2$\\
  $L_2(8)$& $2^{3}\cdot3^2\cdot7$ & $3$  &$S_6(2)$&$2^9\cdot3^4\cdot5\cdot7$&$1$\\
  $L_2(11)$& $2^{2}\cdot3\cdot5\cdot11$&$2$&$A_{10}$&$2^7\cdot3^4\cdot5^2\cdot7$&$2$\\
  $A_7$& $2^{3}\cdot3^2\cdot5\cdot7$&$2$&$U_4(3)$&$2^7\cdot3^6\cdot5\cdot7$&$2^3$\\
  $U_3(3)$&$2^{5}\cdot3^3\cdot7$&$2$&$M_{23}$&$2^7\cdot3^2\cdot5\cdot7\cdot11\cdot23$&$1$\\
  $L_2(23)$&$2^{3}\cdot3\cdot11\cdot23$&$2$&$U_5(2)$&$2^{10}\cdot3^5\cdot5\cdot11$&$2$\\ $M_{11}$&$2^4\cdot3^2\cdot5\cdot11$&$1$&$A_{11}$&$2^7\cdot3^4\cdot5^2\cdot7\cdot11$&$2$\\ $A_8$& $2^{6}\cdot3^2\cdot5\cdot7$&$2$&$HS$&$2^9\cdot3^2\cdot5^3\cdot7\cdot11$&$2$\\
  $L_3(4)$&$2^{6}\cdot3^2\cdot5\cdot7$&$2^2\cdot3$&$O_8^{+}(2)$&$2^{12}\cdot3^5\cdot5^2\cdot7$&$2\cdot3$\\ $U_{4}(2)$&$2^2\cdot3^4\cdot5$&$2$&$A_{12}$&$2^9\cdot3^5\cdot5^2\cdot7\cdot11$&$2$\\
  $M_{12}$&$2^{6}\cdot3^3\cdot5\cdot11$&$2$&$M_{24}$&$2^{10}\cdot3^3\cdot5\cdot7\cdot11\cdot23$&$1$\\
  $U_3(5)$&$2^4\cdot3^2\cdot5^3\cdot7$&$2\cdot3$&$M^cL$&$2^7\cdot3^6\cdot5^3\cdot7\cdot11$&$2$\\
  $U_6(2)$&$2^{15}\cdot3^6\cdot5\cdot7\cdot11$&$2\cdot3$&$Co_2$&$2^{18}\cdot 3^6\cdot5^3\cdot7\cdot11\cdot23$&$1$\\
  \hline
  \end{tabular}\end{table}

\begin{table}[H]
\setlength{\abovecaptionskip}{10pt}
\setlength{\belowcaptionskip}{0pt}
\centering
\caption{The non-abelian simple groups $K/H$ of orders dividing $|Co_3|$~}\label{tb:Co3}
\setlength{\tabcolsep}{4mm}{
\begin{tabular}{ccc|ccc}
 \hline
 $K/H$       & $|K/H|$
 & $|\mathrm{Out}(K/H)|$& $K/H$       & $|K/H|$
 & $|\mathrm{Out}(K/H)|$     \\
  \hline
  $A_5$       & $2^{2}\cdot3\cdot5$ & $2$& $A_9$&$2^6\cdot3^4\cdot5\cdot7$&$2$ \\
  $L_2(7)$ & $2^3\cdot3\cdot7$&$2$&$M_{22}$   &$2^7\cdot3^2\cdot5\cdot7\cdot11$&$2$\\
  $A_6$       & $2^{3}\cdot3^2\cdot5$ & $4$& $J_2$&$2^7\cdot3^3\cdot5^2\cdot7$&$2$\\
  $L_2(8)$& $2^{3}\cdot3^2\cdot7$ & $3$  &$S_6(2)$&$2^9\cdot3^4\cdot5\cdot7$&$1$\\
  $L_2(11)$& $2^{2}\cdot3\cdot5\cdot11$&$2$&$A_{10}$&$2^7\cdot3^4\cdot5^2\cdot7$&$2$\\
  $A_7$& $2^{3}\cdot3^2\cdot5\cdot7$&$2$&$U_4(3)$&$2^7\cdot3^6\cdot5\cdot7$&$2^3$\\
  $U_3(3)$&$2^{5}\cdot3^3\cdot7$&$2$&$M_{23}$&$2^7\cdot3^2\cdot5\cdot7\cdot11\cdot23$&$1$\\
  $L_2(23)$&$2^{3}\cdot3\cdot11\cdot23$&$2$&$U_5(2)$&$2^{10}\cdot3^5\cdot5\cdot11$&$2$\\ $M_{11}$&$2^4\cdot3^2\cdot5\cdot11$&$1$&$A_{11}$&$2^7\cdot3^4\cdot5^2\cdot7\cdot11$&$2$\\ $A_8$& $2^{6}\cdot3^2\cdot5\cdot7$&$2$&$HS$&$2^9\cdot3^2\cdot5^3\cdot7\cdot11$&$2$\\
  $L_3(4)$&$2^{6}\cdot3^2\cdot5\cdot7$&$2^2\cdot3$&$A_{12}$&$2^9\cdot3^5\cdot5^2\cdot7\cdot11$&$2$\\ $U_{4}(2)$&$2^2\cdot3^4\cdot5$&$2$&$M_{24}$&$2^{10}\cdot3^3\cdot5\cdot7\cdot11\cdot23$&$1$\\
  $M_{12}$&$2^{6}\cdot3^3\cdot5\cdot11$&$2$&$M^cL$&$2^7\cdot3^6\cdot5^3\cdot7\cdot11$&$2$\\
  $U_3(5)$&$2^4\cdot3^2\cdot5^3\cdot7$&$2\cdot3$&$Co_3$&$2^{10}\cdot3^7\cdot5^3\cdot7\cdot11\cdot23$&$1$\\
  \hline
  \end{tabular}}\end{table}

  \begin{longtabu}{p{1.5cm}p{2.6cm}p{1.9cm}|p{1.0cm}p{4.5cm}p{1.9cm}}

\caption{ The non-abelian simple groups $K/H$ of orders dividing $|Fi_{23}|$ }~\label{tb:Fi23}\\

\hline

 $K/H$       & $|K/H|$
 & $|\mathrm{Out}(K/H)|$&   $K/H$       & $|K/H|$
 & $|\mathrm{Out}(K/H)|$     \\
 \hline
 \endfirsthead
 \multicolumn{4}{l}{Table \ref{tb:Fi23}: (continued) ~} \\
 \hline
 $K/H$ & $|K/H|$
 & $|\mathrm{Out}(K/H)|$ & $K/H$  & $|K/H|$
 & $|\mathrm{Out}(K/H)|$ \\
 \hline
 \endhead
  $A_5$&$2^{2}\cdot3\cdot5$&$2$&$A_{10}$&$2^7\cdot3^4\cdot5^2\cdot7$&$2$\\
  $L_2(7)$ & $2^3\cdot3\cdot7$&$2$&$U_4(3)$&$2^7\cdot3^6\cdot5\cdot7$& $2^3$\\
  $A_6$&$2^{3}\cdot3^2\cdot5$&$4$&$G_2(3)$&$2^6\cdot3^6\cdot7\cdot13$&$2$\\
  $L_2(8)$&$2^{3}\cdot3^2\cdot7$&$3$&$L_4(3)$&$2^7\cdot3^6\cdot5\cdot13$&$2^2$\\
  $L_2(11)$&$2^{2}\cdot3\cdot5\cdot11$&$2$&$M_{23}$&$2^7\cdot3^2\cdot5\cdot7\cdot11\cdot23$&$1$\\ $L_2{13}$&$2^2\cdot3\cdot7\cdot13$&$2$&$U_5(2)$&$2^{10}\cdot3^5\cdot5\cdot11$ &$2$\\
  $L_2(17)$&$2^4\cdot3^2\cdot17$&$2$&$^2F_4(2)^{\prime}$&$2^{11}\cdot3^3\cdot5^2\cdot13$&$2$\\
  $A_7$&$2^{3}\cdot3^2\cdot5\cdot7$&$2$&$A_{11}$&$2^7\cdot3^4\cdot5^2\cdot7\cdot11$&$2$\\
  $L_2(16)$&$2^4\cdot3\cdot5\cdot17$&$2^2$&$L_3(9)$&$2^7\cdot3^6\cdot5\cdot7\cdot13$&$2^2$\\
  $L_3(3)$&$2^4\cdot3^3\cdot13$&$2$&$O_8^{+}(2)$&$2^{12}\cdot3^5\cdot5^2\cdot7$&$2\cdot3$\\
  $U_3(3)$&$2^{5}\cdot3^3\cdot7$&$2$&$O_8^{-}(2)$&$2^{12}\cdot3^4\cdot5\cdot7\cdot17$&$2$\\
  $L_2(23)$&$2^{3}\cdot3\cdot11\cdot23$&$2$&$S_6(2)$&$2^9\cdot3^4\cdot5\cdot7$&$1$\\
  $L_2(25)$&$2^3\cdot3\cdot5^2\cdot13$&$2^2$&$A_{12}$&$2^9\cdot3^5\cdot5^2\cdot7\cdot11$&$2$\\
  $M_{11}$&$2^4\cdot3^2\cdot5\cdot11$&$1$&$M_{24}$&$2^{10}\cdot3^3\cdot5\cdot7\cdot11\cdot23$&$1$\\
  $L_2(27)$&$2^2\cdot3^3\cdot7\cdot13$&$2\cdot3$&$G_2(4)$&$2^{12}\cdot3^3\cdot5^2\cdot7\cdot13$&$2$\\
  $A_8$&$2^{6}\cdot3^2\cdot5\cdot7$&$2$&$L_4(4)$&$2^{12}\cdot3^4\cdot5^2\cdot7\cdot17$&$2^2$\\
  $L_3(4)$&$2^{6}\cdot3^2\cdot5\cdot7$&$2^2\cdot3$&$L_3(16)$&$2^{12}\cdot3^2\cdot5^2\cdot7\cdot13\cdot17$&$2^3\cdot3$\\ $U_{4}(2)$&$2^2\cdot3^4\cdot5$&$2$&$A_{13}$&$2^{9}\cdot3^5\cdot5^2\cdot7\cdot11\cdot13$&$2$\\ $Sz(8)$&$2^6\cdot5\cdot7\cdot13$&$3$&$O_7(3)$&$2^9\cdot3^9\cdot5\cdot7\cdot13$&$2$\\
  $M_{12}$&$2^{6}\cdot3^3\cdot5\cdot11$&$2$&$S_6(3)$&$2^9\cdot3^9\cdot5\cdot7\cdot13$&$2$\\
  $U_3(4)$&$2^6\cdot3\cdot5^2\cdot13$&$2^2$&$U_6(2)$&$2^{15}\cdot3^6\cdot5\cdot7\cdot11$&$2\cdot3$\\
  $A_9$&$2^6\cdot3^4\cdot5\cdot7$&$2$&$S_8(2)$&$2^{16}\cdot3^5\cdot5^2\cdot7\cdot17$&$1$\\
  $L_2(64)$&$2^6\cdot3^2\cdot5\cdot7\cdot13$&$2\cdot3$&$Suz$&$2^{13}\cdot3^7\cdot5^2\cdot7\cdot11\cdot13$&$2$\\
  $M_{22}$&$2^7\cdot3^2\cdot5\cdot7\cdot11$&$2$&$O_8^{+}(3)$&$2^{12}\cdot3^{12}\cdot5^2\cdot7\cdot13$&$2^3\cdot3$\\
  $J_2$&$2^7\cdot3^3\cdot5^2\cdot7$&$2$&$Fi_{22}$&$2^{17}\cdot3^9\cdot5^2\cdot7\cdot11\cdot13$&$2$\\
  $S_4(4)$&$2^8\cdot3^2\cdot5^2\cdot17$&$2^2$&$Fi_{23}$&$2^{18}\cdot3^{13}\cdot5^2\cdot7\cdot11\cdot13\cdot17\cdot23$&$1$\\

  \hline
   \end{longtabu}

  From Table \ref{tb:Co2}-\ref{tb:Fi23},  we have

  (1) If  $23\nmid|K/H|$, then $23\nmid|\rm{Out(K/H)}|$.

 (2) If  $23\big||K/H|$, then $K/H$ is isomorphic to one of: $S$, $ L_2(23)$, $M_{23}$ and $M_{24}$. Moreover, $5\nmid|\rm{Out(K/H)}|$.

   For $K/H$ satisfying (1), with the same arguments as  in \textbf{Step 1}, we can get contradictions. For $K/H$ satisfying  (2), if $K/H\cong S$, then  $G\cong S$, we are done. For the remain cases, observing that  $|L_2(23)|=2^{3}\cdot3\cdot11\cdot23$, $|M_{23}|=2^7\cdot3^2\cdot5\cdot7\cdot11\cdot23$, $|M_{24}|=2^{10}\cdot3^3\cdot5\cdot7\cdot11\cdot23$, and by (2) , we deduce that $5\in\pi(H)$, $|H_5|\big|5^3$ and  $H_5\unlhd G$. It forces that $\pi(G)=\pi(C_G(H_5))$ since $|G/C_G(H_5)|\big||GL(3,5)|$ and $|GL(3,5)|=2^7\cdot3\cdot5^3\cdot31$, hence, $t(G)=1$, a contradiction.

\end{proof}

\begin{lemma} \label{le4} Let $G$ be a group and   $S=Fi_{22}$ or $Fi^{\prime}_{24}$.   If $|G|=|S|$ and the prime graph of $G$ is disconnected, then $G\cong S$.\end{lemma}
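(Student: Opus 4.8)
The plan is to run the same scheme as in Lemmas \ref{le1}--\ref{le3}. By Corollary \ref{ty12}, $G$ has a normal series $1\unlhd H\unlhd K\unlhd G$ with $H$ nilpotent, $K/H$ a non-abelian simple group, $G/K$ a $\pi_1$-group and $|G/K|\mid|\mathrm{Out}(K/H)|$; by Lemma \ref{lemT} the section $K/H$ lies in the list of \cite{ATLAS}. First I would tabulate, for each of $S=Fi_{22}$ (with $|S|=2^{17}\cdot3^9\cdot5^2\cdot7\cdot11\cdot13$) and $S=Fi^{\prime}_{24}$ (with $|S|=2^{21}\cdot3^{16}\cdot5^2\cdot7^3\cdot11\cdot13\cdot17\cdot23\cdot29$), all simple $K/H$ whose order divides $|S|$, exactly as in the previous lemmas. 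The goal is then to eliminate every candidate except $K/H\cong S$, from which $G\cong S$ follows by comparing orders.

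The elimination is driven by the largest prime: $r=13$ for $Fi_{22}$ and $r=29$ for $Fi^{\prime}_{24}$, with $|S|_r=r$ in both cases. If $r\nmid|K/H|$, then, since $r\nmid|\mathrm{Out}(K/H)|$, we get $r\in\pi(H)$, so $H_r\unlhd G$ has order $r$ and $|G/C_G(H_r)|\mid r-1$. Because $r-1$ ($=12$, resp. $28$) is divisible only by small primes to low powers, every prime of $\pi(S)$ survives in $C_G(H_r)$; as $H_r\le Z(C_G(H_r))$, this yields an element of order $rp$ for every $p\in\pi(S)$, so $\Gamma(G)$ is connected, a contradiction. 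Thus I may assume $r\mid|K/H|$, which sharply shortens the list.

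For $Fi^{\prime}_{24}$ the surviving candidates with $29\mid|K/H|$ are only $L_2(28)$, $L_2(29)$ and $Fi^{\prime}_{24}$ itself; the first two miss the prime $23$, so $23\in\pi(H)$, and the centralizer argument applied to $H_{23}$ (with $|G/C_G(H_{23})|\mid 22$) joins $23$ to every prime except possibly $11$. To attach $11$ I would invoke the coprime-action Lemma \ref{ty4} together with Corollary \ref{cor51}: an $11$-element acts on $C_G(H_{23})$, which still contains a full Sylow $7$-subgroup, and since $11\nmid|\mathrm{GL}(3,7)|$ this action is trivial, producing an element of order $7\cdot11$ and hence connectivity, a contradiction. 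For $Fi_{22}$ the candidates with $13\mid|K/H|$ but $11\nmid|K/H|$ are dispatched by the same centralizer argument applied to $H_{11}$ (with $|G/C_G(H_{11})|\mid 10$), which forces $\Gamma(G)$ connected outright.

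The hard part is the handful of candidates with $\pi(K/H)=\pi(S)$ and $K/H\not\cong S$: for $Fi_{22}$ these are $Suz$ and $A_{13}$, while for $Fi^{\prime}_{24}$ there are none, since no proper simple divisor of $|Fi^{\prime}_{24}|$ contains $29$ alongside the full prime set. Here no prime is forced into $H$ by order bookkeeping; one only knows that $H\neq1$ is a nilpotent $\{2,3\}$-group. For $Suz$ the index $|S|/|Suz|=2^4\cdot3^2$ gives $|H_3|=3^2$, so $|\mathrm{Aut}(H_3)|\mid|\mathrm{GL}(2,3)|$ is a $\{2,3\}$-number, and the centralizer argument applied to $H_3\unlhd G$ joins $3$ to all of $\{2,5,7,11,13\}$, forcing connectivity. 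The genuinely delicate case is $A_{13}$, where $|H_3|=3^4$ and $|\mathrm{GL}(4,3)|=2^9\cdot3^6\cdot5\cdot13$ is divisible by $5$ and $13$, so a single normal Sylow subgroup of $H$ no longer captures every prime. I expect to close it by combining two centralizer computations: $C_G(H_2)$, where $13\nmid|\mathrm{GL}(8,2)|$ joins $2$ to $13$, and $C_G(H_3)$, where $7,11\nmid|\mathrm{GL}(4,3)|$ join $3$ to $7$ and $11$ (with the edges $3$--$2$ and $3$--$5$ supplied by either computation). The two overlapping neighbourhoods then cover all of $\pi(S)$ and again force $t(G)=1$. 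Verifying that these computations jointly exhaust $\pi(S)$ is the main technical obstacle; once it is cleared, the only surviving section is $K/H\cong S$, giving $G\cong S$.
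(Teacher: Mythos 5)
Your proposal is correct, and its skeleton is exactly the paper's: Corollary \ref{ty12}, Lemma \ref{lemT}, the \cite{ATLAS} table of simple groups of order dividing $|S|$, then elimination of every candidate $K/H$ by forcing a prime into the nilpotent normal subgroup $H$ and bounding $|G/C_G(H_p)|$ by the relevant automorphism group order. Where you genuinely differ is in the delicate candidates. For the two groups with $\pi(K/H)=\pi(Fi_{22})$, namely $Suz$ and $A_{13}$, the paper runs one uniform argument: $H_3\unlhd G$ gives $\pi(G)\setminus\{13\}\subseteq\pi(C_G(H_3))$, and then a coprime action of a $13$-element on $C_G(H_3)$ (Lemmas \ref{ty4} and \ref{ty5}, using $|G|_7=7$) yields $7\cdot13\in\pi_e(G)$ and hence connectivity. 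You instead treat $Suz$ by the sharper observation that $|H_3|=3^2$ has a $\{2,3\}$-order automorphism group, so $C_G(H_3)$ alone joins $3$ to every other prime, and you treat $A_{13}$ by uniting the neighbourhoods of $H_2$ and $H_3$, via $11,13\nmid|\mathrm{GL}(8,2)|$ and $7,11\nmid|\mathrm{GL}(4,3)|$. This does work: $\pi(\mathrm{GL}(8,2))=\{2,3,5,7,17,31,127\}$, the $2$-, $3$- and $5$-parts of the relevant automorphism group orders are strictly smaller than the corresponding parts of $|G|$, and the edges $3$--$2$, $3$--$5$, $3$--$7$, $3$--$11$, $2$--$13$ already connect $\pi(G)$; so the step you flag as the ``main technical obstacle'' is in fact closed, and your route avoids the coprime-action machinery precisely where the paper needs it (at the cost of a second normal subgroup). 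Conversely, for $K/H\cong L_2(29)$ in the $Fi^{\prime}_{24}$ case you use $H_{23}$ plus a coprime action on a Sylow $7$-subgroup, where the paper simply notes $11\in\pi(H)$ and $|G/C_G(H_{11})|\mid 2\cdot5$, forcing $t(G)=1$ outright; both are valid, the paper's being shorter. Two blemishes, neither fatal: $L_2(28)$ is not a group ($28$ is not a prime power), so the $Fi^{\prime}_{24}$-candidates containing $29$ are just $L_2(29)$ and $Fi^{\prime}_{24}$ itself; and assertions like $|\mathrm{Aut}(H_3)|\mid|\mathrm{GL}(2,3)|$ are literally true only for elementary abelian $H_3$ --- what is true, suffices, and is all the paper itself uses, is that every prime dividing $|\mathrm{Aut}(H_p)|$ lies in $\{p\}\cup\pi(\mathrm{GL}(n,p))$ for $|H_p|=p^n$.
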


\begin{proof} The proof is divided into following steps.

\textbf{Step 1} If $S= Fi_{22}$, then  $G\cong Fi_{22}$.

In this case, $|G|=|Fi_{22}|=2^{17}\cdot 3^{9}\cdot5^2\cdot7\cdot11\cdot13$. By the same reason, checking the list in \cite{ATLAS}, we get that $K/H$ is isomorphic to one of the groups in Table \ref{tb:Fi22}.

\begin{table}[h]
\setlength{\abovecaptionskip}{10pt}
\setlength{\belowcaptionskip}{0pt}
\centering
\caption{The non-abelian simple groups with orders dividing $|Fi_{22}|$ }~\label{tb:Fi22}
\setlength{\tabcolsep}{4mm}{
\begin{tabular}{ccc|ccc}
 \hline
  $K/H$      & $|K/H|$
  & $|\mathrm{Out}(K/H)|$& $K/H$      & $|K/H|$
  & $|\mathrm{Out}(K/H)|$     \\
  \hline
  $A_5$&$2^{2}\cdot3\cdot5$&$2$&$A_{10}$&$2^7\cdot3^4\cdot5^2\cdot7$&$2$\\
  $L_2(7)$ & $2^3\cdot3\cdot7$&$2$&$U_4(3)$&$2^7\cdot3^6\cdot5\cdot7$& $2^3$\\
  $A_6$&$2^{3}\cdot3^2\cdot5$&$4$&$G_2(3)$&$2^6\cdot3^6\cdot7\cdot13$&$2$\\
  $L_2(8)$&$2^{3}\cdot3^2\cdot7$&$3$&$L_4(3)$&$2^7\cdot3^6\cdot5\cdot13$&$2^2$\\
  $L_2(11)$&$2^{2}\cdot3\cdot5\cdot11$&$2$&$^2F_4(2)^{\prime}$&$2^{11}\cdot3^3\cdot5^2\cdot13$&$2$\\ $L_2{(13)}$&$2^2\cdot3\cdot7\cdot13$&$2$&$U_5(2)$&$2^{10}\cdot3^5\cdot5\cdot11$&$2$\\
  $A_7$&$2^{3}\cdot3^2\cdot5\cdot7$&$2$&$A_{11}$&$2^7\cdot3^4\cdot5^2\cdot7\cdot11$&$2$\\
  $L_3(9)$&$2^7\cdot3^6\cdot5\cdot7\cdot13$&$2^2$&$O_8^{+}(2)$&$2^{12}\cdot3^5\cdot5^2\cdot7$&$2\cdot3$\\
  $L_3(3)$&$2^4\cdot3^3\cdot13$&$2$&$S_6(2)$&$2^9\cdot3^4\cdot5\cdot7$&$1$\\
  $U_3(3)$&$2^{5}\cdot3^3\cdot7$&$2$&$A_{12}$&$2^9\cdot3^5\cdot5^2\cdot7\cdot11$&$2$\\
  $L_2(25)$&$2^3\cdot3\cdot5^2\cdot13$&$2^2$&$A_8$&$2^{6}\cdot3^2\cdot5\cdot7$&$2$\\
  $M_{11}$&$2^4\cdot3^2\cdot5\cdot11$&$1$&$G_2(4)$&$2^{12}\cdot3^3\cdot5^2\cdot7\cdot13$&$2$\\
  $L_2(27)$&$2^2\cdot3^3\cdot7\cdot13$&$2\cdot3$&$A_9$&$2^6\cdot3^4\cdot5\cdot7$&$2$\\
  $L_3(4)$&$2^{6}\cdot3^2\cdot5\cdot7$&$2^2\cdot3$&$A_{13}$&$2^{9}\cdot3^5\cdot5^2\cdot7\cdot11\cdot13$&$2$\\ $U_{4}(2)$&$2^2\cdot3^4\cdot5$&$2$&$O_7(3)$&$2^9\cdot3^9\cdot5\cdot7\cdot13$&$2$\\ $Sz(8)$&$2^6\cdot5\cdot7\cdot13$&$3$&$S_6(3)$&$2^9\cdot3^9\cdot5\cdot7\cdot13$&$2$\\
  $M_{12}$&$2^{6}\cdot3^3\cdot5\cdot11$&$2$&$U_6(2)$&$2^{15}\cdot3^6\cdot5\cdot7\cdot11$&$2\cdot3$\\
  $U_3(4)$&$2^6\cdot3\cdot5^2\cdot13$&$2^2$&$Suz$&$2^{13}\cdot3^7\cdot5^2\cdot7\cdot11\cdot13$&$2$\\
  $L_2(64)$&$2^6\cdot3^2\cdot5\cdot7\cdot13$&$2\cdot3$&$J_2$&$2^7\cdot3^3\cdot5^2\cdot7$&$2$\\
  $M_{22}$&$2^7\cdot3^2\cdot5\cdot7\cdot11$&$2$&$Fi_{22}$&$2^{17}\cdot3^9\cdot5^2\cdot7\cdot11\cdot13$&$2$\\
  \hline
  \end{tabular}}\end{table}

Suppose that $K/H$ is one of the groups in Table \ref{tb:Fi22} except $L_2(25)$, $U_{3}(4)$, $J_{2}$, $A_{10}$, $^2F_4(2)^{\prime}$, $O_8^{+}(2)$,  $G_2(4)$, $A_{11}$, $A_{12}$, $A_{13}$, $Suz$ and $Fi_{22}$, then $5\in\pi(H)$ and $|H_{5}|=5$ or $5^2$. Clearly $H_{5}\unlhd G$.  Further, we deduce  contradictions to $t(G)=1$ since $|G/C_G(H_{5})|\big||GL(2,5)|$ and $|GL(2,5)|=2^5\cdot3\cdot5$.

Suppose that $K/H$ is isomorphic to one of $L_2(25)$, $U_{3}(4)$, $J_{2}$, $A_{10}$, $^2F_4(2)^{\prime}$,  $O_8^{+}(2)$ and  $G_2(4)$, then $11\in\pi(H)$ and $|H_{11}|=11$, also $H_{11}\unlhd G$. Thereby, we deduce  contradictions  to $t(G)=1$ since $|G/C_G(H_{11})|\big|2\cdot5$.

Suppose that $K/H$ is isomorphic to  $A_{11}$ or  $A_{12}$, then $13\in\pi(H)$ and $|H_{13}|=13$, also $H_{13}\unlhd G$. Similarly, we get contradictions to $t(G)=1$ since $|G/C_G(H_{13})|\big|2^2\cdot3$.

Suppose that $K/H$ is isomorphic to  $A_{13}$ or  $Suz$, then $3\in\pi(H)$ and $|H_{3}|=3^2$ or $3^4$, also $H_{3}\unlhd G$.  Since $|G/C_G(H_{3})|\big||GL(4,3)|$ and $|GL(4,3)|=2^9\cdot3^6\cdot5\cdot13$, it follows that $\pi(G)\smallsetminus\{13\}\subseteq\pi(C_G(H_3))$. In view of the fact that  $\Gamma(G)$ is disconnected, we have  $13\not\in\pi(C_G(H_3))$. Now looking at the action of a $13-$element $g$ on $C_G(H_3)$ and applying  Lemma 2.4, we get that $C_G(H_3)$ has a $\left\langle g\right\rangle-$invariant $\rm{Sylow}\ 7-$subgroup. Therefore,  $13\cdot7\in\pi_e(G)$, which implies that $\Gamma(G)$ is connected, a contradiction.

Now we have $K/H\cong Fi_{22}$, then $G\cong S\cong Fi_{22}$ as desired.

\end{proof}

\textbf{Step 2} If $S= Fi^{\prime}_{24}$ then $G\cong Fi^{\prime}_{24}$.

 In this case, $|G|=|Fi^{\prime}_{24}|=2^{21}\cdot 3^{16}\cdot5^2\cdot7^3\cdot11\cdot13\cdot17\cdot23\cdot29$.
  By \cite{ATLAS} and  Lemma \ref{lemT}, we get that $K/H$ may be one of the groups in Table \ref{tb:Fi24}.

  \setlength{\LTleft}{0pt} \setlength{\LTright}{0pt} % 表格与页面左右边缘之间的矩离均为０
\setlength\LTleft{0in}
\setlength\LTright{+1in plus 2 fill}
\setlength{\tabcolsep}{3pt}
\begin{longtabu}{p{1.5cm}p{3.3cm}c|p{2.0cm}p{4.5cm}c}
\caption{the non-abelian simple groups $K/H$ of orders dividing $|Fi^{\prime}_{24}|$ }~\label{tb:Fi24}\\

\hline

  $K/H$       & $|K/H|$
  & {\small $|\mathrm{Out}(K/H)|$}&   $K/H$       & $|K/H|$
  & {\small $|\mathrm{Out}(K/H)|$}    \\
 \hline
 \endfirsthead
 \multicolumn{4}{l}{Table \ref{tb:Fi24}: (continued) ~} \\
 \hline
  $K/H$       & $|K/H|$
 & {\small $|\mathrm{Out}(K/H)|$}&   $K/H$       & $|K/H|$
 & {\small $|\mathrm{Out}(K/H)|$}     \\
 \hline
 \endhead

 \hline
  $A_5$&$2^{2}\cdot3\cdot5$&$2$&$A_{10}$&$2^7\cdot3^4\cdot5^2\cdot7$&$2$\\
  $L_2(7)$ & $2^3\cdot3\cdot7$&$2$&$U_4(3)$&$2^7\cdot3^6\cdot5\cdot7$& $2^3$\\
  $A_6$&$2^{3}\cdot3^2\cdot5$&$4$&$G_2(3)$&$2^6\cdot3^6\cdot7\cdot13$&$2$\\
  $L_2(8)$&$2^{3}\cdot3^2\cdot7$&$3$&$L_4(3)$&$2^7\cdot3^6\cdot5\cdot13$&$2^2$\\
  $L_2(11)$&$2^{2}\cdot3\cdot5\cdot11$&$2$&$M_{23}$&$2^7\cdot3^2\cdot5\cdot7\cdot11\cdot23$&$1$\\ $L_2{13}$&$2^2\cdot3\cdot7\cdot13$&$2$&$U_5(2)$&$2^{10}\cdot3^5\cdot5\cdot11$ &$2$\\
  $L_2(17)$&$2^4\cdot3^2\cdot17$&$2$&$^2F_4(2)^{\prime}$&$2^{11}\cdot3^3\cdot5^2\cdot13$&$2$\\
  $A_7$&$2^{3}\cdot3^2\cdot5\cdot7$&$2$&$A_{11}$&$2^7\cdot3^4\cdot5^2\cdot7\cdot11$&$2$\\
  $L_2(16)$&$2^4\cdot3\cdot5\cdot17$&$2^2$&$L_3(9)$&$2^7\cdot3^6\cdot5\cdot7\cdot13$&$2^2$\\
  $L_3(3)$&$2^4\cdot3^3\cdot13$&$2$&$O_8^{+}(2)$&$2^{12}\cdot3^5\cdot5^2\cdot7$&$2\cdot3$\\
  $U_3(3)$&$2^{5}\cdot3^3\cdot7$&$2$&$O_8^{-}(2)$&$2^{12}\cdot3^4\cdot5\cdot7\cdot17$&$2$\\
  $L_2(23)$&$2^{3}\cdot3\cdot11\cdot23$&$2$&$^3D_4{2}$&$2^{12}\cdot3^4\cdot7^2\cdot13$&$3$\\
  $L_2(25)$&$2^3\cdot3\cdot5^2\cdot13$&$2^2$&$A_{12}$&$2^9\cdot3^5\cdot5^2\cdot7\cdot11$&$2$\\
  $M_{11}$&$2^4\cdot3^2\cdot5\cdot11$&$1$&$M_{24}$&$2^{10}\cdot3^3\cdot5\cdot7\cdot11\cdot23$&$1$\\
  $L_2(27)$&$2^2\cdot3^3\cdot7\cdot13$&$2\cdot3$&$G_2(4)$&$2^{12}\cdot3^3\cdot5^2\cdot7\cdot13$&$2$\\
  $L_2(29)$&$2^2\cdot3\cdot5\cdot7\cdot29$&$2$&$L_4(4)$&$2^{12}\cdot3^4\cdot5^2\cdot7\cdot17$&$2^2$\\
  $A_8$&$2^{6}\cdot3^2\cdot5\cdot7$&$2$&$S_4(8)$&$2^{12}\cdot3^4\cdot5\cdot7^2\cdot13$&$2\cdot3$\\
  $L_3(4)$&$2^{6}\cdot3^2\cdot5\cdot7$&$2^2\cdot3$&$L_3(16)$&$2^{12}\cdot3^2\cdot5^2\cdot7\cdot13\cdot17$&$2^3\cdot3$\\ $U_{4}(2)$&$2^2\cdot3^4\cdot5$&$2$&$A_{13}$&$2^{9}\cdot3^5\cdot5^2\cdot7\cdot11\cdot13$&$2$\\ $Sz(8)$&$2^6\cdot5\cdot7\cdot13$&$3$&$O_7(3)$&$2^9\cdot3^9\cdot5\cdot7\cdot13$&$2$\\
  $L_{2}(49)$&$2^4\cdot3\cdot5^2\cdot7^2$&$2^2$&$S_6(3)$&$2^9\cdot3^9\cdot5\cdot7\cdot13$&$2$\\
  $U_3(4)$&$2^6\cdot3\cdot5^2\cdot13$&$2^2$&$U_6(2)$&$2^{15}\cdot3^6\cdot5\cdot7\cdot11$&$2\cdot3$\\
  $M_{12}$&$2^{6}\cdot3^3\cdot5\cdot11$&$2$&$A_{14}$&$2^{10}\cdot3^5\cdot5^2\cdot7^2\cdot11\cdot13$&$2$\\
  $A_9$&$2^6\cdot3^4\cdot5\cdot7$&$2$&$S_8(2)$&$2^{16}\cdot3^5\cdot5^2\cdot7\cdot17$&$1$\\
  $L_2(64)$&$2^6\cdot3^2\cdot5\cdot7\cdot13$&$2\cdot3$&$Suz$&$2^{13}\cdot3^7\cdot5^2\cdot7\cdot11\cdot13$&$2$\\
  $M_{22}$&$2^7\cdot3^2\cdot5\cdot7\cdot11$&$2$&$O_8^{+}(3)$&$2^{12}\cdot3^{12}\cdot5^2\cdot7\cdot13$&$2^3\cdot3$\\
  $J_2$&$2^7\cdot3^3\cdot5^2\cdot7$&$2$&$Fi_{22}$&$2^{17}\cdot3^9\cdot5^2\cdot7\cdot11\cdot13$&$2$\\
  $S_4(4)$&$2^8\cdot3^2\cdot5^2\cdot17$&$2^2$&$Fi_{23}$&$2^{18}\cdot3^{13}\cdot5^2\cdot7\cdot11\cdot13\cdot17\cdot23$&$1$\\
  $S_6(2)$&$2^9\cdot3^4\cdot5\cdot7$&$1$&$Fi^{\prime}_{24}$&$2^{21}\cdot3^{16}\cdot5^2\cdot7^3\cdot11\cdot13\cdot17\cdot23\cdot29$&$2$\\
  \hline
  \end{longtabu}

Suppose that $K/H$ is isomorphic to one of the groups in Table \ref{tb:Fi24} except $Fi^{\prime}_{24}$ or $L_2(29)$, then   $29\in\pi(H)$. Of course,  $|H_{29}|=29$ and $H_{29}\unlhd G$. It follows from $|G/C_G(H_{29})|\big|2^2\cdot7$ that $t(G)=1$, a contradiction.

 Suppose  that    $K/H\cong Fi^{\prime}_{24}$, then $G\cong S\cong Fi^{\prime}_{24}$, we are done. Now suppose that $K/H\cong L_2(29)$. Observe that  $|L_2(29)|=2^2\cdot3\cdot5\cdot7\cdot29$, and $|\rm{Out}(L_2(29))|=2$, we deduce that $11\in\pi(H)$,  $|H_{11}|=11$ and $H_{11}\unlhd G$. Then  by  $|G/C_G(H_{11})|\big|2\cdot5$, we get $t(G)=1$, a contradiction.

\begin{lemma}\label{le5} Let $G$ be a group and  $S$ a Monster group or Baby Monster group.     If $|G|=|S|$ and the prime graph of $G$ is disconnected, then $G\cong S$.\end{lemma}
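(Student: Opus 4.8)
The plan is to follow verbatim the template of Lemmas \ref{le1}--\ref{le4}. By Corollary \ref{ty12}, $G$ has a normal series $1 \unlhd H \unlhd K \unlhd G$ with $H$ nilpotent, $K/H$ a non-abelian simple group, $G/K$ a $\pi_1$-group and $|G/K| \mid |\mathrm{Out}(K/H)|$; since $|K/H| \mid |G| = |S|$, Lemma \ref{lemT} supplies a finite list of candidates for the unique simple section $K/H$ (for $S = M$ this list is the simple groups of \cite{ATLAS} of order dividing $|M|$ together with the extra groups $A_n$ $(26 \le n \le 32)$, $L_2(2^{10})$ and $L_2(13^2)$ provided by part (I)). The objective is to eliminate every candidate except $K/H \cong S$, since then $|G| = |S| = |K/H|$ forces $H = 1$ and $G/K = 1$, whence $G \cong S$.

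I would treat $S = M$ first, organizing the elimination around the largest prime $71$, which divides $|M|$ exactly once. For any candidate with $71 \nmid |K/H|$ (here $71 \nmid |\mathrm{Out}(K/H)|$ always), the prime $71$ cannot lie in $K/H$ or in $G/K$, so $71 \in \pi(H)$; thus $|H_{71}| = 71$ and, $H$ being nilpotent and normal, $H_{71} \unlhd G$. Then $|G/C_G(H_{71})| \mid |\mathrm{Aut}(C_{71})| = 70 = 2\cdot 5\cdot 7$, so every prime of $\pi(G)$ outside $\{2,5,7\}$ lies in $\pi(C_G(H_{71}))$ and is therefore joined to $71$ in $\Gamma(G)$. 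To reach the three remaining primes $2,5,7$ I would run the coprime-action routine exactly as in Step 1 of Lemma \ref{le1}: letting a suitable $\{2,5,7\}$-element act on the normal subgroup $C_G(H_{71})$, Lemma \ref{ty4} yields an invariant Sylow subgroup on which the action must be trivial by Corollary \ref{cor51}, producing the missing adjacencies and forcing $\Gamma(G)$ connected, contrary to hypothesis.

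The residue consists of the few candidates divisible by $71$, the principal one being $L_2(71)$ of order $2^3\cdot 3^2\cdot 5\cdot 7\cdot 71$, together with $M$ itself; these I would separate by repeating the argument with the next isolated large primes $59$, $47$, $41$. For instance, since $59 \nmid |L_2(71)|$ one gets $59 \in \pi(H)$, $|H_{59}| = 59$, $|G/C_G(H_{59})| \mid 58 = 2\cdot 29$, and the same centralizer-plus-coprime-action step again forces connectedness, so only $K/H \cong M$ survives and $G \cong M$. The Baby Monster $S = B$ is then handled identically with the largest prime $47$ in the lead role: for candidates with $47 \nmid |K/H|$ one has $|H_{47}| = 47$ and $|G/C_G(H_{47})| \mid 46 = 2\cdot 23$, while the lone proper $47$-divisible candidate $L_2(47)$, of order $2^4\cdot 3\cdot 23\cdot 47$, is eliminated using, say, the prime $31$.

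I expect the main obstacle to be bookkeeping rather than conceptual. Because $|M|$ carries fifteen prime divisors, the table of simple groups $K/H$ with $|K/H| \mid |M|$ is enormous, and for every entry one must check both that the chosen large prime is absent from $|K/H|$ and from $|\mathrm{Out}(K/H)|$ and that the forced centralizer captures enough primes for the coprime-action step to close; the genuinely delicate point is choosing, for each target residual prime $2,5,7$, an invariant Sylow subgroup whose relevant $\mathrm{GL}$-order is not divisible by the acting prime, so that Lemma \ref{ty5} and Corollary \ref{cor51} apply. Assembling this exhaustive table, and confirming through the prime-power bounds in the proof of Lemma \ref{lemT} that no unlisted simple group of order dividing $|M|$ intrudes, is where the real labor lies.
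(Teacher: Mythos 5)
Your overall strategy is exactly the paper's: invoke Corollary \ref{ty12} and Lemma \ref{lemT} to list the candidates for $K/H$, use the largest prime ($71$ for $M$, $47$ for $B$, each dividing the order to the first power only) to force $71\in\pi(H)$ (resp.\ $47\in\pi(H)$) for every candidate not divisible by it, bound $|G/C_G(H_{71})|$ by $|\mathrm{Aut}(H_{71})|$, close the graph with the coprime-action routine, and then kill the lone surviving proper candidates $L_2(71)$ and $L_2(47)$ with a second prime. Your auxiliary primes ($59$ for $L_2(71)$, $31$ for $L_2(47)$) differ from the paper's ($19$ and $13$ respectively) but work equally well, since $|G/C_G(H_{59})|\mid 58$ and $|G/C_G(H_{31})|\mid 30$ leave only small residual sets of primes to reconnect.

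There is, however, one step that fails as you state it: you propose to reach the prime $2$ by ``letting a suitable $\{2,5,7\}$-element act on $C_G(H_{71})$'' and concluding triviality of the action from Corollary \ref{cor51}. For $p=2$ this can never succeed: every odd prime $q$ has $2\mid q-1$, so the hypothesis $2^r\nmid\prod_{i=1}^m(q^i-1)$ of Corollary \ref{cor51} is never satisfied by an involution acting on any Sylow $q$-subgroup, and one cannot even guarantee a $2$-element of order $\geq 4$ (e.g.\ if $K/H\cong A_5$ the simple section contributes none). The repair is already built into the paper's framework and costs nothing: property (*) of Corollary \ref{ty12} says $H$ is a $\pi_1$-group, so $71\in\pi(H)$ forces $71\in\pi_1$, which by the paper's convention is the component containing $2$; hence $2$ and $71$ lie in the same component automatically, and the coprime-action routine is only needed for the odd primes $5$ and $7$ (for which $q=59$, with $|\mathrm{GL}(1,59)|=58=2\cdot 29$, does the job). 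Note that the paper itself is silent on the prime $2$ --- in the $B$ case it runs the coprime action only for $23$ --- so it is implicitly relying on this same $\pi_1$ observation; your write-up should make it explicit rather than route $2$ through a lemma whose hypothesis it cannot meet.
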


\begin{proof} We have the following two steps.

\textbf{Step 1} If $S= M$, then $G\cong M$.

 Here, $|G|=|M|=2^{46}\cdot3^{20}\cdot5^9\cdot7^6\cdot11^2\cdot13^3\cdot17\cdot19\cdot23\cdot29\cdot31\cdot41\cdot47\cdot59\cdot71$.
 By Lemma \ref{lemT}, $K/H$ may be a simple group in the list of \cite{ATLAS} and in Lemma \ref{lemT}, we list all of them  in Table \ref{tb:M}.

 \setlength{\LTleft}{0pt} \setlength{\LTright}{0pt} %表格与页面左右边缘之间的矩离均为０
\setlength\LTleft{0in}
\setlength\LTright{+1in plus 2 fill}
\setlength{\tabcolsep}{3pt}
{\tiny
\begin{longtabu}{p{1.5cm}p{4.5cm}c|p{1.2cm}p{4.8cm}c}
\caption{the non-abelian simple groups $K/H$ of orders dividing $|M|$ }~\label{tb:M}\\

\hline

  $K/H$       & $|K/H|$
  &  $|\mathrm{Out}(K/H)|$&   $K/H$       & $|K/H|$
  &  $|\mathrm{Out}(K/H)|$    \\
 \hline
 \endfirsthead
 \multicolumn{4}{l}{Table \ref{tb:M}: (continued) ~} \\
 \hline
  $K/H$       & $|K/H|$
 &  $|\mathrm{Out}(K/H)|$&   $K/H$       & $|K/H|$
 &  $|\mathrm{Out}(K/H)|$     \\
 \hline
 \endhead

\hline
 $A_5$&$2^{2}\cdot3\cdot5$&$2$&$Sz(32)$&$2^{10}\cdot5^2\cdot31\cdot41$&$5$\\
  $L_2(7)$&$2^3\cdot3\cdot7$&$2$&$HS$&$2^{9}\cdot3^2\cdot5^3\cdot7\cdot11$&$2$\\
  $A_6$&$2^{3}\cdot3^2\cdot5$&$4$&$J_3$&$2^{7}\cdot3^5\cdot5\cdot17\cdot19$&$2$\\
  $L_2(8)$&$2^{3}\cdot3^2\cdot7$&$3$&$S_4(7)$&$2^{8}\cdot3^2\cdot5^2\cdot7^4$&$2$\\
  $L_2(11)$&$2^{2}\cdot3\cdot5\cdot11$&$2$&$O_8^{+}(2)$&$2^{12}\cdot3^5\cdot5^2\cdot7$&$2\cdot3$\\ $L_2{(13)}$&$2^2\cdot3\cdot7\cdot13$&$2$&$O_8^{-}(2)$&$2^{12}\cdot3^4\cdot5\cdot7\cdot17$&$2$\\
  $L_2(17)$&$2^4\cdot3^2\cdot17$&$2$&$^3D_4{(2)}$&$2^{12}\cdot3^4\cdot7^2\cdot13$&$3$\\
  $A_7$&$2^{3}\cdot3^2\cdot5\cdot7$&$2$&$A_{12}$&$2^9\cdot3^5\cdot5^2\cdot7\cdot11$&$2$\\
  $L_2(19)$&$2^2\cdot3^2\cdot5\cdot19$&$2$&$M_{24}$&$2^{10}\cdot3^3\cdot5\cdot7\cdot11\cdot23$&$1$\\
  $L_2(16)$&$2^4\cdot3\cdot5\cdot17$&$2^2$&$G_2(4)$&$2^{12}\cdot3^3\cdot5^2\cdot7\cdot13$&$2$\\
  $L_3(3)$&$2^4\cdot3^3\cdot13$&$2$&$M^cL$&$2^{7}\cdot3^6\cdot5^3\cdot7\cdot11$&$2$\\
  $U_3(3)$&$2^{5}\cdot3^3\cdot7$&$2$&$L_3(9)$&$2^7\cdot3^6\cdot5\cdot7\cdot13$&$2^2$\\
  $L_2(23)$&$2^{3}\cdot3\cdot11\cdot23$&$2$&$L_4(4)$&$2^{12}\cdot3^4\cdot5^2\cdot7\cdot17$&$2^2$\\
  $L_2(25)$&$2^3\cdot3\cdot5^2\cdot13$&$2^2$&$U_4(4)$&$2^{12}\cdot3^2\cdot5^3\cdot13\cdot17$&$2^2$\\
  $M_{11}$&$2^4\cdot3^2\cdot5\cdot11$&$1$&$S_4(8)$&$2^{12}\cdot3^4\cdot5\cdot7^2\cdot13$&$2\cdot3$\\
  $L_2(27)$&$2^2\cdot3^3\cdot7\cdot13$&$2\cdot3$&$L_3(16)$&$2^{12}\cdot3^2\cdot5^2\cdot7\cdot13\cdot17$&$2^3\cdot3$\\
  $L_2(29)$&$2^2\cdot3\cdot5\cdot7\cdot29$&$2$&$S_4(9)$&$2^{8}\cdot3^8\cdot5^2\cdot41$&$2^2$\\
  $L_2(31)$&$2^5\cdot3\cdot5\cdot31$&$2$&$A_{13}$&$2^{9}\cdot3^5\cdot5^2\cdot7\cdot11\cdot13$&$2$\\
  $A_8$&$2^{6}\cdot3^2\cdot5\cdot7$&$2$&$He$&$2^{10}\cdot3^3\cdot5^2\cdot7^3\cdot17$&$2$\\
  $L_3(4)$&$2^{6}\cdot3^2\cdot5\cdot7$&$2^2\cdot3$&$S_6(3)$&$2^{9}\cdot3^9\cdot5\cdot7\cdot7\cdot13$&$2$\\ $U_{4}(2)$&$2^2\cdot3^4\cdot5$&$2$&$O_7(3)$&$2^9\cdot3^9\cdot5\cdot7\cdot13$&$2$\\ $Sz(8)$&$2^6\cdot5\cdot7\cdot13$&$3$&$G_2(5)$&$2^{6}\cdot3^3\cdot5^6\cdot7\cdot31$&$1$\\
  $L_2(32)$&$2^5\cdot3\cdot11\cdot31$&$5$&$L_4(5)$&$2^{7}\cdot3^2\cdot5^6\cdot13\cdot31$&$8$\\
  $L_2(41)$&$2^3\cdot3\cdot5\cdot7\cdot41$&$2$&$U_6(2)$&$2^{15}\cdot3^6\cdot5\cdot7\cdot11$&$2\cdot3$\\
  $L_2(47)$&$2^4\cdot3\cdot23\cdot47$&$2$&$U_4(5)$&$2^{5}\cdot3^4\cdot5^4\cdot7\cdot13$&$2^2$\\
  $L_{2}(49)$&$2^4\cdot3\cdot5^2\cdot7^2$&$2^2$&$L_6(2)$&$2^{15}\cdot3^4\cdot5\cdot7^2\cdot31$&$2$\\
  $U_3(4)$&$2^6\cdot3\cdot5^2\cdot13$&$2^2$&$A_{14}$&$2^{10}\cdot3^5\cdot5^2\cdot7^2\cdot11\cdot13$&$2$\\
  $M_{12}$&$2^{6}\cdot3^3\cdot5\cdot11$&$2$&$S_{8}(2)$&$2^{16}\cdot3^5\cdot5^2\cdot7\cdot17$&$1$\\
  $L_2(59)$&$2^2\cdot3\cdot5\cdot29\cdot59$&$2$&$L_{3}(25)$&$2^{7}\cdot3^2\cdot5^6\cdot7\cdot13\cdot31$&$12$\\
  $U_3(5)$&$2^4\cdot3^2\cdot5^3\cdot7$&$6$&$Ru$&$2^{14}\cdot3^3\cdot5^3\cdot7\cdot13\cdot29$&$1$\\
  $J_1$&$2^3\cdot3\cdot5\cdot7\cdot11\cdot19$&$1$&$L_{5}(3)$&$2^{9}\cdot3^{10}\cdot5\cdot11^2\cdot13$&$2$\\
  $L_2(71)$&$2^3\cdot3^2\cdot5\cdot7\cdot71$&$2$&$Suz$&$2^{13}\cdot3^7\cdot5^2\cdot7\cdot11\cdot13$&$2$\\
  $A_9$&$2^6\cdot3^4\cdot5\cdot7$&$2$&$O^{\prime}N$&$2^{9}\cdot3^4\cdot5\cdot7^3\cdot11\cdot19\cdot31$&$2$\\
  $L_2(64)$&$2^6\cdot3^2\cdot5\cdot7\cdot13$&$2\cdot3$&$Co_3$&$2^{10}\cdot3^7\cdot5^3\cdot7\cdot11\cdot23$&$1$\\
  $L_2(81)$&$2^4\cdot3^4\cdot5\cdot41$&$2^3$&$A_{15}$&$2^{10}\cdot3^6\cdot5^2\cdot7^6\cdot19\cdot13$&$4$\\
  $L_3(5)$&$2^5\cdot3\cdot5^3\cdot31$&$2$&$L_4(7)$&$2^{9}\cdot3^4\cdot5^2\cdot7\cdot11\cdot13$&$2$\\
  $M_{22}$&$2^7\cdot3^2\cdot5\cdot7\cdot11$&$2$&$S_6(4)$&$2^{18}\cdot3^4\cdot5^3\cdot7\cdot13\cdot17$&$2$\\
  $J_2$&$2^7\cdot3^3\cdot5^2\cdot7$&$2$&$O_8^{+}(3)$&$2^{12}\cdot3^{12}\cdot5^2\cdot7\cdot13$&$2^3\cdot3$\\
  $L_2(125)$&$2^2\cdot3^2\cdot5^3\cdot7\cdot31$&$2\cdot3$&$O_8^{-}(3)$&$2^{10}\cdot3^{12}\cdot5\cdot7\cdot13\cdot41$&$2^2$\\
  $S_4(4)$&$2^8\cdot3^2\cdot5^2\cdot17$&$2^2$&$A_{16}$&$2^{14}\cdot3^{6}\cdot5^3\cdot7^2\cdot11\cdot13$&$2$\\
  $S_6(2)$&$2^9\cdot3^4\cdot5\cdot7$&$1$&$O_{10}^{+}(2)$&$2^{20}\cdot3^{5}\cdot5^2\cdot7\cdot17\cdot31$&$2$\\
  $A_{10}$&$2^7\cdot3^4\cdot5^2\cdot7$&$2$&$O_{10}^{-}(2)$&$2^{20}\cdot3^{6}\cdot5^2\cdot7\cdot11\cdot17$&$2$\\
  $L_3(7)$&$2^5\cdot3^2\cdot7^3\cdot19$&$2\cdot3$&$U_4(8)$&$2^{18}\cdot3^7\cdot5\cdot7^2\cdot13\cdot19$&$6$\\
  $U_4(3)$&$2^7\cdot3^6\cdot5\cdot7$&$2^3$&$Co_2$&$2^{18}\cdot3^6\cdot5^3\cdot7\cdot11\cdot23$&$1$\\
  $G_2(3)$&$2^6\cdot3^6\cdot7\cdot13$&$2$&$L_4(9)$&$2^{20}\cdot3^{2}\cdot5^4\cdot13\cdot17\cdot41$&$20$\\
  $S_4(5)$&$2^6\cdot3^2\cdot5^4\cdot13$&$2$&$U_5(4)$&$2^{18}\cdot3^7\cdot5\cdot7^2\cdot13\cdot19$&$6$\\
  $U_3(8)$&$2^9\cdot3^4\cdot7\cdot19$&$2$&$Fi_{22}$&$2^{17}\cdot3^9\cdot5^2\cdot7\cdot11\cdot13$&$2$\\
  $L_4(3)$&$2^7\cdot3^6\cdot5\cdot13$&$2^2$&$A_{17}$&$2^{14}\cdot3^{6}\cdot5^3\cdot7^2\cdot11\cdot13\cdot17$&$2$\\
  $L_5(2)$&$2^{10}\cdot3^2\cdot5\cdot7\cdot31$&$2$&$S_6(5)$&$2^{9}\cdot3^{4}\cdot5^9\cdot7\cdot13\cdot31$&$2$\\
  $M_{23}$&$2^7\cdot3^2\cdot5\cdot7\cdot11\cdot23$&$1$&$O_7(5)$&$2^{9}\cdot3^{4}\cdot5^9\cdot7\cdot13\cdot31$&$2$\\
  $U_5(2)$&$2^{10}\cdot3^5\cdot5\cdot11$&$2$&$L_5(4)$&$2^{20}\cdot3^5\cdot5^2\cdot7\cdot11\cdot17\cdot31$&$2^2$\\
  $^2F_4(2)^{\prime}$&$2^{11}\cdot3^3\cdot5^2\cdot13$&$2$&$HN$&$2^{14}\cdot3^6\cdot5^6\cdot7\cdot11\cdot19$&$2$\\
  $A_{11}$&$2^7\cdot3^4\cdot5^2\cdot7\cdot11$&$2$&$A_{18}$&$2^{15}\cdot3^8\cdot5^3\cdot7^2\cdot11\cdot13\cdot17$&$2$\\
  $F_4(2)$&$2^{24}\cdot3^6\cdot5^2\cdot7^2\cdot13\cdot17$&$2$&$L_6(4)$&$2^{30}\cdot3^{6}\cdot5^3\cdot7^2\cdot11\cdot13\cdot17\cdot31$&$12$\\
  $L_6(3)$&$2^{11}\cdot3^{15}\cdot5\cdot7\cdot11^2\cdot13^2$&$2^2$&$A_{22}$&$2^{18}\cdot3^9\cdot5^4\cdot7^3\cdot11^2\cdot13\cdot17\cdot19$&$2$\\
  $S_{10}(2)$&$2^{25}\cdot3^6\cdot5^2\cdot7\cdot11\cdot17\cdot31$&$1$&$U_{6}(4)$&$2^{30}\cdot3^4\cdot5^6\cdot7\cdot13^2\cdot\cdot17\cdot41$&$4$\\
  $A_{19}$&$2^{15}\cdot3^8\cdot5^3\cdot7^2\cdot11\cdot13\cdot17\cdot19$&$2$&$O^{+}_{10}(3)$&$2^{15}\cdot3^{20}\cdot5^2\cdot7\cdot11^2\cdot13\cdot41$&$2^2$\\
  $S_{8}(3)$&$2^{14}\cdot3^{16}\cdot5^2\cdot7\cdot13\cdot41$&$2$&$A_{23}$&$2^{18}\cdot3^9\cdot5^4\cdot7^3\cdot11^2\cdot13\cdot17\cdot19\cdot23$&$2$\\
  $O_{9}(3)$&$2^{14}\cdot3^{16}\cdot5^2\cdot7\cdot13\cdot41$&$2$&$^2$$E_6(2)$&$2^{36}\cdot3^9\cdot5^2\cdot7^2\cdot11\cdot13\cdot17\cdot19$&$6$\\
  $Th$&$2^{15}\cdot3^{10}\cdot5^3\cdot7^2\cdot13\cdot19\cdot41$&$1$&$S_{12}(2)$&$2^{36}\cdot3^8\cdot5^3\cdot7^2\cdot11\cdot13\cdot17\cdot31$&$1$\\
  $A_{20}$&$2^{17}\cdot3^8\cdot5^4\cdot7^2\cdot11\cdot13\cdot17\cdot19$&$2$&$A_{24}$&$2^{21}\cdot3^{10}\cdot5^4\cdot7^3\cdot11^2\cdot13\cdot17\cdot19\cdot23$&$2$\\
  $Fi_{23}$&$2^{18}\cdot3^{13}\cdot5^2\cdot7\cdot11\cdot13\cdot17\cdot23$&$1$&$Fi^{\prime}_{24}$&$2^{21}\cdot3^{16}\cdot5^2\cdot7^3\cdot11\cdot13\cdot17\cdot23\cdot29$&$2$\\
  $Co_1$&$2^{21}\cdot3^{9}\cdot5^4\cdot7^2\cdot11\cdot13\cdot23$&$1$&$A_{25}$&$2^{21}\cdot3^{10}\cdot5^6\cdot7^3\cdot11^2\cdot13\cdot17\cdot19\cdot23$&$2$\\
  $A_{21}$&$2^{17}\cdot3^9\cdot5^4\cdot7^3\cdot11\cdot13\cdot17\cdot19$&$2$&$L_2(13^2)$&$2^3\cdot3\cdot5\cdot7\cdot13^2\cdot17$&$2^2$\\

  $O^{+}_{12}(2)$&$2^{30}\cdot3^{8}\cdot5^2\cdot7^2\cdot11\cdot17\cdot31$&$2$&$L_2(2^{10})$&$2^{10}\cdot3\cdot5^2\cdot11\cdot31\cdot41$&$2\cdot5$\\
  $O^{-}_{12}(2)$&$2^{30}\cdot3^{6}\cdot5^3\cdot7\cdot11\cdot13\cdot17\cdot31$&$2$&$A_{26}$&$2^{22}\cdot3^{10}\cdot5^6\cdot7^3\cdot11^2\cdot13^2\cdot17\cdot19\cdot23$&$2$\\
  $A_{27}$&$2^{22}\cdot3^{13}\cdot5^6\cdot7^3\cdot11^2\cdot13^2\cdot17\cdot19\cdot23$&$2$&$A_{28}$&$2^{24}\cdot3^{13}\cdot5^6\cdot7^4\cdot11^2\cdot13^2\cdot17\cdot19\cdot23$&$2$\\
  $A_{29}$&$2^{24}\cdot3^{13}\cdot5^6\cdot7^4\cdot11^2\cdot13^2\cdot17\cdot19\cdot23\cdot29$&$2$&$A_{30}$&$2^{25}\cdot3^{14}\cdot5^7\cdot7^4\cdot11^2\cdot13^2\cdot17\cdot19\cdot23\cdot29$&$2$\\
  $A_{31}$&$2^{25}\cdot3^{14}\cdot5^7\cdot7^4\cdot11^2\cdot13^2\cdot17\cdot19\cdot23\cdot29\cdot31$&$2$&$A_{32}$&$2^{30}\cdot3^{14}\cdot5^7\cdot7^4\cdot11^2\cdot13^2\cdot17\cdot19\cdot23\cdot29$&$2$\\
  $M$&$2^{46}\cdot3^{20}\cdot5^9\cdot7^6\cdot11^2\cdot13^3\cdot17\cdot19\cdot23\cdot29\cdot31\cdot41\cdot47\cdot59\cdot71$&$1$&$B$&$2^{41}\cdot3^{13}\cdot5^6\cdot7^2\cdot11\cdot13\cdot17\cdot19\cdot23\cdot31\cdot47$&$1$\\
  \hline
 \end{longtabu} }

Since all groups in Table \ref{tb:M} except $L_2(71)$ and  $M$ have their orders and their automorphism group orders not divided by 71, we have  $71\in\pi(H)$. Hence,   $|H_{71}|=71$ and $H_{71}\unlhd G$. Thereby,  we get a contradiction to $t(G)=1$ since $|G/C_G(H_{71})|\big|2\cdot5\cdot7$.

 Suppose  that $K/H\cong M$, then  $G\cong S\cong M$, we are done. Assume that $K/H\cong L_2(71)$, since $|L_2(71)|=2^3\cdot3^2\cdot5\cdot7\cdot71$ and $|\rm{Out}(L_2(71))|=2$, we deduce that  $19\in\pi(H)$. Similarly,  we get a contradiction to $t(G)=1$ for $|G/C_G(H_{19})|\big|2\cdot3^2$.

 \textbf{Step 2} If $S= B$, then $G\cong B$.

In this case,  $|G|=|B|=2^{41}\cdot3^{13}\cdot5^6\cdot7^2\cdot11\cdot13\cdot17\cdot19\cdot23\cdot31\cdot47$. By Lemma \ref{lemT}, $K/H$ is a group in the list of  \cite{ATLAS}, which are groups in Table \ref{tb:B}.

\setlength{\LTleft}{0pt} \setlength{\LTright}{0pt} %表格与页面左右边缘之间的矩离均为０
\setlength\LTleft{0in}
\setlength\LTright{+1in plus 2 fill}
\setlength{\tabcolsep}{3pt}
{\tiny
\begin{longtabu}{p{1.5cm}p{3.5cm}c|p{1.5cm}p{4.7cm}c}
\caption{The non-abelian simple groups$K/H$ of orders dividing $|B|$ }~\label{tb:B}\\

\hline

  $K/H$       & $|K/H|$
  & $|\mathrm{Out}(K/H)|$& $K/H$       & $|K/H|$
  & $|\mathrm{Out}(K/H)|$    \\
 \hline
 \endfirsthead
 \multicolumn{4}{l}{Table \ref{tb:B}: (continued) ~} \\
 \hline
 $K/H$       & $|K/H|$
 & $|\mathrm{Out}(K/H)|$ & $K/H$       & $|K/H|$
  & $|\mathrm{Out}(K/H)|$\\
 \hline
 \endhead

\hline
  $A_5$&$2^{2}\cdot3\cdot5$&$2$&$U_5(2)$&$2^{10}\cdot3^5\cdot5\cdot11$&$2$\\
  $L_2(7)$&$2^3\cdot3\cdot7$&$2$&$HS$&$2^{9}\cdot3^2\cdot5^3\cdot7\cdot11$&$2$\\
  $A_6$&$2^{3}\cdot3^2\cdot5$&$4$&$J_3$&$2^{7}\cdot3^5\cdot5\cdot17\cdot19$&$2$\\
  $L_2(8)$&$2^{3}\cdot3^2\cdot7$&$3$&$L_5(4)$&$2^{20}\cdot3^5\cdot5^2\cdot7\cdot11\cdot17\cdot31$&$2^2$\\
  $L_2(11)$&$2^{2}\cdot3\cdot5\cdot11$&$2$&$O_8^{+}(2)$&$2^{12}\cdot3^5\cdot5^2\cdot7$&$2\cdot3$\\ $L_2{(13)}$&$2^2\cdot3\cdot7\cdot13$&$2$&$O_8^{-}(2)$&$2^{12}\cdot3^4\cdot5\cdot7\cdot17$&$2$\\
  $L_2(17)$&$2^4\cdot3^2\cdot17$&$2$&$^3D_4{(2)}$&$2^{12}\cdot3^4\cdot7^2\cdot13$&$3$\\
  $A_7$&$2^{3}\cdot3^2\cdot5\cdot7$&$2$&$A_{12}$&$2^9\cdot3^5\cdot5^2\cdot7\cdot11$&$2$\\
  $L_2(19)$&$2^2\cdot3^2\cdot5\cdot19$&$2$&$M_{24}$&$2^{10}\cdot3^3\cdot5\cdot7\cdot11\cdot23$&$1$\\
  $L_2(16)$&$2^4\cdot3\cdot5\cdot17$&$2^2$&$G_2(4)$&$2^{12}\cdot3^3\cdot5^2\cdot7\cdot13$&$2$\\
  $L_3(3)$&$2^4\cdot3^3\cdot13$&$2$&$M^cL$&$2^{7}\cdot3^6\cdot5^3\cdot7\cdot11$&$2$\\
  $U_3(3)$&$2^{5}\cdot3^3\cdot7$&$2$&$L_3(9)$&$2^7\cdot3^6\cdot5\cdot7\cdot13$&$2^2$\\
  $L_2(23)$&$2^{3}\cdot3\cdot11\cdot23$&$2$&$L_4(4)$&$2^{12}\cdot3^4\cdot5^2\cdot7\cdot17$&$2^2$\\
  $L_2(25)$&$2^3\cdot3\cdot5^2\cdot13$&$2^2$&$U_4(4)$&$2^{12}\cdot3^2\cdot5^3\cdot13\cdot17$&$2^2$\\
  $M_{11}$&$2^4\cdot3^2\cdot5\cdot11$&$1$&$S_4(8)$&$2^{12}\cdot3^4\cdot5\cdot7^2\cdot13$&$2\cdot3$\\
  $L_2(27)$&$2^2\cdot3^3\cdot7\cdot13$&$2\cdot3$&$L_3(16)$&$2^{12}\cdot3^2\cdot5^2\cdot7\cdot13\cdot17$&$2^3\cdot3$\\
  $L_2(29)$&$2^2\cdot3\cdot5\cdot7\cdot29$&$2$&$A_{13}$&$2^{9}\cdot3^5\cdot5^2\cdot7\cdot11\cdot13$&$2$\\
  $L_2(31)$&$2^5\cdot3\cdot5\cdot31$&$2$&$S_6(3)$&$2^{9}\cdot3^9\cdot5\cdot7\cdot7\cdot13$&$2$\\
  $A_8$&$2^{6}\cdot3^2\cdot5\cdot7$&$2$&$O_7(3)$&$2^9\cdot3^9\cdot5\cdot7\cdot13$&$2$\\
  $L_3(4)$&$2^{6}\cdot3^2\cdot5\cdot7$&$2^2\cdot3$&$G_2(5)$&$2^{6}\cdot3^3\cdot5^6\cdot7\cdot31$&$1$\\ $U_{4}(2)$&$2^2\cdot3^4\cdot5$&$2$&$L_4(5)$&$2^{7}\cdot3^2\cdot5^6\cdot13\cdot31$&$8$\\ $Sz(8)$&$2^6\cdot5\cdot7\cdot13$&$3$&$U_6(2)$&$2^{15}\cdot3^6\cdot5\cdot7\cdot11$&$2\cdot3$\\
  $L_2(32)$&$2^5\cdot3\cdot11\cdot31$&$5$&$L_6(2)$&$2^{15}\cdot3^4\cdot5\cdot7^2\cdot31$&$2$\\
  $L_2(47)$&$2^4\cdot3\cdot23\cdot47$&$2$&$A_{14}$&$2^{10}\cdot3^5\cdot5^2\cdot7^2\cdot11\cdot13$&$2$\\
  $U_4(5)$&$2^{5}\cdot3^4\cdot5^4\cdot7\cdot13$&$2^2$&$S_{8}(2)$&$2^{16}\cdot3^5\cdot5^2\cdot7\cdot17$&$1$\\
  $L_{2}(49)$&$2^4\cdot3\cdot5^2\cdot7^2$&$2^2$&$U_3(4)$&$2^6\cdot3\cdot5^2\cdot13$&$2^2$\\
  $M_{12}$&$2^{6}\cdot3^3\cdot5\cdot11$&$2$&$L_{3}(25)$&$2^{7}\cdot3^2\cdot5^6\cdot7\cdot13\cdot31$&$12$\\
  $U_3(5)$&$2^4\cdot3^2\cdot5^3\cdot7$&$6$&$J_1$&$2^3\cdot3\cdot5\cdot7\cdot11\cdot19$&$1$\\
  $Suz$&$2^{13}\cdot3^7\cdot5^2\cdot7\cdot11\cdot13$&$2$&$A_9$&$2^6\cdot3^4\cdot5\cdot7$&$2$\\
  $L_2(64)$&$2^6\cdot3^2\cdot5\cdot7\cdot13$&$2\cdot3$&$Co_3$&$2^{10}\cdot3^7\cdot5^3\cdot7\cdot11\cdot23$&$1$\\
  $A_{15}$&$2^{10}\cdot3^6\cdot5^2\cdot7^6\cdot19\cdot13$&$4$&$L_4(7)$&$2^{9}\cdot3^4\cdot5^2\cdot7\cdot11\cdot13$&$2$\\
  $L_3(5)$&$2^5\cdot3\cdot5^3\cdot31$&$2$&$S_6(4)$&$2^{18}\cdot3^4\cdot5^3\cdot7\cdot13\cdot17$&$2$\\
  $M_{22}$&$2^7\cdot3^2\cdot5\cdot7\cdot11$&$2$&$O_8^{+}(3)$&$2^{12}\cdot3^{12}\cdot5^2\cdot7\cdot13$&$2^3\cdot3$\\
  $J_2$&$2^7\cdot3^3\cdot5^2\cdot7$&$2$&$L_2(125)$&$2^2\cdot3^2\cdot5^3\cdot7\cdot31$&$2\cdot3$\\
  $S_4(4)$&$2^8\cdot3^2\cdot5^2\cdot17$&$2^2$&$A_{16}$&$2^{14}\cdot3^{6}\cdot5^3\cdot7^2\cdot11\cdot13$&$2$\\
  $S_6(2)$&$2^9\cdot3^4\cdot5\cdot7$&$1$&$O_{10}^{+}(2)$&$2^{20}\cdot3^{5}\cdot5^2\cdot7\cdot17\cdot31$&$2$\\
  $A_{10}$&$2^7\cdot3^4\cdot5^2\cdot7$&$2$&$O_{10}^{-}(2)$&$2^{20}\cdot3^{6}\cdot5^2\cdot7\cdot11\cdot17$&$2$\\
  $L_3(7)$&$2^5\cdot3^2\cdot7^3\cdot19$&$2\cdot3$&$G_2(3)$&$2^6\cdot3^6\cdot7\cdot13$&$2$\\
  $U_4(3)$&$2^7\cdot3^6\cdot5\cdot7$&$2^3$&$Co_2$&$2^{18}\cdot3^6\cdot5^3\cdot7\cdot11\cdot23$&$1$\\
  $S_4(5)$&$2^6\cdot3^2\cdot5^4\cdot13$&$2$&$L_5(2)$&$2^{10}\cdot3^2\cdot5\cdot7\cdot31$&$2$\\
  $U_3(8)$&$2^9\cdot3^4\cdot7\cdot19$&$2$&$Fi_{22}$&$2^{17}\cdot3^9\cdot5^2\cdot7\cdot11\cdot13$&$2$\\
  $L_4(3)$&$2^7\cdot3^6\cdot5\cdot13$&$2^2$&$A_{17}$&$2^{14}\cdot3^{6}\cdot5^3\cdot7^2\cdot11\cdot13\cdot17$&$2$\\
  $M_{23}$&$2^7\cdot3^2\cdot5\cdot7\cdot11\cdot23$&$1$&$S_{10}(2)$&$2^{25}\cdot3^6\cdot5^2\cdot7\cdot11\cdot17\cdot31$&$1$\\
  $^2F_4(2)^{\prime}$&$2^{11}\cdot3^3\cdot5^2\cdot13$&$2$&$HN$&$2^{14}\cdot3^6\cdot5^6\cdot7\cdot11\cdot19$&$2$\\
  $A_{11}$&$2^7\cdot3^4\cdot5^2\cdot7\cdot11$&$2$&$A_{18}$&$2^{15}\cdot3^8\cdot5^3\cdot7^2\cdot11\cdot13\cdot17$&$2$\\
  $F_4(2)$&$2^{24}\cdot3^6\cdot5^2\cdot7^2\cdot13\cdot17$&$2$&$L_6(4)$&$2^{30}\cdot3^{6}\cdot5^3\cdot7^2\cdot11\cdot13\cdot17\cdot31$&$12$\\
  $A_{19}$&$2^{15}\cdot3^8\cdot5^3\cdot7^2\cdot11\cdot13\cdot17\cdot19$&$2$&$^2$$E_6(2)$&$2^{36}\cdot3^9\cdot5^2\cdot7^2\cdot11\cdot13\cdot17\cdot19$&$6$\\
  $S_{12}(2)$&$2^{36}\cdot3^8\cdot5^3\cdot7^2\cdot11\cdot13\cdot17\cdot31$&$1$&$A_{20}$&$2^{17}\cdot3^8\cdot5^4\cdot7^2\cdot11\cdot13\cdot17\cdot19$&$2$\\
  $Fi_{23}$&$2^{18}\cdot3^{13}\cdot5^2\cdot7\cdot11\cdot13\cdot17\cdot23$&$1$&$Co_1$&$2^{21}\cdot3^{9}\cdot5^4\cdot7^2\cdot11\cdot13\cdot23$&$1$\\
 $O^{-}_{12}(2)$&$2^{30}\cdot3^{6}\cdot5^3\cdot7\cdot11\cdot13\cdot17\cdot31$&$2$&$O^{+}_{12}(2)$&$2^{30}\cdot3^{8}\cdot5^2\cdot7^2\cdot11\cdot17\cdot31$&$2$\\
 $B$&$2^{41}\cdot3^{13}\cdot5^6\cdot7^2\cdot11\cdot13\cdot17\cdot19\cdot23\cdot31\cdot47$&$1$\\
  \hline
 \end{longtabu}}

 Suppose that $K/H$ is isomorphic to one of the groups in Table \ref{tb:B} except $L_2(47)$ and $B$, then  $47\in\pi(H)$. Clearly,   $|H_{47}|=47$ and $H_{47}\unlhd G$. It follows from $|G/C_G(H_{47})|\big|2\cdot23$ that $\pi(G)\smallsetminus\{23\}\subseteq\pi(C_G(H_{47}))$. And since $\Gamma(G)$ is disconnected, we have $23\not\in\pi(C_G(H_{47}))$. Considering the action of a $23-$element $g$ on $C_G(H_{47})$, because $13\in\pi(C_G(H_{47}))$, in view of  Lemma \ref{ty4} and \ref{ty5}, we deduce that $23\cdot13\in\pi_e(G)$. This implies that $\Gamma(G)$ is connected, a contradiction.

 Suppose  that   $K/H\cong B$, then $G\cong S\cong B$, we are done. If $K/H\cong L_2(47)$. Since $|L_2(47)|=2^4\cdot3\cdot23\cdot47$ and $|\rm{Out}(L_2(47))|=2$, we see that  $13\in\pi(H)$. Similarly,  we get a contradiction to $t(G)=1$ by $|G/C_G(H_{13})|\big|2^2\cdot3$.

\end{proof}

\begin{lemma} \label{le6} Let $G$ be a group and   $S$   one of the groups: $Ly$, $ O^{\prime}N$, $M^{c}L$, $Th$, $HN$, $He$, $Ru$, $HS$, and $Suz$. If $|G|=|S|$ and the prime graph of $G$ is disconnected, then $G\cong S$.\end{lemma}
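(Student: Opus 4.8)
The plan is to run, for each of the nine groups, the same scheme already used in Lemmas \ref{le1}--\ref{le5}. By Corollary \ref{ty12} the group $G$ has a normal series $1\unlhd H\unlhd K\unlhd G$ with $H$ nilpotent, $H$ and $G/K$ being $\pi_1$-groups, $K/H$ a non-abelian simple group and $|G/K|$ dividing $|\mathrm{Out}(K/H)|$; and since none of $Ly,O^{\prime}N,M^{c}L,Th,HN,He,Ru,HS,Suz$ is the Monster, Lemma \ref{lemT}(II) shows that $K/H$ occurs in the list of \cite{ATLAS}. For each $S$ I would first compile, exactly as in the tables of the previous lemmas, the finite list of non-abelian simple groups whose order divides $|S|$, recording their outer automorphism orders; the task is then to discard every entry except $K/H\cong S$, for this yields $|H|=|G/K|=1$ and $G\cong S$.

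For a typical candidate $K/H\not\cong S$ there is a prime $p\in\pi(S)$ dividing neither $|K/H|$ nor $|\mathrm{Out}(K/H)|$, usually an isolated vertex of $\Gamma(S)$ (e.g. $67$ for $Ly$, $31$ or $19$ for $Th$ and $O^{\prime}N$, $29$ for $Ru$, $19$ for $HN$, $17$ for $He$, $11$ for $M^{c}L,HS$). Then $p\mid|H|$, so the characteristic subgroup $H_p$ of the nilpotent normal subgroup $H$ is normal in $G$ with $|H_p|=p$ (as $p\parallel|S|$), whence $|G/C_G(H_p)|$ divides $p-1$. Consequently every prime of $\pi(G)$ not dividing $p-1$ lies in $\pi(C_G(H_p))$ and commutes with $H_p$, producing an element of order $pq$. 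If this exhausts $\pi(G)$ the graph $\Gamma(G)$ is connected; otherwise, for each residual prime $q\mid p-1$ I would let a $q$-element act on the normal subgroup $C_G(H_p)$, use Lemma \ref{ty4} to extract an invariant Sylow $r$-subgroup with $r\in\pi(C_G(H_p))$, and invoke Lemma \ref{ty5} (after checking $q\nmid|\mathrm{GL}(m,r)|$) to obtain an element of order $qr$. Either way all vertices collapse into one component, so $t(G)=1$, contradicting the hypothesis.

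A genuinely different situation arises for the proper simple sections whose prime set already equals $\pi(S)$, so that no prime can be forced into $H$: these are $M_{22},A_{11}$ for $M^{c}L$, $M_{22}$ for $HS$, $A_{13}$ for $Suz$, and the crucial $J_1$ for $HN$. In each of these $|G/K|=1$ and $H$ is a nilpotent group of known order, and I would attack the isolated prime(s) $p$ of $\Gamma(S)$ directly through the action on $H$. When $H$ is small (as for $M^{c}L,HS,Suz$) one checks that $p\nmid|\mathrm{GL}(m,q)|$ for the relevant Sylow subgroup $H_q$ and applies Corollary \ref{cor51} to get $pq\in\pi_e(G)$, again forcing $t(G)=1$.

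The hard part is $J_1\subseteq HN$, where $|H|=2^{11}\cdot3^5\cdot5^5$ is large and, because $11$ divides each of $2^{10}-1$, $3^5-1$ and $5^5-1$, Corollary \ref{cor51} fails to connect the vertex $11$ to any of $2,3,5$. To get around this I would exploit the simple composition factor $J_1=G/H$ together with the prime $19$: the $2^{\prime}$-part of $|\mathrm{Aut}(H_2)|$ divides $|\mathrm{GL}(11,2)|$, and $19\nmid|\mathrm{GL}(11,2)|$, so $19\nmid|G/C_G(H_2)|$. Since $J_1$ is simple, $C_G(H_2)H/H$ is $1$ or $J_1$, and the value $1$ would make $J_1=G/H$ a quotient of $G/C_G(H_2)$ and force $19\mid|G/C_G(H_2)|$, a contradiction. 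Hence $C_G(H_2)$ maps onto $J_1$, so it contains an element of order $11$ centralizing $H_2\ne 1$, whence $22\in\pi_e(G)$. Together with $19\sim 2,3,5$ and $7\sim 3$ (all from Corollary \ref{cor51}) this makes $\Gamma(G)$ connected and eliminates $J_1$. Apart from this single delicate point, every case reduces to the uniform centralizer-and-coprime-action template, so once the candidate tables and the attendant $\mathrm{GL}$-order checks are in place the lemma follows.
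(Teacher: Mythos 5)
Your proposal is correct, and for almost every candidate $K/H$ it coincides with the paper's own proof: the same reduction through Corollary \ref{ty12} and Lemma \ref{lemT}(II), the same ATLAS tables of simple groups with order dividing $|S|$, and the same elimination pattern (a prime forced into $\pi(H)$, the normal subgroup $H_p$, the bound $|G/C_G(H_p)|\big|\,|\mathrm{Aut}(H_p)|$, then coprime action on $C_G(H_p)$ via Lemmas \ref{ty4} and \ref{ty5} for any residual prime). The one genuine divergence is the case you rightly flag as delicate, $K/H\cong J_1$ when $S=HN$. The paper stays inside its template: it takes $H_5$ of order $5^5$, gets $\{2,3,7,19\}\subseteq\pi(C_G(H_5))$ because $7,19\nmid|\mathrm{GL}(5,5)|$ and the $2$- and $3$-parts of $|\mathrm{GL}(5,5)|$ are smaller than those of $|G|$, concludes $11\notin\pi(C_G(H_5))$ from disconnectedness, and then lets an $11$-element act on $C_G(H_5)$ to obtain an invariant Sylow $19$-subgroup and hence an element of order $11\cdot19$. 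You instead exploit the simplicity of $G/H\cong J_1$: from $19\nmid|\mathrm{GL}(11,2)|$ you get $19\nmid|G/C_G(H_2)|$, so the normal subgroup $C_G(H_2)H/H$ of $J_1$ cannot be trivial, hence $C_G(H_2)H=G$, giving the edge $2\sim11$; combined with the Corollary \ref{cor51} edges $19\sim2,3,5$ and $7\sim3$ this connects $\Gamma(G)$. Both arguments are sound; the paper's buys uniformity (centralizers and coprime actions only), while yours uses the normal-series structure itself (a simple section acting on the quotient), a tool the paper never invokes and which would survive in situations lacking a convenient bridge prime such as $19$ inside $C_G(H_5)$.

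One small gap in your treatment of the other full-spectrum candidates: for $S=HS$, $K/H\cong M_{22}$ one has $|H|\,|G/K|=2^2\cdot5^2$, so $3\nmid|H|$, and connecting $7$ and $11$ to $5$ by Corollary \ref{cor51} does not yet force $t(G)=1$, since $\{3\}$ could a priori be a separate component. You need an extra remark, e.g.\ $6\in\pi_e(M_{22})\subseteq\pi_e(G)$, or simply the paper's centralizer bound, which settles $2$ and $3$ automatically because the $2$- and $3$-parts of $|\mathrm{GL}(2,5)|=2^5\cdot3\cdot5$ are strictly smaller than those of $|G|$. The analogous issue does not arise for $M^{c}L$ and $Suz$, where every prime of $\pi(S)\setminus\pi(H)$ is prime to the relevant $|\mathrm{GL}(m,q)|$, so your sketch closes there as written.
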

\begin{proof}
We prove the lemma upon what $S$ is one by one.

\textbf{Step 1} If $S= Ly$, then $G\cong Ly$.

 In this case,  $|G|=|Ly|=2^{8}\cdot3^{7}\cdot5^6\cdot7\cdot11\cdot31\cdot37\cdot67$,
  $K/H$ is one of the groups in Table \ref{tb:Ly} by  \cite{ATLAS} and Lemma \ref{lemT}.

 \begin{table}[H]
\setlength{\abovecaptionskip}{10pt}
\setlength{\belowcaptionskip}{0pt}
\centering
\caption{The non-abelian simple groups $K/H$ of orders dividing $|Ly|$ }~\label{tb:Ly}
\begin{tabular}{p{1.8cm}lc|p{1.8cm}lc}
 \hline
  $K/H$       & $|K/H|$
  & $|\mathrm{Out}(K/H)|$& $K/H$       & $|K/H|$
  & $|\mathrm{Out}(K/H)|$     \\
  \hline
  $A_5$&$2^{2}\cdot3\cdot5$&$2$&$J_2$&$2^7\cdot3^3\cdot5^2\cdot7$&$2$\\
  $L_2(7)$&$2^3\cdot3\cdot7$&$2$&$L_2(125)$&$2^2\cdot3^2\cdot5^3\cdot7\cdot31$&$6$\\
  $A_6$&$2^{3}\cdot3^2\cdot5$&$4$&$A_{10}$&$2^7\cdot3^4\cdot5^2\cdot7$&$2$\\
  $L_2(8)$&$2^{3}\cdot3^2\cdot7$&$3$&$U_4(3)$&$2^7\cdot3^6\cdot5\cdot7$&$2^3$\\
  $L_2(11)$&$2^{2}\cdot3\cdot5\cdot11$&$2$&$A_{11}$&$2^7\cdot3^4\cdot5^2\cdot7\cdot11$&$2$\\ $A_7$&$2^{3}\cdot3^2\cdot5\cdot7$&$2$&$M^cL$&$2^7\cdot3^6\cdot5^3\cdot7\cdot11$&$2$\\ $U_3(3)$&$2^{5}\cdot3^3\cdot7$&$2$&$G_2(5)$&$2^6\cdot3^3\cdot5^6\cdot7\cdot31$&$1$\\ $M_{11}$&$2^4\cdot3^2\cdot5\cdot11$&$2^2$&$L_2(31)$&$2^5\cdot3\cdot5\cdot31$&$2$\\
  $A_8$&$2^6\cdot3^2\cdot5\cdot7$&$2$&$L_3(4)$&$2^6\cdot3^2\cdot5\cdot7$&$12$\\
  $U_4(2)$&$2^6\cdot3^4\cdot5$&$2$&$L_2(32)$&$2^5\cdot3\cdot11\cdot31$&$5$\\
  $M_{12}$&$2^{6}\cdot3^3\cdot5\cdot11$&$2$&$U_3(5)$&$2^4\cdot3^2\cdot5^3\cdot7$&$6$\\
  $A_9$&$2^6\cdot3^4\cdot5\cdot7$&$2$&$L_3(5)$&$2^{5}\cdot3\cdot5^3\cdot31$&$2$\\
  $M_{22}$&$2^7\cdot3^2\cdot5\cdot7\cdot11$&$2$&$Ly$&$2^{8}\cdot3^7\cdot5^6\cdot7\cdot11\cdot31\cdot37\cdot67$&$1$\\
  \hline
  \end{tabular}\end{table}

Suppose that $K/H$ is isomorphic to one of the groups in Table \ref{tb:Ly} except $Ly$, then   $67\in\pi(H)$. Surely,   $|H_{67}|=67$ and $H_{67}\unlhd G$. It follows from $|G/C_G(H_{67})|\big|2\cdot3\cdot11$ that $\pi(G)\smallsetminus\{11\}\subseteq\pi(C_G(H_{67}))$. Since $\Gamma(G)$ is disconnected , we see that $11\not\in\pi(C_G(H_{67}))$. Considering the action of a $11-$element $g$ on $C_G(H_{67})$, because $31\in\pi(C_G(H_{67}))$ and using Lemma \ref{ty4} and \ref{ty5}, we deduce that $11\cdot31\in\pi_e(G)$, which implies that $\Gamma(G)$ is connected, a contradiction. Hence, $K/H\cong Ly$. Therefore, $G\cong S\cong Ly$, as desired.

\textbf{Step 2} If $S= O^{\prime}N$, then $G\cong O^{\prime}N$.

 In this case,  $|G|=|O^{\prime}N|=2^{9}\cdot3^{4}\cdot5\cdot7^3\cdot11\cdot19\cdot31$. Then
  $K/H$ is one of the groups in Table \ref{tb:O'N} by  \cite{ATLAS} and  Lemma \ref{lemT}.

 \begin{table}[H]
\setlength{\abovecaptionskip}{10pt}
\setlength{\belowcaptionskip}{0pt}
\centering
\caption{The non-abelian simple groups K/H of orders dividing $|O^{\prime}N|$~}\label{tb:O'N}
\setlength{\tabcolsep}{4mm}{
\begin{tabular}{lll|lll}
 \hline
 $K/H$       & $|K/H|$
  & $|\mathrm{Out}(K/H)|$& $K/H$       & $|K/H|$
  & $|\mathrm{Out}(K/H)|$     \\
  \hline
  $A_5$& $2^{2}\cdot3\cdot5$ & $2$&$A_8$&$2^{6}\cdot3^2\cdot5\cdot7$&$2$\\
  $L_2(7)$&$2^3\cdot3\cdot7$&$2$&$L_3(4)$&$2^{6}\cdot3^2\cdot5\cdot7$&$2^2\cdot3$\\
  $A_6$& $2^{3}\cdot3^2\cdot5$&$4$&$U_4(2)$&$2^6\cdot3^4\cdot5$&$2$\\
  $L_2(8)$&$2^{3}\cdot3^2\cdot7$&$3$&$L_2(32)$&$2^5\cdot3\cdot11\cdot31$&$5$\\
  $L_2(11)$&$2^{2}\cdot3\cdot5\cdot11$&$2$&$M_{12}$&$2^{6}\cdot3^3\cdot5\cdot11$&$2$\\
  $A_7$&$2^{3}\cdot3^2\cdot5\cdot7$&$2$&$A_9$&$2^6\cdot3^4\cdot5\cdot7$&$2$\\
  $L_2(19)$&$2^2\cdot3^2\cdot5\cdot19$&$2$&$M_{22}$&$2^7\cdot3^2\cdot5\cdot7\cdot11$&$2$\\
  $U_3(3)$&$2^{5}\cdot3^3\cdot7$&$2$&$S_6(2)$&$2^9\cdot3^4\cdot5\cdot7$&$1$\\
  $M_{11}$&$2^4\cdot3^2\cdot5\cdot11$&$1$&$U_3(8)$&$2^9\cdot3^4\cdot7\cdot19$&$18$\\ $L_2(31)$&$2^5\cdot3\cdot5\cdot31$&$2$&$O^{\prime}N$&$2^9\cdot3^4\cdot5\cdot7^3\cdot11\cdot19\cdot31$&$2$\\
  \hline
  \end{tabular}}\end{table}

Suppose that $K/H$ is isomorphic to one of the groups in Table \ref{tb:O'N} except $ O^{\prime}N$, $L_2(31)$ and  $L_2(32)$, then $31\in\pi(H)$. Of course, $|H_{31}|=31$,  $H_{31}\unlhd G$. It follows from $|G/C_G(H_{31})|\big|2\cdot3\cdot5$ that $\pi(G)\smallsetminus\{5\}\subseteq\pi(C_G(H_{31}))$. Further, since $\Gamma(G)$ is disconnected , we have $5\not\in\pi(C_G(H_{31}))$. Now considering the action of a $5-$element $g$ on $C_G(H_{31})$, and using the fact that $19\in\pi(C_G(H_{31}))$.  Lemma \ref{ty4} and \ref{ty5} yield that $5\cdot19\in\pi_e(G)$,  which implies that $\Gamma(G)$ is connected, a contradiction.

 Suppose  that $K/H\cong L_2(31)$ or $L_2(32)$, then  $19\in\pi(H)$ and $|H_{19}|=19$, also $H_{19}\unlhd G$. Hence, we get  contradictions to $t(G)=1$ as $|G/C_G(H_{19})|\big|2\cdot3^2$.

 Now we have $K/H\cong O^{\prime}N$, then  $G\cong S\cong O^{\prime}N$, as desired.

\textbf{ Step 3} If $S= M^{c}L$, then $G\cong M^{c}L$.

By $|G|=|M^{c}L|=2^{7}\cdot3^6\cdot5^3\cdot7\cdot11$ and Lemma \ref{lemT}, we get that $K/H$ is a simple group in the list of  \cite{ATLAS}, which are groups in Table \ref{tb:McL}.

\begin{table}[H]
\setlength{\abovecaptionskip}{10pt}
\setlength{\belowcaptionskip}{0pt}
\centering
\caption{The non-abelian simple groups $K/H$ orders dividing $|M^cL|$~}\label{tb:McL}
\setlength{\tabcolsep}{4mm}{
\begin{tabular}{llc|llc}
 \hline
 $K/H$       & $|K/H|$
 & $|\mathrm{Out}(K/H)|$& $K/H$       & $|K/H|$
 & $|\mathrm{Out}(K/H)|$      \\
  \hline
  $A_5$& $2^{2}\cdot3\cdot5$ & $2$&$U_{4}(2)$&$2^6\cdot3^4\cdot5$&$2$\\
  $L_2(7)$&$2^3\cdot3\cdot7$&$2$&$M_{12}$&$2^{6}\cdot3^3\cdot5\cdot11$&$2$\\
  $A_6$&$2^{3}\cdot3^2\cdot5$&$4$&$U_3(5)$&$2^4\cdot3^2\cdot5^3\cdot7$&$2\cdot3$\\
  $L_2(8)$&$2^{3}\cdot3^2\cdot7$&$3$&$A_9$&$2^6\cdot3^4\cdot5\cdot7$&$2$\\
  $L_2(11)$&$2^{2}\cdot3\cdot5\cdot11$&$2$&$M_{22}$&$2^7\cdot3^2\cdot5\cdot7\cdot11$&$2$\\
  $A_7$&$2^{3}\cdot3^2\cdot5\cdot7$&$2$&$J_2$&$2^7\cdot3^3\cdot5^2\cdot7$&$2$\\
  $U_3(3)$&$2^{5}\cdot3^3\cdot7$&$2$&$A_{10}$&$2^7\cdot3^4\cdot5^2\cdot7$&$2$\\
  $M_{11}$&$2^4\cdot3^2\cdot5\cdot11$&$1$&$U_4(3)$&$2^7\cdot3^6\cdot5\cdot7$&$2^3$\\
  $A_8$&$2^{6}\cdot3^2\cdot5\cdot7$&$2$&$A_{11}$&$2^7\cdot3^4\cdot5^2\cdot7\cdot11$&$2$\\ $L_3(4)$&$2^{6}\cdot3^2\cdot5\cdot7$&$2^2\cdot3$&$M^cL$&$2^7\cdot3^6\cdot5^3\cdot7\cdot11$&$2$\\
  \hline
  \end{tabular}}\end{table}

Suppose that $K/H$ is isomorphic to one of the groups in in Table \ref{tb:O'N} except $ M^{c}L$, $A_{11}$, $M_{22}$, $M_{12}$, $M_{11}$ and $L_{2}(11)$.  We conclude that  $11\in\pi(H)$. Surely,  $|H_{11}|=11$ and  $H_{11}\unlhd G$. Hence, we come to  contradictions to $t(G)=1$ for $|G/C_G(H_{11})|\big|2\cdot5$.

Suppose  that $K/H$ is isomorphic to one of $ M^{c}L$, $A_{11}$, $M_{22}$, $M_{12}$, $M_{11}$ and $L_{2}(11)$.  If $K/H\cong M^{c}L$, then  $G\cong S\cong M^{c}L$, as desired. For the remaining cases, it always follows that  $5\in\pi(H)$. Here $|H_{5}|=5$ or $5^2$ and $H_{5}\unlhd G$. Consequently,  we can deduce  contradictions to $t(G)=1$ by  $|G/C_G(H_{5})|\big||\rm{GL(2,5)}|$ and $|\rm{GL(2,5)}|=2^5\cdot3\cdot5$.

\textbf{Step 4} If $S= Th$, then  $G\cong Th$.

By $|G|=|Th|=2^{15}\cdot3^{10}\cdot5^3\cdot7^2\cdot13\cdot19\cdot31$ and Lemma \ref{lemT},
we get that $K/H$ is in the list of \cite{ATLAS}, which are in Table \ref{tb:Th}.

\setlength{\LTleft}{0pt} \setlength{\LTright}{0pt} %表格与页面左右边缘之间的矩离均为０
\setlength\LTleft{0in}
\setlength\LTright{+1in plus 2 fill}
\setlength{\tabcolsep}{3pt}
\begin{longtabu}{p{2.0cm}p{3.0cm}c|p{2.3cm}p{3.9cm}c}
	\caption{The non-abelian simple groups $K/H$ of orders dividing $|Th|$ }~\label{tb:Th}\\
	
	\hline
	
	$K/H$       & $|K/H|$
	& $|\mathrm{Out}(K/H)|$& $K/H$       & $|K/H|$
	& $|\mathrm{Out}(K/H)|$    \\
	\hline
	\endfirsthead
	\multicolumn{4}{l}{Table \ref{tb:Th}: (continued) ~} \\
	\hline
	$K/H$       & $|K/H|$
	& $|\mathrm{Out}(K/H)|$ & $K/H$       & $|K/H|$
	& $|\mathrm{Out}(K/H)|$\\
	\hline
	\endhead
  \hline
  $A_5$& $2^{2}\cdot3\cdot5$&$2$&$L_3(5)$&$2^{5}\cdot3\cdot5^3\cdot31$&$2$\\
  $L_2(7)$&$2^3\cdot3\cdot7$&$2$&$J_2$&$2^7\cdot3^3\cdot5^2\cdot7$&$2$\\
  $A_6$&$2^{3}\cdot3^2\cdot5$&$4$&$L_{2}(125)$&$2^2\cdot3^2\cdot5^3\cdot7\cdot31$&$6$\\
  $L_2(8)$&$2^{3}\cdot3^2\cdot7$&$3$&$A_{10}$&$2^7\cdot3^4\cdot5^2\cdot7$&$2$\\
  $L_2(13)$&$2^{2}\cdot3\cdot7\cdot13$&$2$&$U_4(3)$&$2^7\cdot3^6\cdot5\cdot7$&$2^3$\\
  $A_7$&$2^{3}\cdot3^2\cdot5\cdot7$&$2$&$G_2(3)$&$2^6\cdot3^6\cdot7\cdot13$&$2$\\
  $L_3(3)$&$2^{4}\cdot3^3\cdot13$&$2$&$U_3(8)$&$2^9\cdot3^4\cdot7\cdot19$&$18$\\
  $U_3(3)$&$2^{3}\cdot3\cdot5^2\cdot13$&$2$&$L_4(3)$&$2^7\cdot3^6\cdot5\cdot13$&$2^2$\\
  $L_{2}(25)$&$2^4\cdot3^2\cdot5\cdot13$&$2^2$&$L_5(2)$&$2^{10}\cdot3^2\cdot5\cdot7\cdot31$&$2$\\
  $L_{2}(27)$&$2^2\cdot3^3\cdot7\cdot13$&$6$&$^2$$F_4(2)^{\prime}$&$2^{11}\cdot3^3\cdot5^2\cdot13$&$2$\\
  $A_8$&$2^{6}\cdot3^2\cdot5\cdot7$&$2$&$L_3(9)$&$2^7\cdot3^6\cdot5\cdot7\cdot13$&$2^2$\\ $L_3(4)$&$2^{6}\cdot3^2\cdot5\cdot7$&$2^2\cdot3$&$O^{+}_8(2)$&$2^{12}\cdot3^5\cdot5^2\cdot7$&$6$\\
  $U_{4}(2)$&$2^6\cdot3^4\cdot5$&$2$&$^3$$D_4(2)$&$2^{12}\cdot3^4\cdot7^2\cdot13$&$3$\\
  $Sz(8)$&$2^6\cdot5\cdot7\cdot13$&$3$&$G_2(4)$&$2^{12}\cdot3^3\cdot5^2\cdot7\cdot13$&$2$\\
  $L_{2}(32)$&$2^5\cdot3\cdot11\cdot31$&$5$&$S_4(8)$&$2^{12}\cdot3^4\cdot5\cdot7^2\cdot13$&$6$\\
  $L_{2}(49)$&$2^4\cdot3\cdot5^2\cdot7^2$&$4$&$S_6(3)$&$2^{9}\cdot3^9\cdot5\cdot7\cdot13$&$2$\\
  $U_{3}(4)$&$2^6\cdot3\cdot5^2\cdot13$&$4$&$O_7(3)$&$2^{9}\cdot3^9\cdot5\cdot7\cdot13$&$2$\\
  $U_3(5)$&$2^4\cdot3^2\cdot5^3\cdot7$&$2\cdot3$&$L_6(2)$&$2^{15}\cdot3^4\cdot5\cdot7^2\cdot31$&$2$\\
  $A_9$&$2^6\cdot3^4\cdot5\cdot7$&$2$&$Th$&$2^{15}\cdot3^{10}\cdot5^3\cdot7^2\cdot13\cdot19\cdot31$&$1$\\
  $L_{2}(64)$&$2^6\cdot3^2\cdot5\cdot7\cdot13$&$6$&$L_2(19)$&$2^2\cdot3^2\cdot5\cdot19$&$2$\\
  \hline
  \end{longtabu}

Suppose that $K/H$ is isomorphic to one of the groups in Table \ref{tb:Th} except $ Th$, $L_{2}(19)$ and $U_{3}(8)$, then   $19\in\pi(H)$. Surely, $|H_{19}|=19$ and $H_{19}\unlhd G$.
Suppose  that $K/H$ is one of $ Th$, $L_{2}(19)$ and $U_{3}(8)$.  If $K/H\cong Th$, then  $G\cong S\cong Th$, as desired. For the remain cases, we have $13\in\pi(H)$,  $|H_{13}|=13$ and  $H_{13}\unlhd G$.  Therefore, we can get  contradictions to $t(G)=1$ by  $|G/C_G(H_{13})|\big|2^2\cdot3$ and  $|G/C_G(H_{19})|\big|2\cdot3^2$, respectively.

\textbf{Step 5} If $S= HN$, then $G\cong HN$.

By $|G|=|HN|=2^{14}\cdot3^6\cdot5^6\cdot7\cdot11\cdot19$, Lemma \ref{lemT} and
 \cite{ATLAS}, we have, $K/H$ is one of groups in Table \ref{tb:HN}.

\setlength{\LTleft}{0pt} \setlength{\LTright}{0pt} %表格与页面左右边缘之间的矩离均为０
\setlength\LTleft{0in}
\setlength\LTright{+1in plus 2 fill}
\setlength{\tabcolsep}{3pt}
\begin{longtabu}{p{1.5cm}p{3.5cm}c|p{1.5cm}p{4.7cm}c}
\caption{The non-abelian simple groups $K/H$ of orders dividing $|HN|$ }~\label{tb:HN}\\

\hline

  $K/H$       & $|K/H|$
  & $|\mathrm{Out}(K/H)|$& $K/H$       & $|K/H|$
  & $|\mathrm{Out}(K/H)|$    \\
 \hline
 \endfirsthead
 \multicolumn{4}{l}{Table \ref{tb:HN}: (continued) ~} \\
 \hline
 $K/H$       & $|K/H|$
 & $|\mathrm{Out}(K/H)|$ & $K/H$       & $|K/H|$
  & $|\mathrm{Out}(K/H)|$\\
 \hline
 \endhead
  \hline
  $A_5$& $2^{2}\cdot3\cdot5$ & $2$&$A_9$&$2^6\cdot3^4\cdot5\cdot7$&$2$\\
  $L_2(7)$&$2^3\cdot3\cdot7$&$2$&$M_{22}$&$2^7\cdot3^2\cdot5\cdot7\cdot11$&$2$\\
  $A_6$&$2^{3}\cdot3^2\cdot5$&$4$&$J_2$&$2^7\cdot3^3\cdot5^2\cdot7$&$2$\\
  $L_2(8)$&$2^{3}\cdot3^2\cdot7$&$3$&$S_6(2)$&$2^9\cdot3^4\cdot5\cdot7$&$1$\\
  $L_2(11)$&$2^{2}\cdot3\cdot5\cdot11$&$2$&$A_{10}$&$2^7\cdot3^4\cdot5^2\cdot7$&$2$\\
  $A_7$&$2^{3}\cdot3^2\cdot5\cdot7$&$2$&$U_4(3)$&$2^7\cdot3^6\cdot5\cdot7$&$2^3$\\
  $L_2(19)$&$2^{2}\cdot3^2\cdot5\cdot19$&$2$&$U_3(8)$&$2^9\cdot3^4\cdot7\cdot19$&$18$\\
  $U_3(3)$&$2^{5}\cdot3^3\cdot7$&$2$&$U_5(2)$&$2^{10}\cdot3^5\cdot5\cdot11$&$2$\\
  $M_{11}$&$2^4\cdot3^2\cdot5\cdot11$&$1$&$A_{11}$&$2^7\cdot3^4\cdot5^2\cdot7\cdot11$&$2$\\
  $A_8$&$2^{6}\cdot3^2\cdot5\cdot7$&$2$&$HS$&$2^7\cdot3^4\cdot5^2\cdot7\cdot11$&$2$\\ $L_3(4)$&$2^{6}\cdot3^2\cdot5\cdot7$&$2^2\cdot3$&$M^cL$&$2^9\cdot3^2\cdot5^3\cdot7\cdot11$&$2$\\
  $U_{4}(2)$&$2^6\cdot3^4\cdot5$&$2$&$O^{+}_8(2)$&$2^{12}\cdot3^5\cdot5^2\cdot7$&$6$\\
  $M_{12}$&$2^{6}\cdot3^3\cdot5\cdot11$&$2$&$A_{12}$&$2^9\cdot3^5\cdot5^2\cdot7\cdot11$&$2$\\
  $U_3(5)$&$2^4\cdot3^2\cdot5^3\cdot7$&$2\cdot3$&$M^cL$&$2^7\cdot3^6\cdot5^3\cdot7\cdot11$&$2$\\
  $J_1$&$2^3\cdot3\cdot5\cdot7\cdot11\cdot19$&$1$&$HN$&$2^{14}\cdot3^6\cdot5^6\cdot7\cdot11\cdot19$&$2$\\

  \hline
  \end{longtabu}

Suppose that $K/H$ is isomorphic to one of the groups in Table \ref{tb:HN} except $ HN$, $L_{2}(19)$, $J_1$ and $U_{3}(8)$, then   $19\in\pi(H)$. Surely, $|H_{19}|=19$ and $H_{19}\unlhd G$. Further,  we deduce   contradictions to $t(G)=1$ since $|G/C_G(H_{19})|\big|2\cdot3^2$.

Suppose  that   $K/H$ is isomorphic to one of $ HN$, $L_{2}(19)$, $J_1$ and $U_{3}(8)$.  If $K/H\cong HN$, then  $G\cong S\cong HN$, as desired. For the remain cases, we can argue as follows.

 Assume that  $K/H\cong L_{2}(19)$ or $U_{3}(8)$. Since \begin{center} $|L_{2}(19)|=2^2\cdot3^2\cdot5\cdot19$, $|\rm{Out}(L_{2}(19))|=2$;
  $|U_{3}(8)|=2^9\cdot3^4\cdot7\cdot19$, $|\rm{Out}(U_{3}(8))|=2\cdot3^2$.
 \end{center}
 We conclude that $11\in\pi(H)$ and $|H_{11}|=11$, also $H_{11}\unlhd G$. Further,  we deduce   contradictions to  $t(G)=1$ as $|G/C_G(H_{11})|\big|2\cdot5$. Now assume that  $K/H\cong J_1$, notice that $|J_1|=2^3\cdot3\cdot5\cdot7\cdot11\cdot19$, $|\rm{Out}(J_1)|=2$.  We see that $5\in\pi(H)$ and $|H_5|=5^5$,  $H_5\unlhd G$. Since $|G/C_G(H_5)|\big||GL(5,5)|$ and $|GL(5,5)|=2^{13}\cdot3^2\cdot5^{10}\cdot11\cdot13\cdot31\cdot37$, we deduce that $\{2,3,7,19\}\subseteq\pi(C_G(H_5))$. In view of the fact that  $\Gamma(G)$ is disconnected, we see  that $11\not\in\pi(C_G(H_5))$. Considering the action of a $11-$ element $g$ on $C_G(H_5)$, we obtain that $C_G(H_5)$ has a $\left\langle g \right\rangle-$invariant ${Sylow}\ 19-$subgroup, so $11\cdot19\in\pi_e(G)$, which implies that $t(G)=1$, a contradiction.

\textbf{Step 6} If $S= He$, then $G\cong He$.

In this case,  $|G|=|He|=2^{10}\cdot3^3\cdot5^2\cdot7^3\cdot17$.
By \cite{ATLAS} and Lemma \ref{lemT}, we get that $K/H$ is a group in Table \ref{tb:He}.

 \begin{table}[h]
\setlength{\abovecaptionskip}{10pt}
\setlength{\belowcaptionskip}{0pt}
\centering
\caption{The non-abelian simple groups $K/H$ of orders dividing $|He|$~}\label{tb:He}
\setlength{\tabcolsep}{4mm}{
\begin{tabular}{llc|llc}
 \hline
   $K/H$       & $|K/H|$
  & $|\mathrm{Out}(K/H)|$& $K/H$       & $|K/H|$
  & $|\mathrm{Out}(K/H)|$      \\
  \hline
  $A_5$& $2^{2}\cdot3\cdot5$ & $2$&$U_3(3)$&$2^{5}\cdot3^3\cdot7$&$2$\\
  $L_2(7)$&$2^3\cdot3\cdot7$&$2$&$A_8$&$2^{6}\cdot3^2\cdot5\cdot7$&$2$\\
  $A_6$&$2^{3}\cdot3^2\cdot5$&$4$&$L_3(4)$&$2^{6}\cdot3^2\cdot5\cdot7$&$2^2\cdot3$\\
  $L_2(8)$&$2^{3}\cdot3^2\cdot7$&$3$&$L_2(49)$&$2^4\cdot3\cdot5^2\cdot7^2$&$2^2$\\
  $L_2(17)$&$2^{4}\cdot3^2\cdot17$&$2$&$J_2$&$2^7\cdot3^3\cdot5^2\cdot7$&$2$\\
  $A_7$&$2^{3}\cdot3^2\cdot5\cdot7$&$2$&$S_4(4)$&$2^8\cdot3^2\cdot5^2\cdot17$&$2^2$\\
  $L_2(16)$&$2^{4}\cdot3^5\cdot17$&$4$&$He$&$2^{10}\cdot3^3\cdot5^2\cdot7^3\cdot17$&$2$\\
  \hline
  \end{tabular}}\end{table}

Suppose that $K/H$ is isomorphic to one of the groups in Table \ref{tb:He} except $He$, $L_{2}(17)$, $L_{2}(16)$, $S_4(4)$, then  $17\in\pi(H)$. Clearly,   $|H_{17}|=17$ and $H_{17}\unlhd G$. Hence, we get  contradictions to $t(G)=1$ by $|G/C_G(H_{17})|\big|2^4$.

Suppose  that  $K/H\cong He$, $L_{2}(17)$, $L_{2}(16)$, $S_4(4)$.  If $K/H\cong He$, then  $G\cong S\cong He$, as desired. For the remaining cases, we can argue as follows.

  Observe that   $|L_{2}(17)|=2^4\cdot3^2\cdot17$, $|\rm{Out}(L_{2}(17))=2$;
  $|L_{2}(16)|=2^4\cdot3\cdot5\cdot17$, $|\rm{Out}(L_{2}(16))|=4$;
  $|S_{4}(4)|=2^8\cdot3^2\cdot5^2\cdot17$, $|\rm{Out}(S_{4}(4))=4$.
We deduce that $7\in\pi(H)$.  Clearly $|H_{7}|=7^3$  also $H_{7}\unlhd G$. Since $|G/C_G(H_{7})|\big||GL(3,7)|$ and $|GL(3,7)|=2^6\cdot3^4\cdot7^3\cdot19$, we conclude that $\{2,5,17\}\subseteq\pi(C_G(H_{7}))$. Because $t(G)>1$, we have $3\not\in\pi(C_G(H_{7}))$.  Now considering the action of  an element $g$ of order $3$ on $C_G(H_{7})$  and using  Lemma \ref{ty4} and \ref{ty5}, we deduce that $C_G(H_{7})$ has a $\left\langle g \right\rangle-$invariant $Sylow \ 17-$subgroup, therefore, $3\cdot17\in\pi_e(G)$,  which implies that $t(G)=1$, a contradiction.

\textbf{Step 7} If  $S= Ru$, then $S\cong Ru$.

By $|G|=|Ru |=2^{14}\cdot3^{3}\cdot5^3\cdot7\cdot13\cdot29$, Lemma \ref{lemT} and \cite{ATLAS}, we get that $K/H$ is a group in Table \ref{tb:Ru}.

 \begin{table}[H]
\setlength{\abovecaptionskip}{10pt}
\setlength{\belowcaptionskip}{0pt}
\centering
\caption{The non-abelian simple groups $K/H$ of  orders dividing $|Ru|$~}\label{tb:Ru}
\setlength{\tabcolsep}{4mm}{
\begin{tabular}{lll|lll}
 \hline
   $K/H$       & $|K/H|$
  & $|\mathrm{Out}(K/H)|$& $K/H$       & $|K/H|$
  & $|\mathrm{Out}(K/H)|$      \\
  \hline
  $A_5$&$2^{2}\cdot3\cdot5$&$2$&$L_2(29)$&$2^2\cdot3\cdot5\cdot7\cdot29$&$2$\\
  $L_2(7)$&$2^3\cdot3\cdot7$&$2$&$A_8$&$2^{6}\cdot3^2\cdot5\cdot7$&$2$\\
  $A_6$&$2^{3}\cdot3^2\cdot5$&$4$&$L_3(4)$&$2^{6}\cdot3^2\cdot5\cdot7$&$2^2\cdot3$\\
  $L_2(8)$&$2^{3}\cdot3^2\cdot7$&$3$&$S_z(8)$&$2^6\cdot5\cdot7\cdot13$&$3$\\
  $L_2(13)$&$2^{2}\cdot3\cdot7\cdot13$&$2$&$U_{3}(4)$&$2^6\cdot3\cdot5^2\cdot13$&$4$\\
  $A_7$&$2^{3}\cdot3^2\cdot5\cdot7$&$2$&$U_{3}(5)$&$2^4\cdot3^2\cdot5^3\cdot7$&$6$\\
  $L_3(3)$&$2^{4}\cdot3^3\cdot13$&$2$&$L_2(64)$&$2^6\cdot3^2\cdot5\cdot7\cdot13$&$6$\\
  $U_3(3)$&$2^{5}\cdot3^3\cdot7$&$2$&$J_2$&$2^7\cdot3^3\cdot5^2\cdot7$&$2$\\
  $L_2(25)$&$2^3\cdot3\cdot5^2\cdot13$&$4$&$^2$$F_4(2)^{\prime}$&$2^{11}\cdot3^3\cdot5^2\cdot13$&$2$\\
  $L_2(27)$&$2^2\cdot3^3\cdot7\cdot13$&$6$&$G_2(4)$&$2^{12}\cdot3^2\cdot5^2\cdot7\cdot13$&$2$\\
  $Ru$&$2^{14}\cdot3^3\cdot5^3\cdot7\cdot13\cdot29$&$1$\\
  \hline
  \end{tabular}}\end{table}

Suppose that $K/H$ is isomorphic to one of the groups in Table \ref{tb:Ru} except $Ru$ and $L_{2}(29)$, then   $29\in\pi(H)$,  $|H_{29}|=29$ and $H_{29}\unlhd G$. Since $|G/C_G(H_{29})|\big|2^2\cdot7$, it follows that $\{2,3,5,13\}\subseteq\pi(C_G(H_{29}))$. Because $t(G)>1$, we have $7\not\in\pi(C_G(H_{29}))$. Viewing  the action of an element $g$ of order $7$ on $C_G(H_{29})$ and using   Lemma \ref{ty4} and \ref{ty5}, we deduce  that $C_G(H_{29})$ has a $\left\langle g \right\rangle-$invariant $Sylow \ 13-$subgroup. Hence, $7\cdot13\in\pi_e(G)$, so that $t(G)=1$, a contradiction.

Suppose  that    $K/H\cong Ru$ or $L_{2}(29)$.  If $K/H\cong Ru$, then  $G\cong S\cong Ru$, we are done.
Assume that $K/H\cong$$L_{2}(29)$. Because $|L_{2}(29)|=2^2\cdot3\cdot5\cdot7\cdot29$, $|\rm{Out}(L_{2}(29))|=2$,
 we have that $13\in\pi(H)$ and $|H_{13}|=13$, and $H_{13}\unlhd G$. Consequently, we deduce  a contradiction to  $t(G)=1$ from   $|G/C_G(H_{13})|\big|2^2\cdot3$.

\textbf{Step 8} If $S= HS$, then $G\cong HS$.

In this case, $|G|=|HS |=2^{9}\cdot3^2\cdot5^3\cdot7\cdot11$.
Applying Lemma \ref{lemT}, we get  from \cite{ATLAS} that $K/H$ is one of the groups in Table \ref{tb:HS}.

 \begin{table}[H]
\setlength{\abovecaptionskip}{10pt}
\setlength{\belowcaptionskip}{0pt}
\centering
\caption{the non-abelian simple groups $K/H$ of  orders dividing $|HS|$~}\label{tb:HS}
\setlength{\tabcolsep}{4mm}{
\begin{tabular}{lll|lll}
 \hline
   $K/H$       & $|K/H|$
  & $|\mathrm{Out}(K/H)|$& $K/H$       & $|K/H|$
  & $|\mathrm{Out}(K/H)|$      \\
  \hline
  $A_5$& $2^{2}\cdot3\cdot5$ & $2$&$M_{11}$&$2^4\cdot3^2\cdot5\cdot11$&$1$\\
  $L_2(7)$&$2^3\cdot3\cdot7$&$2$&$A_8$&$2^{6}\cdot3^2\cdot5\cdot7$&$2$\\
  $A_6$&$2^{3}\cdot3^2\cdot5$&$4$&$L_3(4)$&$2^{6}\cdot3^2\cdot5\cdot7$&$2^2\cdot3$\\
  $L_2(8)$&$2^{3}\cdot3^2\cdot7$&$3$&$U_3(5)$&$2^4\cdot3^2\cdot5^3\cdot7$&$2\cdot3$\\
  $L_2(11)$&$2^{2}\cdot3\cdot5\cdot11$&$2$&$M_{22}$&$2^7\cdot3^2\cdot5\cdot7\cdot11$&$2$\\
  $A_7$&$2^{3}\cdot3^2\cdot5\cdot7$&$2$&$HS$&$2^9\cdot3^2\cdot5^3\cdot7\cdot11$&$2$\\
  \hline
  \end{tabular}}\end{table}

Suppose that $K/H$ is isomorphic to one of the groups in Table \ref{tb:HS} except $ HS$, $M_{22}$, $M_{11}$ and $L_2(11)$, then    $11\in\pi(H)$ and    $|H_{11}|=11$. Since $H_{11}\unlhd G$, we have $|G/C_G(H_{11})|\big|2\cdot5$, which implies that  $t(G)=1$, a contradiction.

Suppose that $K/H$ is isomorphic to one of $M_{22}$, $M_{11}$  and  $L_{2}(11)$, then $5\in\pi(H)$ and $|H_{5}|=5^2$. Since $H_{5}\unlhd G$, we have   $|G/C_G(H_{5})|\big||GL(2,5)|$. Notice $|GL(2,5)|=2^5\cdot3\cdot5$, we come to $\pi(C_G(H_{5}))=\pi(G)$,  a contradiction to $t(G)=1$.

Therefore  $K/H\cong HS$, then  $G\cong S\cong HS$, as desired.

\textbf{Step 9} If  $S= Suz$, then $G\cong Suz$.

In this case,  $|G|=|Suz|=2^{13}\cdot3^7\cdot5^2\cdot7\cdot11\cdot13$. By the same reason, we get that $K/H$ is isomorphic to a group in Table \ref{tb:Suz}.

\begin{table}[h]
\setlength{\abovecaptionskip}{10pt}
\setlength{\belowcaptionskip}{0pt}
\centering
\caption{The non-abelian simple groups $K/H$ of orders dividing $|Suz|$ }~\label{tb:Suz}
\setlength{\tabcolsep}{4mm}{
\begin{tabular}{lll|lll}
 \hline
   $K/H$       & $|K/H|$
  & $|\mathrm{Out}(K/H)|$& $K/H$       & $|K/H|$
  & $|\mathrm{Out}(K/H)|$      \\
  \hline
  $A_5$&$2^{2}\cdot3\cdot5$&$2$&$M_{22}$&$2^7\cdot3^2\cdot5\cdot7\cdot11$&$2$\\
  $A_6$&$2^{3}\cdot3^2\cdot5$&$2^2$&$U_{5}(2)$&$2^{10}\cdot3^5\cdot5\cdot11$&$2$\\
  $U_4(2)$&$2^6\cdot3^4\cdot5$&$2$&$A_{11}$&$2^7\cdot3^4\cdot5^2\cdot7\cdot11$&$2$\\
  $L_2(7)$&$2^3\cdot3\cdot7$&$2$&$A_{12}$&$2^9\cdot3^5\cdot5^2\cdot7\cdot11$&$2$\\
  $L_2(8)$&$2^{3}\cdot3^2\cdot7$&$3$&$L_2(13)$&$2^2\cdot3\cdot7\cdot13$&$2$\\
  $A_7$&$2^{3}\cdot3^2\cdot5\cdot7$&$2$&$L_3(3)$&$2^{4}\cdot3^3\cdot13$&$2$\\ $U_3(3)$&$2^{5}\cdot3^3\cdot7$&$2$&$L_2(25)$&$2^{3}\cdot3\cdot5^2\cdot13$&$2^2$\\
  $A_8$&$2^6\cdot3^2\cdot5\cdot7$&$2$&$L_2(27)$&$2^2\cdot3^3\cdot7\cdot13$&$2\cdot3$\\
  $L_3(4)$&$2^6\cdot3^2\cdot5\cdot7$&$2^2\cdot3$&$Sz(8)$&$2^6\cdot5\cdot7\cdot13$&$3$\\
  $A_9$&$2^6\cdot3^4\cdot5\cdot7$&$2$&$U_3(4)$&$2^6\cdot3\cdot5^2\cdot13$&$2^2$\\
  $J_2$&$2^7\cdot3^3\cdot5^2\cdot7$&$2$&$L_2(64)$&$2^6\cdot3^2\cdot5\cdot7\cdot13$&$2\cdot3$\\        $S_6(2)$&$2^9\cdot3^4\cdot5\cdot7$&$1$&$G_2(3)$&$2^6\cdot3^6\cdot7\cdot13$&$2$\\
  $A_{10}$&$2^7\cdot3^4\cdot5^2\cdot7$&$2$&$L_4(3)$&$2^{6}\cdot3^6\cdot5\cdot13$&$2^2$\\
  $U_4(3)$&$2^{7}\cdot3^6\cdot5\cdot7$&$2^3$&$^2F_4(2)^{\prime}$&$2^{11}\cdot3^3\cdot5^2\cdot13$&$2$\\
  $O_8^{+}(2)$&$2^{12}\cdot3^5\cdot5^2\cdot7$&$2\cdot3$&$L_3(9)$&$2^7\cdot3^6\cdot5\cdot7\cdot13$&$2^2$\\                                      $L_2(11)$&$2^2\cdot3\cdot5\cdot11$&$2$&$G_2(4)$&$2^{12}\cdot3^3\cdot5^2\cdot7\cdot13$&$2$\\
  $M_{11}$&$2^4\cdot3^2\cdot5\cdot11$&$1$&$A_{13}$&$2^{9}\cdot3^5\cdot5^2\cdot7\cdot11\cdot13$&$2$\\
  $M_{12}$&$2^{6}\cdot3^3\cdot5\cdot11$&$2$&$Suz$&$2^{13}\cdot3^7\cdot5^2\cdot7\cdot11\cdot13$&$2$\\
  \hline
  \end{tabular}}\end{table}

Suppose that $K/H$ is isomorphic to one of the groups in Table \ref{tb:Suz} except $L_2(13)$, $L_{3}(3)$, $L_{2}(25)$, $L_{2}(27)$, $Sz(8)$, $U_3(4)$, $L_2(64)$,  $G_2(3)$, $L_4(3)$, $^2F_4(2)^{\prime}$, $L_{3}(9)$, $G_2(4)$, $A_{13}$ and $Suz$, then $13\in\pi(H)$ , $|H_{13}|=13$ and $H_{13}\unlhd G$. Consequently, we get   contradictions to $t(G)=1$ by  $|G/C_G(H_{13})|\big|2^2\cdot3$.

Suppose that $K/H$ is isomorphic to one of $L_2(13)$, $L_{3}(3)$, $L_{2}(25)$, $L_{2}(27)$, $Sz(8)$, $U_3(4)$, $L_2(64)$,  $G_2(3)$, $L_4(3)$, $^2F_4(2)^{\prime}$, $L_{3}(9)$, $G_2(4)$,  then $11\in\pi(H)$, $|H_{11}|=11$, and $H_{11}\unlhd G$. Hence, $|G/C_G(H_{11})|\big|2\cdot5$, which implies  $t(G)=1$, a  contradictions.

Suppose that $K/H\cong A_{13}$,  then $3\in\pi(H)$ and $|H_{3}|=3^2$. Since $H_{3}\unlhd G$, similarly we get contradictions to $t(G)=1$ from $|G/C_G(H_{3})|\big|2^4\cdot3$.

At last $K/H\cong Suz$, then $G\cong S\cong Suz$, as desired. The lemma holds by Step 1-9.\end{proof}

\subsection{Proof of the Main Theorem}
\begin{proof} The Main Theorem follows from Lemma 2.8-2.13 and  \ref{le1}$-$\ref{le6}.
\end{proof}

\end{document}